\theoremstyle{plain}
\def\Box{\vcenter{\vbox{\hrule\hbox{\vrule
     \vbox to 8.8pt{\hbox to 10pt{}\vfill}\vrule}\hrule}}}
\newtheorem{theorem}{Theorem}[section]
\newtheorem{corollary}[theorem]{Corollary}
\newtheorem{definition}[theorem]{Definition}
\newtheorem{example}[theorem]{Example}
\newtheorem{lemma}[theorem]{Lemma}
\begin{document}

%

\title{A new generalized inverse of matrices from core-EP decomposition
}

\author{Kezheng Zuo$^{1}$, Yu Li$^{2}$, Gaojun Luo$^{3}$}

\maketitle

\let\thefootnote\relax\footnotetext{$^1$Department of Mathematics, Hubei Normal
University, Hubei, Huangshi, China. (Email: xiangzuo28@163.com)}

\let\thefootnote\relax\footnotetext{$^2$Department of Mathematics, Hubei Normal
University, Hubei, Huangshi, China. (Email: 335972971@qq.com)}

\let\thefootnote\relax\footnotetext{$^3$Department of Mathematics, Nanjing University of Aeronautics and Astronautics, Nanjing 211100, China. (Email: gjluo1990@163.com)}

\let\thefootnote\relax\footnotetext {This work was supported by NSFC of China (Grant No. 11961076). }

\begin{abstract}
A new generalized inverse for a square matrix $H\in\mathbb{C}^{n\times n}$, called CCE-inverse, is established by the core-EP decomposition and Moore-Penrose inverse $H^{\dag}$. We propose some characterizations of the CCE-inverse. Furthermore, two canonical forms of the CCE-inverse are presented. At last, we introduce the definitions of CCE-matrices and $k$-CCE matrices, and prove that CCE-matrices are the same as $i$-EP matrices studied by Wang and Liu in [The weak group matrix, Aequationes Mathematicae, 93(6): 1261-1273, 2019].\\
{\bf AMS classification}: 15A09; 15A03.\\
{\bf Keywords}: CCE-inverse; Moore-Penrose inverse; Core-EP decomposition; Core-EP inverse; EP-matrix

\end{abstract}

\section{Introduction}\numberwithin{equation}{section}

Generalized inverses of matrices are closely associated with orthonormalization, linear equations, singular values, least squares solutions and various matrix factorizations. Over the past few decades, there has been an increasing interest in the study of generalized inverses due to their wide utilization in many fields such as statistics \cite{A1}, neural network \cite{A2}, compressed sensing \cite{A3} and so on.

Let $\mathbb{C}^{m\times n}$ stand for the set of $m\times n$ complex matrices. Denote the range space, null space, conjugate transpose and rank of $H\in\mathbb{C}^{m\times n}$ by $\mathcal{R}(H)$, $\mathcal{N}(H)$, $H^{*}$ and $r(H)$, respectively. Moreover $I_n$ will be the identity matrix of
order $n$. For a matrix $H\in\mathbb{C}^{n\times n}$, the index of $H$ is said to be the smallest integer $k>0$ such that $r(H^{k})=r(H^{k+1})$ and written ${\rm Ind}(H)$. Let $\mathbb{C}^{n\times n}_{k}$ be the set consisting of $n\times n$ complex matrices with index $k$. For a matrix $H\in\mathbb{C}^{m\times n}$, if there exists a unique matrix $X\in\mathbb{C}^{n\times m}$ satisfying
$$
(1) \ HXH=H, \ \ \ (2) \ XHX=X, \ \ \ (3) \ (HX)^{*}=HX, \ \ \ (4) \ (XH)^{*}=XH,
$$
then $X=H^{\dag}$ is termed Moore-Penrose (MP for short) inverse of $H$ \cite{P}. Furthermore, a matrix $X\in\mathbb{C}^{n\times m}$ satisfying the condition (1) is called a $g$-inverse of $H$. A matrix $X\in\mathbb{C}^{n\times m}$ is said to be an outer inverse of $H$ if $XHX=X$ and is denoted by $H^{(2)}$. If there exists a matrix $X\in\mathbb{C}^{n\times m}$ that satisfies the conditions (1) and (2), then it is referred to as a reflexive g-inverse of $H$.  A matrix $X$ satisfying $X=XHX$, $\mathcal{R}(H)=\mathcal{T}$ and $\mathcal{N}(H)=\mathcal{S}$ is denoted by $A^{(2)}_{\mathcal{T},\mathcal{S}}$. Basing on the MP-inverse, one can obtain the orthogonal projection onto $\mathcal{R}(H)$, which is represented by $P_H=HH^{\dag}$.

The Drazin inverse $H^{D}\in\mathbb{C}^{n\times n}$ of $H\in\mathbb{C}^{n\times n}_{k}$ is the unique solution of the following equations \cite{D0}:
$$
HH^{D}H=H, \ \ \  HH^{D}=H^{D}H, \ \ \ H^{D}H^{k+1}=H^{k}.
$$
Especially, the Drazin inverse of $H$ is reduced to the group inverse of $H$ which is denoted by $H^{\#}$ if ${\rm Ind}(H)=1$. In general, nonsingular matrices and matrices with index $1$ have nice algebraic structure. Assume that $\mathbb{C}^{\mathrm{CM}}_{n}$ is the set consisting of matrices with order $n$ and index less than or equal to $1$. For any matrix $H\in\mathbb{C}^{\mathrm{CM}}_{n}$, the authors \cite{Bt2} established a new generalized inverse, called core inverse, which is the unique solution of
$$
HX=P_{H},\ \ \  \mathcal{R}(X)\subseteq\mathcal{R}(H).
$$
Denote by $H^{\textcircled{\#}}$ the core inverse of $H\in\mathbb{C}^{\mathrm{CM}}_{n}$. The core inverse has been an important concept in the study of the generalized inverses (see \cite{Bt2,K,LZZ,RD}). Moreover, several generalizations of the core inverse, i.e., core-EP inverse, DMP-inverse, BT-inverse, $(B,C)$-inverse, weak group inverse and CMP-inverse, were provided by the unique solution of several equations as follows.

For a matrix $H\in\mathbb{C}^{n\times n}_{k}$, the unique solution $H^{\textcircled{\emph{\dag}}}\in\mathbb{C}^{n\times n}$ of the equations
$$
H^{\textcircled{\emph{\dag}}}HH^{\textcircled{\emph{\dag}}}=H^{\textcircled{\emph{\dag}}},\ \ \  \mathcal{R}(H^{\textcircled{\emph{\dag}}})=\mathcal{R}((H^{\textcircled{\emph{\dag}}})^{*})=\mathcal{R}(H^{k}),
$$
is called the core-EP inverse of $H$ ([see \cite{PM,flt2,w,ZC}).

The DMP-inverse \cite{MT,ZDC} $H^{D,\dag}$ of $H\in\mathbb{C}^{n\times n}_{k}$ is defined as
$$
H^{D,\dag}HH^{D,\dag}=H^{D,\dag}, \ \ \  H^{D,\dag}H=H^{D}H, \ \ \ H^{k}H^{D,\dag}=H^{k}H^{\dag}.
$$
Moreover, it was demonstrated that $H^{D,\dag}=H^{D}HH^{\dag}$. Also, the dual DMP-inverse of $H$ is defined to be the matrix $H^{\dag,D}=H^{\dag}HH^{D}$ \cite{MT}.

Baksalary et al. \cite{Bt1} introduced the BT-inverse $H^{\diamondsuit}$ of $H\in\mathbb{C}^{n\times n}$, which is defined by
$$
H^{\diamondsuit}=(H^{2}H^{\dag})^{\dag}=(HP_{H})^{\dag}.
$$

The $(B,C)$-inverse \cite{BBJ,D} of $H\in\mathbb{C}^{m\times n}$, denoted by $H^{(B,C)}$, is the unique matrix $X\in\mathbb{C}^{n\times m}$ such that
$$
   CHX=C, \ \ \  XHB=B, \ \ \  \mathcal{R}(X)=\mathcal{R}(B), \ \ \ \mathcal{N}(X)=\mathcal{N}(C),
$$
where $B, C\in\mathbb{C}^{n\times m}$.

In 2018, Wang and Chen \cite{wc} defined the weak group inverse $H^{\textcircled{w}}$ of $H\in\mathbb{C}^{n\times n}_{k}$ by
$$
(H^{\textcircled{w}})^{2}=H^{\textcircled{w}}, \ \ \ HH^{\textcircled{w}}=H^{\textcircled{\dag}}H.
$$

Very recently, Mehdipour and Salem \cite{MS} proposed a new generalized inverse, termed the CMP-inverse of $H\in\mathbb{C}^{n\times n}_{k}$, written as $H^{C,\dag}$, which is defined by
$$
H^{C,\dag}HH^{C,\dag}=H^{C,\dag}, \ \ \  HH^{C,\dag}H=\widetilde{H}_{1}, \ \ \  HH^{C,\dag}=\widetilde{H}_{1}H^{\dag}, \ \ \ H^{C,\dag}H=H^{\dag}\widetilde{H}_{1},
$$
where the matrix $\widetilde{H}_{1}$ is the core part of core-nilpotent decomposition of $H$ \big(in fact, $\widetilde{H}_{1}=HH^{D}H$, see \cite{MS,XCM}\big).

Note that the CMP-inverse is derived from the core-nilpotent decomposition and MP-inverse. Wang \cite{w} proposed a new decomposition of $H\in\mathbb{C}^{n\times n}_k$, which is referred to as core-EP (CEP for simplicity) decomposition. It is natural to consider the CEP-decomposition in the definition of the CMP-inverse. As a result, we get a new generalized inverse called the CCE-inverse. In this work, we introduce the CCE-inverse $H^{C,\textcircled{\dag}}$ for square matrices of an arbitrary index using its core part $H_{1}$ from the CEP-decomposition of $H$ and its MP-inverse $H^{\dag}$. Using the CEP-decomposition of $H$, we derive some characterizations of the CCE-inverse of $H$. Meanwhile, we introduce CCE-matrices and $k$-CCE matrices and show that these two kinds of matrices are consistent. Finally, we study the relationships between CCE-matrices and $k$-CCE matrices with some special matrix classes.

The rest of the material is organized as follows. In Section 2, a new generalized inverse is established and we study its characterizations. In Section 3, two canonical forms of CCE-inverse and their applications are presented. In Section 4, we investigate CCE-matrices and $k$-CCE matrices.

\section{A new generalized inverse on core-EP decomposition}

In this section, we propose a new generalized inverse of $H$ by the CEP-decomposition and MP-inverse. We begin with the CEP-decomposition.

Wang \cite{w} introduced the CEP-decomposition of $H\in\mathbb{C}^{n\times n}_k$, which says that a matrix $H\in\mathbb{C}^{n\times n}_k$ can be represented in the following form:
\begin{equation}\label{eq2.5}
H=H_{1}+H_{2}=U \left[\begin{array}{cc}
T & P \\
0 & Q \\
\end{array}
\right] U^{*},\ \ \ H_{1}=U \left[\begin{array}{cc}
T & P \\
0 & 0 \\
\end{array}
\right] U^{*},\ \ \ H_{2}=U \left[\begin{array}{cc}
0 & 0 \\
0 & Q \\
\end{array}
\right] U^{*},
\end{equation}
where $T$ is nonsingular with $r(T)=r(H^{k})$ and $Q$ is nilpotent with index $k$ and $U\in\mathbb{C}^{n\times n}$ is unitary. The expression of $H$ provided by $(\ref{eq2.5})$ is unique and satisfies ${\rm Ind}(H_{1})\leq1$, $H_{2}^{k}=0$ and $H_{1}^{*}H_{2}=H_{2}H_{1}=0$ \cite[Theorem 2.1]{w}. In $(\ref{eq2.5})$, $H_{1}$ and $H_{2}$ are termed the core part and nilpotent part of $H$, respectively. In addition, it is easy to verify that $H_{1}=HH^{\textcircled{\dag}}H$. For simplicity, we always assume that the matrix $H\in\mathbb{C}^{n\times n}_{k}$ has the CEP-decomposition $H=H_{1}+H_{2}$ throughout this section.

\begin{definition}
Suppose that $H\in\mathbb{C}^{n\times n}_{k}$. Then the CCE-inverse of $H$ is defined by $H^{C,\textcircled{\dag}}=H^{\dag}H_{1}H^{\dag}=H^{\dag}HH^{\textcircled{\dag}}HH^{\dag}=Q_{H}H^{\textcircled{\dag}}P_{H}$,
where $P_{H}=HH^{\dag}$ and $Q_{H}=H^{\dag}H$.
\end{definition}

\begin{theorem}\label{thm3.1}
Assume that $H\in\mathbb{C}^{n\times n}_{k}$. Then $X=H^{C,\textcircled{\dag}}$ is the unique solution of the following three equations
\begin{equation}\label{eq3.1}
XHX=X, \ \ \ HX=H_{1}H^{\dag}, \ \ \ XH=H^{\dag}H_{1}.
\end{equation}
\end{theorem}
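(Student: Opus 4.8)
The plan is to prove existence (that $X:=H^{C,\textcircled{\dag}}$ solves the system) and uniqueness separately, working throughout with the representation $X=Q_H H^{\textcircled{\dag}}P_H$ from the definition, since it combines most smoothly with the elementary identities $P_H H=H$ and $HQ_H=H$ (each being $HH^\dag H$ by the first Penrose equation), with the relation $H_1=HH^{\textcircled{\dag}}H$ recorded after~$(\ref{eq2.5})$, and with the core-EP relation $H^{\textcircled{\dag}}HH^{\textcircled{\dag}}=H^{\textcircled{\dag}}$.

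For existence, the two ``mixed'' equations are immediate: $HX=(HQ_H)H^{\textcircled{\dag}}P_H=HH^{\textcircled{\dag}}(HH^\dag)=(HH^{\textcircled{\dag}}H)H^\dag=H_1H^\dag$, and symmetrically $XH=Q_H H^{\textcircled{\dag}}(P_H H)=(H^\dag H)H^{\textcircled{\dag}}H=H^\dag(HH^{\textcircled{\dag}}H)=H^\dag H_1$. For the outer-inverse equation I would additionally use $\mathcal{R}(H_1)=\mathcal{R}(HH^{\textcircled{\dag}}H)\subseteq\mathcal{R}(H)$, which forces $P_H H_1=HH^\dag H_1=H_1$; then, invoking the identity $HX=H_1H^\dag$ just obtained,
$$XHX=X(HX)=Q_H H^{\textcircled{\dag}}(P_H H_1)H^\dag=Q_H H^{\textcircled{\dag}}H_1H^\dag=Q_H(H^{\textcircled{\dag}}HH^{\textcircled{\dag}})HH^\dag=Q_H H^{\textcircled{\dag}}HH^\dag=H^\dag(HH^{\textcircled{\dag}}H)H^\dag=X.$$

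For uniqueness, suppose $X_1,X_2$ both satisfy~$(\ref{eq3.1})$. Then $HX_1=H_1H^\dag=HX_2$ and $X_1H=H^\dag H_1=X_2H$, so $X_1=X_1HX_1=(X_1H)X_1=(X_2H)X_1=X_2(HX_1)=X_2(HX_2)=X_2HX_2=X_2$. I expect no genuine obstacle: the whole argument is a chain of substitutions resting on $H_1=HH^{\textcircled{\dag}}H$, the core-EP relation $H^{\textcircled{\dag}}HH^{\textcircled{\dag}}=H^{\textcircled{\dag}}$, and the projector identities for $P_H$ and $Q_H$. The only point needing a moment's care is the choice of representation of $X$ in the outer-inverse step---starting instead from $X=H^\dag H_1H^\dag$ one is led to establish $H_1H^\dag H_1=H_1$, which again follows at once from $H_1=HH^{\textcircled{\dag}}H$ and $H^{\textcircled{\dag}}HH^{\textcircled{\dag}}=H^{\textcircled{\dag}}$, so the two routes carry the same content. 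A purely computational proof via the unitary block form~$(\ref{eq2.5})$ is possible as well, but it requires an explicit expression for $H^\dag$ and is noticeably messier, so I would avoid it.
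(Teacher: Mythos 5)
Your proposal is correct and follows essentially the same elementary route as the paper: direct verification that $H^{C,\textcircled{\dag}}=H^{\dag}H_{1}H^{\dag}=Q_HH^{\textcircled{\dag}}P_H$ satisfies the three equations, using only $H_1=HH^{\textcircled{\dag}}H$, $H^{\textcircled{\dag}}HH^{\textcircled{\dag}}=H^{\textcircled{\dag}}$ and the Penrose identities. The only cosmetic difference is in uniqueness: the paper substitutes the equations into $X=XHX$ to recover the closed form $H^{\dag}H_{1}H^{\dag}$ for any solution, whereas you run the standard two-solution argument $X_1=X_1HX_1=X_2HX_2=X_2$; both carry the same content.
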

\begin{proof}
Obviously, $H^{C,\textcircled{\dag}}=H^{\dag}H_{1}H^{\dag}$ is a solution of this system. For uniqueness, suppose that $X$ satisfies $(\ref{eq3.1})$. Then
\begin{eqnarray*}
X&=&XHX\\
&=&X(H_{1}H^{\dag})\\
&=&X(HH^{\textcircled{\dag}}H)H^{\dag}\\
&=&(H^{\dag}H_{1})(H^{\textcircled{\dag}}HH^{\dag})\\
&=&H^{\dag}H_{1}H^{\dag}.
\end{eqnarray*}
Hence, $X=H^{C,\textcircled{\dag}}$ is the unique matrix such that $XHX=X, HX=H_{1}H^{\dag}, XH=H^{\dag}H_{1}$.
\end{proof}

In the following, we use a concrete sample to illustrate the difference between the CCE-inverse and other generalized inverses.
\begin{example}\label{ex1}
Let $H=\left[\begin{array}{cc}
I_{3} & I_{3} \\
0 & N \\
\end{array}
\right],$
where
$N=\left[\begin{array}{ccc}
0 & 1 & 0\\
0 & 0 & 1\\
0 & 0 & 0\\
\end{array}
\right]$. It is easy to check that $Ind(H)=3$. By a direct computation, we obtain the generalized inverses as follows:
$$
H^{\dag}=\left[\begin{array}{cc}
H_{1} & -N^{\dag} \\
I-H_{1} & N^{\dag} \\
\end{array}
\right], \;\
H^{D}=\left[\begin{array}{cc}
I_{3} & H_{2} \\
0 & 0 \\
\end{array}
\right],
$$

$$
H^{D,\dag}=H^{D}HH^{\dag}=\left[\begin{array}{cc}
I & H_{3} \\
0 & 0 \\
\end{array}
\right], \;\
H^{\dag,D}=H^{\dag}HH^{D}=\left[\begin{array}{cc}
H_{1} & H_{4} \\
I-H_{1} & H_{2}-H_{4} \\
\end{array}
\right],
$$

$$
H^{\diamond}=(H^{2}H^{\dag})^{\dag}=\left[\begin{array}{cc}
H_{5} & -H_{6} \\
I-H_{5} & H_{6} \\
\end{array}
\right], \;\
H^{\textcircled{\dag}}=\left[\begin{array}{cc}
I_{3} & 0 \\
0 & 0 \\
\end{array}
\right],
$$

$$
H^{C,\dag}=Q_{H}H^{D}P_{H}=\left[\begin{array}{cc}
H_{1} & H_{7} \\
I-H_{1} & H_{3}-H_{7} \\
\end{array}
\right], \;\
H^{\textcircled{w}}=\left[\begin{array}{cc}
I_{3} & I_{3} \\
0 & 0 \\
\end{array}
\right].
$$
It is easy to see that the CCE-inverse $H^{C,\textcircled{\dag}}$ is
$$H^{C,\textcircled{\dag}}=\left[\begin{array}{cc}
H_{1} & 0 \\
I-H_{1} & 0 \\
\end{array}
\right],$$
where $H_{1}=\left[\begin{array}{ccc}
\frac{1}{2} & 0& 0 \\
0 & 1& 0 \\
0 & 0& 1 \\
\end{array}
\right], \;\ H_{2}=\left[\begin{array}{ccc}
1 & 1& 1 \\
0 & 1& 1 \\
0 & 0& 1 \\
\end{array}
\right], \;\ H_{3}=\left[\begin{array}{ccc}
1 & 1& 0 \\
0 & 1& 0 \\
0 & 0& 0 \\
\end{array}
\right], \;\ H_{4}=\left[\begin{array}{ccc}
\frac{1}{2} & \frac{1}{2}& \frac{1}{2} \\
0 & 1& 1 \\
0 & 0& 1 \\
\end{array}
\right],$ \\ $H_{5}=\left[\begin{array}{ccc}
1 & 0& 0 \\
0 & \frac{1}{2}& 0 \\
0 & 0& 1 \\
\end{array}
\right], \;\ H_{6}=\left[\begin{array}{ccc}
0 & 0& 0\\
\frac{1}{2} & 0& 0 \\
0 & 0& 0 \\
\end{array}
\right], \;\ H_{7}=\left[\begin{array}{ccc}
\frac{1}{2} & \frac{1}{2}& 0 \\
0 & 1& 0 \\
0 & 0& 0 \\
\end{array}
\right]$ and \;\ $N^{\dag}=\left[\begin{array}{ccc}
0 & 0& 0 \\
1 & 0& 0 \\
0 & 1& 0 \\
\end{array}
\right].$
\end{example}

According to Example \ref{ex1}, the CCE-inverse is indeed a new generalized inverse. Now, we show that the CCE-inverse $H^{C,\textcircled{\dag}}$ of $H$ is an outer inverse of $H$ (i.e.,$H^{C,\textcircled{\dag}}HH^{C,\textcircled{\dag}}=H^{C,\textcircled{\dag}}$) and a reflexive g-inverse of $H_{1}$.

\begin{theorem}
Let $H\in\mathbb{C}^{n\times n}_{k}$ with CEP-decomposition $H=H_{1}+H_{2}$ defined by $(\ref{eq2.5})$. Then
\begin{itemize}
\item[(a)] $H^{C,\textcircled{\dag}}$ is an outer inverse of $H$;
\item[(b)] $H^{C,\textcircled{\dag}}$ is a reflexive g-inverse of $H_{1}$.
\end{itemize}
\end{theorem}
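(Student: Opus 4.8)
The plan is to derive both parts from the single identity $H_{1}H^{\dag}H_{1}=H_{1}$ — that is, from the fact that $H^{\dag}$ is a (inner) $g$-inverse of the core part $H_{1}$ — together with what has already been established. For part~(a) essentially nothing new is needed: the first of the three defining equations in Theorem~\ref{thm3.1} is $XHX=X$ for $X=H^{C,\textcircled{\dag}}$, which is by definition the statement that $H^{C,\textcircled{\dag}}$ is an outer inverse of $H$. If one prefers a self-contained computation, write $H^{C,\textcircled{\dag}}=H^{\dag}H_{1}H^{\dag}$, use $HH^{\dag}H=H$ together with $H_{1}=HH^{\textcircled{\dag}}H$ to get $HH^{C,\textcircled{\dag}}=H_{1}H^{\dag}$, and then $H^{C,\textcircled{\dag}}HH^{C,\textcircled{\dag}}=H^{\dag}H_{1}H^{\dag}\cdot H_{1}H^{\dag}=H^{\dag}(H_{1}H^{\dag}H_{1})H^{\dag}=H^{\dag}H_{1}H^{\dag}=H^{C,\textcircled{\dag}}$.

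For part~(b) I would check the two required equations for $X=H^{C,\textcircled{\dag}}=H^{\dag}H_{1}H^{\dag}$ directly. Expanding,
\[
H_{1}XH_{1}=H_{1}H^{\dag}H_{1}H^{\dag}H_{1}=(H_{1}H^{\dag}H_{1})H^{\dag}H_{1}=H_{1}H^{\dag}H_{1}=H_{1},
\]
\[
XH_{1}X=H^{\dag}H_{1}H^{\dag}H_{1}H^{\dag}H_{1}H^{\dag}=H^{\dag}(H_{1}H^{\dag}H_{1})H^{\dag}H_{1}H^{\dag}=H^{\dag}(H_{1}H^{\dag}H_{1})H^{\dag}=H^{\dag}H_{1}H^{\dag}=X,
\]
so $H^{C,\textcircled{\dag}}$ is a reflexive $g$-inverse of $H_{1}$. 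Each computation just uses the identity $H_{1}H^{\dag}H_{1}=H_{1}$ (twice).

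Thus the only genuine content is the lemma $H_{1}H^{\dag}H_{1}=H_{1}$, which I would prove from the two facts recalled in the excerpt: $H_{1}=HH^{\textcircled{\dag}}H$, and the core-EP inverse is an outer inverse of $H$, \ie\ $H^{\textcircled{\dag}}HH^{\textcircled{\dag}}=H^{\textcircled{\dag}}$ (one of its defining equations from the introduction). Then
\[
H_{1}H^{\dag}H_{1}=HH^{\textcircled{\dag}}(HH^{\dag}H)H^{\textcircled{\dag}}H=HH^{\textcircled{\dag}}HH^{\textcircled{\dag}}H=H(H^{\textcircled{\dag}}HH^{\textcircled{\dag}})H=HH^{\textcircled{\dag}}H=H_{1},
\]
using $HH^{\dag}H=H$ in the middle. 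I do not expect a real obstacle here: the whole argument is a short chain of the Penrose equations plus the outer-inverse property of $H^{\textcircled{\dag}}$. The only thing to watch is bookkeeping — applying each cancellation to the correct sub-product, and not slipping into the (generally false) assumptions that $H^{\dag}$ is a reflexive $g$-inverse of $H$ or that $H_{1}$ and $H^{\dag}$ commute. Structuring the write-up as ``Lemma $H_{1}H^{\dag}H_{1}=H_{1}$; then (a); then (b)'' keeps things clean; a coordinate proof substituting the block forms from $(\ref{eq2.5})$ (and the block form of $H^{\dag}$) is also possible but more cumbersome.
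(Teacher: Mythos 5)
Your proposal is correct and follows essentially the same route as the paper: part (a) is immediate from the defining equation $XHX=X$ in Theorem~\ref{thm3.1}, and part (b) is verified by checking $H_{1}H^{C,\textcircled{\dag}}H_{1}=H_{1}$ and $H^{C,\textcircled{\dag}}H_{1}H^{C,\textcircled{\dag}}=H^{C,\textcircled{\dag}}$ from the Penrose equation $HH^{\dag}H=H$ and the outer-inverse property of $H^{\textcircled{\dag}}$, exactly the ``properties of $H^{\dag}$ and $H^{\textcircled{\dag}}$'' the paper invokes. The only difference is that you make explicit the auxiliary identity $H_{1}H^{\dag}H_{1}=H_{1}$, which the paper leaves as an easy unstated computation.
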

\begin{proof}
(a).  This is evident.

(b).  To prove the result, we need to show
$$
H_{1}H^{C,\textcircled{\dag}}H_{1}=H_{1}, \ \ \ H^{C,\textcircled{\dag}}H_{1}H^{C,\textcircled{\dag}}=H^{C,\textcircled{\dag}}.
$$
Using the properties of $H^{\dag}$ and $H^{\textcircled{\dag}}$, it is easy to prove both of two equations above.
Hence, the result holds.
\end{proof}
\begin{theorem}
Let $H\in\mathbb{C}^{n\times n}_{k}$. Then the following assertions are equivalent:
\begin{itemize}
\item[(a)] $H\in\mathbb{C}^{\mathrm{CM}}_{n}$;
\item[(b)] $H^{C,\textcircled{\dag}}\in H\{1\}\ (i.e.,HH^{C,\textcircled{\dag}}H=H)$;
\item[(c)] $H^{C,\textcircled{\dag}}=H^{\dag}$.
\end{itemize}
\end{theorem}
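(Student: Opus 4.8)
The plan is to prove the cyclic chain $(a)\Rightarrow(c)\Rightarrow(b)\Rightarrow(a)$, keeping the CEP-decomposition $H=H_{1}+H_{2}$ of $(\ref{eq2.5})$ fixed. Throughout I will use the elementary observation that, since $Q$ is nilpotent of index ${\rm Ind}(H)$ whereas ${\rm Ind}(H_{1})\leq 1$, the membership $H\in\mathbb{C}^{\mathrm{CM}}_{n}$ is equivalent to $H_{2}=0$, that is, to $H=H_{1}$.

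For $(a)\Rightarrow(c)$: if $H\in\mathbb{C}^{\mathrm{CM}}_{n}$ then $H=H_{1}$, so the definition of the CCE-inverse together with the Moore--Penrose identity $H^{\dag}HH^{\dag}=H^{\dag}$ gives $H^{C,\textcircled{\dag}}=H^{\dag}H_{1}H^{\dag}=H^{\dag}HH^{\dag}=H^{\dag}$. The step $(c)\Rightarrow(b)$ is immediate, because then $HH^{C,\textcircled{\dag}}H=HH^{\dag}H=H$, so $H^{C,\textcircled{\dag}}\in H\{1\}$.

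The substantive part is $(b)\Rightarrow(a)$. Assume $HH^{C,\textcircled{\dag}}H=H$. First I would collapse the left-hand side: by Theorem \ref{thm3.1} one has $HH^{C,\textcircled{\dag}}=H_{1}H^{\dag}$, hence $HH^{C,\textcircled{\dag}}H=H_{1}H^{\dag}H$, so the hypothesis becomes $H_{1}H^{\dag}H=H=H_{1}+H_{2}$, and therefore $H_{2}=H_{1}(H^{\dag}H-I_{n})$. In particular $\mathcal{R}(H_{2})\subseteq\mathcal{R}(H_{1})$. On the other hand, $(\ref{eq2.5})$ guarantees $H_{1}^{*}H_{2}=0$, i.e. $\mathcal{R}(H_{2})\subseteq\mathcal{N}(H_{1}^{*})=\mathcal{R}(H_{1})^{\perp}$. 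The two inclusions force $\mathcal{R}(H_{2})\subseteq\mathcal{R}(H_{1})\cap\mathcal{R}(H_{1})^{\perp}=\{0\}$, so $H_{2}=0$ and $H=H_{1}\in\mathbb{C}^{\mathrm{CM}}_{n}$, closing the cycle.

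The main obstacle is the implication $(b)\Rightarrow(a)$: the useful move is to rewrite $HH^{C,\textcircled{\dag}}H$ as $H_{1}H^{\dag}H$ (via Theorem \ref{thm3.1}, or via $P_{H}H_{1}=H_{1}$, which holds since $H_{1}=HH^{\textcircled{\dag}}H$ yields $\mathcal{R}(H_{1})\subseteq\mathcal{R}(H)$) and then to exploit the orthogonality relation $H_{1}^{*}H_{2}=0$ from the CEP-decomposition; everything else is routine manipulation of Moore--Penrose identities. If one prefers to avoid the range argument, an alternative is to insert the block forms from $(\ref{eq2.5})$ and an explicit block expression for $H^{\dag}$ into $HH^{C,\textcircled{\dag}}H=H$ and read off $Q=0$ by comparing the lower-right blocks, but that route first requires computing $H^{\dag}$ and is appreciably longer.
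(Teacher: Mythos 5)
Your proof is correct, but it is organized differently from the paper's. The paper proves the two equivalences $(a)\Leftrightarrow(b)$ and $(b)\Leftrightarrow(c)$: for $(a)\Leftrightarrow(b)$ it simply asserts that $H^{C,\textcircled{\dag}}\in H\{1\}$ holds if and only if the nilpotent part vanishes (an appeal, left implicit, to the block form of the decomposition), and for $(b)\Leftrightarrow(c)$ it pre- and post-multiplies $HH^{C,\textcircled{\dag}}H=H$ by $H^{\dag}$ (using $Q_HH^{C,\textcircled{\dag}}P_H=H^{C,\textcircled{\dag}}$) and, conversely, $H^{C,\textcircled{\dag}}=H^{\dag}$ by $H$. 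You instead close the cycle $(a)\Rightarrow(c)\Rightarrow(b)\Rightarrow(a)$, and your treatment of the only substantive implication $(b)\Rightarrow(a)$ is a genuine, coordinate-free argument the paper does not spell out: you rewrite $HH^{C,\textcircled{\dag}}H$ as $H_{1}H^{\dag}H$ via Theorem \ref{thm3.1}, deduce $H_{2}=H_{1}(H^{\dag}H-I_{n})$, and kill $H_{2}$ by combining $\mathcal{R}(H_{2})\subseteq\mathcal{R}(H_{1})$ with the CEP-orthogonality $H_{1}^{*}H_{2}=0$, i.e. $\mathcal{R}(H_{2})\subseteq\mathcal{N}(H_{1}^{*})=\mathcal{R}(H_{1})^{\perp}$; together with the (correct) observation that $H\in\mathbb{C}^{\mathrm{CM}}_{n}$ is equivalent to $H_{2}=0$, this is complete. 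What your route buys is an explicit proof of the step the paper dismisses as ``easily seen'' and complete avoidance of block computations; what the paper's route buys is a direct two-line link between (b) and (c) that needs only Moore--Penrose identities and no structural facts about the decomposition. Either way the theorem is established, so there is no gap to repair.
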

\begin{proof}
``$(a)\Leftrightarrow(b)$''. It can be easily seen that
$A^{C,\textcircled{\dag}}\in H\{1\}$ if and only if $H_{2}=0$ which is equivalent to $N=0$, i.e. $r(H)=r(H^{2})$.

``$(b)\Leftrightarrow(c)$''. Premultiplying and postmultiplying $H^{C,\textcircled{\dag}}=H^{\dag}$ by $H$, we obtain $H^{C,\textcircled{\dag}}\in H\{1\}$. Premultiplying and postmultiplying $HH^{C,\textcircled{\dag}}H=H$ by $H^{\dag}$ we have $H^{C,\textcircled{\dag}}=H^{\dag}$. The the desired result follows.
\end{proof}
\begin{theorem}\label{thm3.6}
Let $H\in\mathbb{C}^{n\times n}_{k}$. Then
$H^{C,\textcircled{\dag}}=H^{\dag}P_{H^{k}}=H^{\dag,D}P_{H^{k}}=H^{C,\dag}P_{H^{k}}$.
\end{theorem}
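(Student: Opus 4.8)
The plan is to prove the chain by reducing everything to manipulations of three projectors and showing in turn that each of the four expressions equals $H^{C,\textcircled{\dag}}$. First I would assemble the tools. (i) $HH^{\textcircled{\dag}}=P_{H^{k}}$, the orthogonal projector onto $\mathcal{R}(H^{k})$: this is a well-known property of the core-EP inverse (it follows from its defining relations, since $HH^{\textcircled{\dag}}$ is idempotent with range $\mathcal{R}(H^{k+1})=\mathcal{R}(H^{k})$ while $H^{\textcircled{\dag}}=H^{\textcircled{\dag}}HH^{\textcircled{\dag}}$ forces the complementary idempotent into $\mathcal{N}(H^{\textcircled{\dag}})=\mathcal{R}(H^{k})^{\perp}$), and it can be read off the block form of $(\ref{eq2.5})$, where $P_{H^{k}}=U\!\left[\begin{smallmatrix}I&0\\0&0\end{smallmatrix}\right]\!U^{*}$. (ii) $HH^{D}H^{k}=H^{D}H^{k+1}=H^{k}$, so that $HH^{D}P_{H^{k}}=HH^{D}H^{k}(H^{k})^{\dag}=P_{H^{k}}$. (iii) Since $\mathcal{R}(H^{k})\subseteq\mathcal{R}(H)$ and $P_{H},P_{H^{k}}$ are Hermitian idempotents, $P_{H}P_{H^{k}}=P_{H^{k}}=P_{H^{k}}P_{H}$.

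Then I would run three short computations. From the definition $H^{C,\textcircled{\dag}}=H^{\dag}HH^{\textcircled{\dag}}HH^{\dag}$ together with (i) and (iii),
$$H^{C,\textcircled{\dag}}=H^{\dag}\bigl(HH^{\textcircled{\dag}}\bigr)\bigl(HH^{\dag}\bigr)=H^{\dag}P_{H^{k}}P_{H}=H^{\dag}P_{H^{k}},$$
which is the first equality. For the second, using $H^{\dag,D}=H^{\dag}HH^{D}$ and (ii),
$$H^{\dag,D}P_{H^{k}}=H^{\dag}\bigl(HH^{D}P_{H^{k}}\bigr)=H^{\dag}P_{H^{k}}=H^{C,\textcircled{\dag}}.$$
For the third, $H^{C,\dag}=Q_{H}H^{D}P_{H}=H^{\dag}HH^{D}HH^{\dag}$, so using $HH^{\dag}P_{H^{k}}=P_{H}P_{H^{k}}=P_{H^{k}}$ from (iii) and then (ii),
$$H^{C,\dag}P_{H^{k}}=H^{\dag}HH^{D}\bigl(HH^{\dag}P_{H^{k}}\bigr)=H^{\dag}\bigl(HH^{D}P_{H^{k}}\bigr)=H^{\dag}P_{H^{k}}=H^{C,\textcircled{\dag}}.$$

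I do not expect a genuine obstacle: the whole content sits in the three projector identities (i)--(iii), each of which is an immediate consequence of the definitions (and entirely transparent in the block form of $(\ref{eq2.5})$). The only points requiring care are the bookkeeping with the two projectors $P_{H}$ and $P_{H^{k}}$ --- one must invoke the correct one-sided identity ($P_{H}P_{H^{k}}=P_{H^{k}}$ versus $P_{H^{k}}P_{H}=P_{H^{k}}$) at each step, although both hold --- and making sure the auxiliary claim $HH^{\textcircled{\dag}}=P_{H^{k}}$ is recorded (or cited) before it is used. As a cross-check one could alternatively verify all three equalities directly in the $U$-coordinates of $(\ref{eq2.5})$, multiplying out the block forms of $H^{\dag}$, $H_{1}$, $H^{D}$ and $P_{H^{k}}$.
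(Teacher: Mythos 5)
Your proposal is correct and follows essentially the same route as the paper: both rewrite $H^{C,\textcircled{\dag}}=H^{\dag}(HH^{\textcircled{\dag}})(HH^{\dag})$ and use $HH^{\textcircled{\dag}}=P_{H^{k}}$ (the paper cites this as $HH^{\textcircled{\dag}}=H^{k}(H^{k})^{\dag}$ from Wang's Corollary 3.4) together with $HH^{D}P_{H^{k}}=P_{H^{k}}$ and the absorption identities $P_{H}P_{H^{k}}=P_{H^{k}}P_{H}=P_{H^{k}}$ to reduce all three expressions to $H^{\dag}P_{H^{k}}$. The only difference is presentational: you state the projector lemmas (i)--(iii) explicitly before the computation, which the paper leaves implicit.
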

\begin{proof}
Since $HH^{\textcircled{\dag}}=H^{k}(H^{k})^{\dag}$ (see Corollary 3.4 of \cite{w}), we have
$$
H^{C,\textcircled{\dag}}=H^{\dag}(HH^{\textcircled{\dag}})HH^{\dag}=H^{\dag}P_{H^{k}}P_{H}=H^{\dag}P_{H^{k}}.
$$
Meanwhile, we have  $H^{\dag,D}P_{H^{k}}=H^{\dag}HH^{D}H^{k}(H^{k})^{\dag}=H^{\dag}H^{k}(H^{k})^{\dag}=H^{\dag}P_{H^{k}}=H^{C,\textcircled{\dag}}.$\\
Observe that $H^{C,\dag}P_{H^{k}}=H^{\dag}(HH^{D}H)H^{\dag}P_{H^{k}}=H^{\dag}HH^{D}P_{H^{k}}=H^{\dag,D}P_{H^{k}}=H^{C,\textcircled{\dag}}$.
Therefore, $H^{C,\textcircled{\dag}}=H^{\dag}P_{H^{k}}=H^{\dag,D}P_{H^{k}}=H^{C,\dag}P_{H^{k}}$.
\end{proof}
\begin{corollary}\label{cor3.7}
Let $H\in\mathbb{C}^{n\times n}_{k}$. Then
$$r(H^{C,\textcircled{\dag}})=r(H^{k}), \ \ \ \mathcal{R}(H^{C,\textcircled{\dag}})=\mathcal{R}(H^{\dag}H^{k}) \ \ \ and \ \ \ \mathcal{N}(H^{C,\textcircled{\dag}})=\mathcal{N}((H^{k})^{*}).$$
\end{corollary}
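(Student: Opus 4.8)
The plan is to leverage Theorem~\ref{thm3.6}, which already gives $H^{C,\textcircled{\dag}}=H^{\dag}P_{H^{k}}$, together with standard facts about ranks and about the range/null space of a product with an orthogonal projector. First I would establish the rank claim: since $P_{H^{k}}=H^{k}(H^{k})^{\dag}$ is the orthogonal projector onto $\mathcal{R}(H^{k})$, we have $r(H^{C,\textcircled{\dag}})=r(H^{\dag}P_{H^{k}})\le r(P_{H^{k}})=r(H^{k})$; for the reverse inequality I would use the core-EP decomposition $(\ref{eq2.5})$, where $H^{k}=U\left[\begin{smallmatrix}T^{k} & * \\ 0 & 0\end{smallmatrix}\right]U^{*}$ has rank equal to $r(T)=r(H^{k})$, and exhibit $H^{C,\textcircled{\dag}}$ (equivalently $H^{\dag}H_{1}H^{\dag}$) in block form to read off that its rank is again $r(T)$. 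Alternatively, and more cleanly, one notes $H^{C,\textcircled{\dag}}=H^{\dag}H_{1}H^{\dag}$ with $H^{\dag}$ invertible ``on the relevant pieces'' and $r(H_{1})=r(T)=r(H^{k})$, and that the outer-inverse identity $H^{C,\textcircled{\dag}}=H^{C,\textcircled{\dag}}HH^{C,\textcircled{\dag}}$ forces $r(H^{C,\textcircled{\dag}})\le r(H^{C,\textcircled{\dag}}H)\le r(H^{C,\textcircled{\dag}})$, pinning things down once one has $r(H^{C,\textcircled{\dag}}H)=r(H^{\dag}H_{1})=r(H_{1})$.

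Next I would handle the range space: from $H^{C,\textcircled{\dag}}=H^{\dag}P_{H^{k}}=H^{\dag}H^{k}(H^{k})^{\dag}$ we get $\mathcal{R}(H^{C,\textcircled{\dag}})\subseteq\mathcal{R}(H^{\dag}H^{k})$, and conversely, since $(H^{k})^{\dag}$ maps $\mathcal{R}(H^{k})=\mathcal{R}(H^{k}(H^{k})^{*}(H^{k})^{\dag\,*})$ onto $\mathcal{R}((H^{k})^{*})\supseteq$ everything needed, one checks $H^{\dag}H^{k}=H^{\dag}H^{k}(H^{k})^{\dag}H^{k}=H^{C,\textcircled{\dag}}H^{k}$, so $\mathcal{R}(H^{\dag}H^{k})\subseteq\mathcal{R}(H^{C,\textcircled{\dag}})$, giving equality. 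For the null space, again using $H^{C,\textcircled{\dag}}=H^{\dag}H^{k}(H^{k})^{\dag}$: clearly $\mathcal{N}((H^{k})^{*})=\mathcal{N}((H^{k})^{\dag})\subseteq\mathcal{N}(H^{C,\textcircled{\dag}})$; for the reverse inclusion one uses that $(H^{k})^{\dag}$ has trivial kernel on $\mathcal{R}(H^{k})$ and that $H^{\dag}$ is injective on $\mathcal{R}(H^{\dag\,*})=\mathcal{R}(H)\supseteq\mathcal{R}(H^{k})\supseteq\mathcal{R}(H^{\dag}H^{k})$ — equivalently, compare dimensions: $\dim\mathcal{N}(H^{C,\textcircled{\dag}})=n-r(H^{C,\textcircled{\dag}})=n-r(H^{k})=n-r((H^{k})^{*})=\dim\mathcal{N}((H^{k})^{*})$, and since one inclusion already holds, the two null spaces coincide.

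Concretely, the cleanest write-up is: (i) prove $r(H^{C,\textcircled{\dag}})=r(H^{k})$ via $r(H^{C,\textcircled{\dag}})=r(H^{\dag}P_{H^{k}})\le r(H^{k})$ and $r(H^{C,\textcircled{\dag}})\ge r(H^{C,\textcircled{\dag}}H)=r(H^{\dag}H_{1})$, noting $r(H^{\dag}H_{1})=r(H_{1})$ because $H^{\dag}H=Q_H$ is a projector and $\mathcal{R}(H_1)\subseteq\mathcal{R}(H)$, and $r(H_1)=r(T)=r(H^k)$; (ii) prove $\mathcal{R}(H^{C,\textcircled{\dag}})=\mathcal{R}(H^{\dag}H^{k})$ by the mutual inclusions above, both coming from $H^{\dag}H^{k}=H^{\dag}P_{H^{k}}H^{k}$ and $H^{C,\textcircled{\dag}}=H^{\dag}P_{H^k}$; (iii) deduce $\mathcal{N}(H^{C,\textcircled{\dag}})=\mathcal{N}((H^{k})^{*})$ from $\mathcal{N}((H^k)^*)=\mathcal{N}(P_{H^k})\subseteq\mathcal{N}(H^{\dag}P_{H^k})=\mathcal{N}(H^{C,\textcircled{\dag}})$ together with the dimension count from (i), since $r((H^k)^*)=r(H^k)=r(H^{C,\textcircled{\dag}})$. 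The main obstacle is the reverse rank inequality $r(H^{C,\textcircled{\dag}})\ge r(H^{k})$: it is tempting but wrong to bound a product's rank below by an arbitrary factor's rank, so one must route through $H^{C,\textcircled{\dag}}H=H^{\dag}H_{1}$ (an equation from Theorem~\ref{thm3.1}) and argue $r(H^{\dag}H_1)=r(H_1)$ carefully — e.g., from $H_1=HH^{\dag}H_1$ one gets $r(H_1)\le r(H^{\dag}H_1)\le r(H_1)$. Everything else is bookkeeping with orthogonal projectors.
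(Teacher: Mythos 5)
Your proposal is correct and rests on the same key ingredient as the paper, namely the formula $H^{C,\textcircled{\dag}}=H^{\dag}P_{H^{k}}$ from Theorem~\ref{thm3.6} together with $\mathcal{R}(H^{k})\subseteq\mathcal{R}(H)$. The paper argues slightly more directly---it gets $r(H^{C,\textcircled{\dag}})=r(H^{\dag}H^{k})=r(H^{k})$ and $\mathcal{N}(H^{\dag}P_{H^{k}})=\mathcal{N}(P_{H^{k}})$ immediately from the injectivity of $H^{\dag}$ on $\mathcal{R}(H)$---whereas you reach the same conclusions via the detour through $H^{C,\textcircled{\dag}}H=H^{\dag}H_{1}$ and a dimension count, but this is only a difference in bookkeeping, not in approach.
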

\begin{proof}
By Theorem \ref{thm3.6}, we deduce that
$$\mathcal{R}(H^{C,\textcircled{\dag}})=\mathcal{R}(H^{\dag}P_{H^{k}})=H^{\dag}\mathcal{R}(P_{H^{k}})=H^{\dag}\mathcal{R}(H^{k})=\mathcal{R}(H^{\dag}H^{k}),$$
$$\mathcal{N}(H^{C,\textcircled{\dag}})=\mathcal{N}(H^{\dag}P_{H^{k}})=\mathcal{N}(P_{H^{k}})=\mathcal{N}((H^{k})^{*}).$$
Due to $\mathcal{R}(H^{C,\textcircled{\dag}})=\mathcal{R}(H^{\dag}H^{k})$, we get that $r(H^{C,\textcircled{\dag}})=r(H^{\dag}H^{k})=r(H^{k}).$
\end{proof}
\begin{theorem}
Let $H\in\mathbb{C}^{n\times n}_{k}$. Then
\begin{itemize}
\item[(a)] $HH^{C,\textcircled{\dag}}$ is an orthogonal projection onto $\mathcal{R}(H^{k})$, i.e., $HH^{C,\textcircled{\dag}}=P_{H^{k}}$;
\item[(b)] $H^{C,\textcircled{\dag}}H$ is a projector onto $\mathcal{R}(H^{\dag}H^{k})$ along $\mathcal{N}((H^{k})^{\dag}H)$, i.e., $H^{\textcircled{\dag}}H=P_{\mathcal{R}(H^{\dag}H^{k}),\mathcal{N}((H^{k})^{\dag}H)}$.
\end{itemize}
\end{theorem}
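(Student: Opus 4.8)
The plan is to deduce everything from Theorem~\ref{thm3.6}, which supplies the factorization $H^{C,\textcircled{\dag}}=H^{\dag}P_{H^{k}}$, together with Corollary~\ref{cor3.7} (the range, null space and rank of $H^{C,\textcircled{\dag}}$) and the outer-inverse identity $H^{C,\textcircled{\dag}}HH^{C,\textcircled{\dag}}=H^{C,\textcircled{\dag}}$. For part (a) I would write $HH^{C,\textcircled{\dag}}=HH^{\dag}P_{H^{k}}=P_{H}P_{H^{k}}$; since $\mathcal{R}(H^{k})\subseteq\mathcal{R}(H)$ and $P_{H}$ acts as the identity on $\mathcal{R}(H)$, this collapses to $P_{H^{k}}$, which is Hermitian and idempotent with range $\mathcal{R}(H^{k})$ and hence is precisely the orthogonal projector onto $\mathcal{R}(H^{k})$. (Alternatively one can start from $HH^{C,\textcircled{\dag}}=H_{1}H^{\dag}$ of Theorem~\ref{thm3.1}, use $H_{1}=HH^{\textcircled{\dag}}H$ and $HH^{\textcircled{\dag}}=P_{H^{k}}$, and reach the same conclusion.)

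For part (b), set $E:=H^{C,\textcircled{\dag}}H$. First I would observe that $E$ is idempotent: right-multiplying $H^{C,\textcircled{\dag}}HH^{C,\textcircled{\dag}}=H^{C,\textcircled{\dag}}$ by $H$ gives $E^{2}=E$. Since any idempotent matrix is the projector onto its range along its null space, it then suffices to identify $\mathcal{R}(E)$ and $\mathcal{N}(E)$, and the well-definedness of the oblique projector $P_{\mathcal{R}(H^{\dag}H^{k}),\mathcal{N}((H^{k})^{\dag}H)}$ will follow automatically. For the range: $\mathcal{R}(E)\subseteq\mathcal{R}(H^{C,\textcircled{\dag}})=\mathcal{R}(H^{\dag}H^{k})$ by Corollary~\ref{cor3.7}, while from $H^{C,\textcircled{\dag}}=H^{C,\textcircled{\dag}}HH^{C,\textcircled{\dag}}$ one gets $r(E)=r(H^{C,\textcircled{\dag}}H)=r(H^{C,\textcircled{\dag}})=r(H^{k})=r(H^{\dag}H^{k})$, so the inclusion is an equality.

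The null-space identity is the main step. Using $E=H^{\dag}P_{H^{k}}H=H^{\dag}H^{k}(H^{k})^{\dag}H$ and the fact that $\mathcal{N}(H^{\dag})=\mathcal{R}(H)^{\perp}$ meets $\mathcal{R}(H^{k})\subseteq\mathcal{R}(H)$ trivially, so that $H^{\dag}$ is injective on $\mathcal{R}(H^{k})$ and $\mathcal{N}(H^{\dag}H^{k})=\mathcal{N}(H^{k})$, I would chase the equivalences
$$Ex=0 \iff H^{k}(H^{k})^{\dag}Hx=0 \iff P_{H^{k}}Hx=0 \iff Hx\perp\mathcal{R}(H^{k}).$$
On the other hand $(H^{k})^{\dag}Hx=0\iff Hx\in\mathcal{N}((H^{k})^{\dag})=\mathcal{N}((H^{k})^{*})=\mathcal{R}(H^{k})^{\perp}$, i.e. $Hx\perp\mathcal{R}(H^{k})$; hence $\mathcal{N}(E)=\mathcal{N}((H^{k})^{\dag}H)$. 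Combining this with the range computation proves that $E=H^{C,\textcircled{\dag}}H$ is the projector onto $\mathcal{R}(H^{\dag}H^{k})$ along $\mathcal{N}((H^{k})^{\dag}H)$. I expect the only delicate point to be this orthogonal-complement bookkeeping — in particular justifying $\mathcal{N}(H^{\dag}H^{k})=\mathcal{N}(H^{k})$ from $\mathcal{R}(H^{k})\subseteq\mathcal{R}(H)$ — everything else being a direct consequence of Theorem~\ref{thm3.6} and Corollary~\ref{cor3.7}.
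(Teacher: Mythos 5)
Your proposal is correct and follows essentially the same route as the paper: part (a) via $HH^{C,\textcircled{\dag}}=HH^{\dag}P_{H^{k}}=P_{H^{k}}$ from Theorem~\ref{thm3.6}, and part (b) by noting the outer-inverse identity makes $H^{C,\textcircled{\dag}}H$ idempotent and then identifying $\mathcal{R}(H^{C,\textcircled{\dag}}H)=\mathcal{R}(H^{\dag}H^{k})$ and $\mathcal{N}(H^{C,\textcircled{\dag}}H)=\mathcal{N}(P_{H^{k}}H)=\mathcal{N}((H^{k})^{\dag}H)$ exactly as in Corollary~\ref{cor3.7} and the paper's argument. Your rank count for the range equality and the explicit injectivity-of-$H^{\dag}$-on-$\mathcal{R}(H^{k})$ bookkeeping merely spell out steps the paper leaves implicit.
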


\begin{proof}
(a). By Theorem \ref{thm3.6}, we deduce that $HH^{C,\textcircled{\dag}}=HH^{\dag}P_{H^{k}}=P_{H^{k}}$.

(b). Since $H^{C,\textcircled{\dag}}$ is an outer inverse of $H$, $H^{C,\textcircled{\dag}}H$ is a projector. It follows from
$
\mathcal{R}(H^{C,\textcircled{\dag}}H)=\mathcal{R}(H^{C,\textcircled{\dag}})=\mathcal{R}(H^{\dag}H^{k}),
$
and
$
\mathcal{N}(H^{C,\textcircled{\dag}}H)=\mathcal{N}(H^{\dag}P_{H^{k}}H)=\mathcal{N}(P_{H^{k}}H)=\mathcal{N}((H^{k})^{\dag}H)
$
that
$
H^{C,\textcircled{\dag}}H=P_{\mathcal{R}(H^{\dag}H^{k}),\mathcal{N}((H^{k})^{\dag}H)}.
$
\end{proof}
\begin{theorem}\label{thm3.9}
Let $H\in\mathbb{C}^{n\times n}_{k}$. Then the following assertions are equivalent:
\begin{itemize}
\item[(a)] $X=H^{C,\textcircled{\dag}}$;
\item[(b)] $\mathcal{R}(X^{*})=\mathcal{R}(H^{k})$ and $XH=H^{\dag}H_{1}$;
\item[(c)] $\mathcal{R}(X^{*})=\mathcal{R}(H^{k})$ and $XH^{k}=H^{\dag}H^{k}$;
\item[(d)] $\mathcal{R}(X^{*})=\mathcal{R}(H^{k})$ and $XH^{D}=H^{\dag}H^{D}$;
\item[(e)] $\mathcal{R}(X^{*})=\mathcal{R}(H^{k})$ and $XHH^{D}=H^{\dag,D}$.
\end{itemize}
\end{theorem}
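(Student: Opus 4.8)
The plan is to prove the forward implications $(a)\Rightarrow(b)$, $(a)\Rightarrow(c)$, $(a)\Rightarrow(d)$, $(a)\Rightarrow(e)$ in one block, and then to close the circle by showing that each of $(b)$, $(c)$, $(d)$, $(e)$ implies $(a)$; the latter three (and, indirectly, the first) will all be routed through case $(c)$. The structural observation to record at the outset is that the range condition $\mathcal{R}(X^{*})=\mathcal{R}(H^{k})$ --- indeed, already the inclusion $\mathcal{R}(X^{*})\subseteq\mathcal{R}(H^{k})$ --- is equivalent to the identity $XP_{H^{k}}=X$. This is because $XP_{H^{k}}=X$ means $X(I-P_{H^{k}})=0$, i.e. $\mathcal{R}(I-P_{H^{k}})\subseteq\mathcal{N}(X)$, and since $\mathcal{R}(I-P_{H^{k}})=\mathcal{N}(P_{H^{k}})=\mathcal{N}((H^{k})^{*})=\mathcal{R}(H^{k})^{\perp}$ while $\mathcal{N}(X)=\mathcal{R}(X^{*})^{\perp}$, this inclusion says exactly $\mathcal{R}(X^{*})\subseteq\mathcal{R}(H^{k})$. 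I will also repeatedly use the formula $H^{C,\textcircled{\dag}}=H^{\dag}P_{H^{k}}$ from Theorem \ref{thm3.6}, the identity $HH^{\textcircled{\dag}}=P_{H^{k}}=H^{k}(H^{k})^{\dag}$ (Corollary 3.4 of \cite{w}), the relation $H^{C,\textcircled{\dag}}H=H^{\dag}H_{1}$ from Theorem \ref{thm3.1}, and the standard Drazin facts $\mathcal{R}(H^{D})=\mathcal{R}(HH^{D})=\mathcal{R}(H^{k})$, $H^{D}H^{k+1}=H^{k}$ and $H^{\dag}HH^{D}=H^{\dag,D}$.

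For the forward direction, $\mathcal{R}((H^{C,\textcircled{\dag}})^{*})=\mathcal{R}(H^{k})$ is precisely Corollary \ref{cor3.7}, so for each of $(b)$--$(e)$ it only remains to verify the second condition. Condition $(b)$ is immediate from Theorem \ref{thm3.1}. For $(c)$, $H^{C,\textcircled{\dag}}H^{k}=H^{\dag}P_{H^{k}}H^{k}=H^{\dag}H^{k}$. For $(d)$, since $\mathcal{R}(H^{D})=\mathcal{R}(H^{k})$ we have $P_{H^{k}}H^{D}=H^{D}$, so $H^{C,\textcircled{\dag}}H^{D}=H^{\dag}P_{H^{k}}H^{D}=H^{\dag}H^{D}$. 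For $(e)$, likewise $P_{H^{k}}HH^{D}=HH^{D}$ because $\mathcal{R}(HH^{D})=\mathcal{R}(H^{k})$, hence $H^{C,\textcircled{\dag}}HH^{D}=H^{\dag}P_{H^{k}}HH^{D}=H^{\dag}HH^{D}=H^{\dag,D}$.

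For the backward implications, each of $(b)$--$(e)$ includes the range condition, so $X=XP_{H^{k}}$ in all cases. In $(b)$, using $P_{H^{k}}=HH^{\textcircled{\dag}}$ and $H_{1}H^{\textcircled{\dag}}=HH^{\textcircled{\dag}}HH^{\textcircled{\dag}}=HH^{\textcircled{\dag}}$, I get $X=XHH^{\textcircled{\dag}}=(XH)H^{\textcircled{\dag}}=H^{\dag}H_{1}H^{\textcircled{\dag}}=H^{\dag}HH^{\textcircled{\dag}}=H^{\dag}P_{H^{k}}=H^{C,\textcircled{\dag}}$. In $(c)$, writing $P_{H^{k}}=H^{k}(H^{k})^{\dag}$ gives $X=XP_{H^{k}}=(XH^{k})(H^{k})^{\dag}=H^{\dag}H^{k}(H^{k})^{\dag}=H^{\dag}P_{H^{k}}=H^{C,\textcircled{\dag}}$. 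In $(d)$, post-multiplying $XH^{D}=H^{\dag}H^{D}$ by $H^{k+1}$ and invoking $H^{D}H^{k+1}=H^{k}$ gives $XH^{k}=H^{\dag}H^{k}$, which is case $(c)$. In $(e)$, post-multiplying $XHH^{D}=H^{\dag,D}=H^{\dag}HH^{D}$ by $H^{k}$ and invoking $HH^{D}H^{k}=H^{k}$ again gives $XH^{k}=H^{\dag}H^{k}$, i.e. case $(c)$.

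I do not anticipate any genuine obstacle: the argument is a short chain of manipulations with identities already established. The two spots that deserve a little care are the translation of the range hypothesis into the projector identity $X=XP_{H^{k}}$ (and the remark that one inclusion already suffices there), and keeping the standard range/kernel identities for the Drazin inverse straight, since those are exactly what let the Drazin-flavoured conditions $(d)$ and $(e)$ be exchanged for the $H^{k}$-flavoured condition $(c)$.
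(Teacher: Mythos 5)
Your proposal is correct and uses essentially the same ingredients as the paper's proof: the formula $H^{C,\textcircled{\dag}}=H^{\dag}P_{H^{k}}$ from Theorem \ref{thm3.6}, Corollary \ref{cor3.7} for the range condition, the translation of $\mathcal{R}(X^{*})=\mathcal{R}(H^{k})$ into $X=XP_{H^{k}}$, and postmultiplication by suitable powers of $H$, $H^{D}$ and $(H^{k})^{\dag}$. The only difference is organizational --- you argue hub-and-spoke through condition (c), whereas the paper runs the single cycle $(a)\Rightarrow(b)\Rightarrow(c)\Rightarrow(d)\Rightarrow(e)\Rightarrow(a)$ --- which is not a substantive departure.
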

\begin{proof}
``$(a)\Rightarrow(b)$''. The result is evident by Theorem \ref{thm3.1} and Corollary \ref{cor3.7}.

``$(b)\Rightarrow(c)$''. Note that $H_{1}H^{k-1}=P_{H^{k}}H^{k}=H^{k}.$
Postmultiplying $XH=H^{\dag}H_{1}$ by $H^{k-1}$, we obtain $XH^{k}=H^{\dag}H^{k}.$

``$(c)\Rightarrow(d)$''. Postmultiplying $XH^{k}=H^{\dag}H^{k}$ by $(H^{D})^{k+1}$, we get that $XH^{D}=H^{\dag}H^{D}$.

``$(d)\Rightarrow(e)$''. Postmultiplying $XH^{D}=H^{\dag}H^{D}$ by $H$, we have $XHH^{D}=H^{\dag,D}$.

``$(e)\Rightarrow(a)$''. Postmultiplying $XHH^{D}=H^{\dag,D}$ by $H^{k}$, we get that
$XH^{k}=H^{\dag}H^{k}.$ Postmultiplying $XH^{k}=H^{\dag}H^{k}$ by $(H^{k})^{\dag}$, we have $XP_{H^{k}}=H^{\dag}P_{H^{k}}$. It follows from $\mathcal{R}(X^{*})=\mathcal{R}(H^{k})$ that $X=XP_{H^{k}}$. Thanks to Theorem \ref{thm3.6}, we obtain $X=H^{\dag}P_{H^{k}}=H^{C,\textcircled{\dag}}$.
\end{proof}

\begin{theorem}
Let $H\in\mathbb{C}^{n\times n}_{k}$. Then the following assertions are equivalent:
\begin{itemize}
\item[(a)] $X=H^{C,\textcircled{\dag}};$
\item[(b)] $XH_{1}X=X, H_{1}X=H_{1}A^{\dag}$ and $XH_{1}=H^{\dag}H_{1}$;
\item[(c)] $r(X)=r(H^{k})$, $H_{1}X=H_{1}H^{\dag}$ and $XH_{1}=H^{\dag}H_{1}$;
\item[(d)] $r(X)=r(H^{k})$, $HX=H_{1}H^{\dag}$ and $XH=H^{\dag}H_{1}$;
\item[(e)] $XP_{H^{k}}=X$, $XH=H^{\dag}H_{1}$.
\end{itemize}
\end{theorem}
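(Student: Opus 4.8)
The natural route is the cyclic chain $(a)\Rightarrow(b)\Rightarrow(c)\Rightarrow(d)\Rightarrow(e)\Rightarrow(a)$. The ingredients I would use are Theorem \ref{thm3.1} (characterizing $H^{C,\textcircled{\dag}}$ as the unique $X$ with $XHX=X$, $HX=H_{1}H^{\dag}$, $XH=H^{\dag}H_{1}$), Theorem \ref{thm3.6} ($H^{C,\textcircled{\dag}}=H^{\dag}P_{H^{k}}=H^{\dag}H^{k}(H^{k})^{\dag}$), Corollary \ref{cor3.7} ($r(H^{C,\textcircled{\dag}})=r(H^{k})$, $\mathcal{N}(H^{C,\textcircled{\dag}})=\mathcal{N}((H^{k})^{*})$), the fact that $H^{C,\textcircled{\dag}}$ is a reflexive $g$-inverse of $H_{1}$, and three elementary identities coming from $H_{1}=HH^{\textcircled{\dag}}H$ and $H^{\textcircled{\dag}}HH^{\textcircled{\dag}}=H^{\textcircled{\dag}}$: namely $H_{1}H^{\dag}H_{1}=H_{1}$, $HH^{\textcircled{\dag}}H_{1}=H_{1}$, and $H_{1}H^{k-1}=P_{H^{k}}H^{k}=H^{k}$ (the last one already used in the proof of Theorem \ref{thm3.9}).

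For $(a)\Rightarrow(b)$ I would substitute $X=H^{\dag}H_{1}H^{\dag}$: the equation $XH_{1}X=X$ is the assertion that $H^{C,\textcircled{\dag}}$ is a reflexive $g$-inverse of $H_{1}$, while $H_{1}X=H_{1}H^{\dag}H_{1}H^{\dag}=H_{1}H^{\dag}$ and $XH_{1}=H^{\dag}H_{1}H^{\dag}H_{1}=H^{\dag}H_{1}$ both reduce to $H_{1}H^{\dag}H_{1}=H_{1}$. (I read the middle equation of (b) as $H_{1}X=H_{1}H^{\dag}$, the ``$A^{\dag}$'' being a misprint.)

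The three middle implications share one engine: each hypothesis already forces $X=H^{C,\textcircled{\dag}}$, after which the target assertion is read off from Theorems \ref{thm3.1}, \ref{thm3.6} and Corollary \ref{cor3.7}. In case (b) this is immediate: $X=XH_{1}X=(XH_{1})X=H^{\dag}H_{1}X=H^{\dag}(H_{1}X)=H^{\dag}H_{1}H^{\dag}=H^{C,\textcircled{\dag}}$, so $r(X)=r(H^{k})$ and the remaining equations of (c) are among those of (b). In case (c) the same computation yields only $XH_{1}X=H^{C,\textcircled{\dag}}$; in case (d) one first rewrites $H_{1}X=HH^{\textcircled{\dag}}(HX)=HH^{\textcircled{\dag}}H_{1}H^{\dag}=H_{1}H^{\dag}$ and then $XHX=(XH)X=H^{\dag}H_{1}X=H^{\dag}H_{1}H^{\dag}=H^{C,\textcircled{\dag}}$. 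To finish (c) and (d) I would invoke the rank hypothesis: from $\mathcal{N}(X)\subseteq\mathcal{N}(XH_{1}X)=\mathcal{N}(H^{C,\textcircled{\dag}})=\mathcal{N}((H^{k})^{*})$ together with $r(X)=r(H^{k})=r((H^{k})^{*})$ one obtains $\mathcal{N}(X)=\mathcal{N}((H^{k})^{*})$, i.e.\ $XP_{H^{k}}=X$; post-multiplying $XH_{1}=H^{\dag}H_{1}$ (resp.\ $XH=H^{\dag}H_{1}$) by $H^{k-1}$ and using $H_{1}H^{k-1}=H^{k}$ gives $XH^{k}=H^{\dag}H^{k}$, whence $X=XP_{H^{k}}=XH^{k}(H^{k})^{\dag}=H^{\dag}P_{H^{k}}=H^{C,\textcircled{\dag}}$ by Theorem \ref{thm3.6}. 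Then (d) (resp.\ (e)) follows: $H^{C,\textcircled{\dag}}$ obeys $HX=H_{1}H^{\dag}$ and $XH=H^{\dag}H_{1}$ by Theorem \ref{thm3.1}, has rank $r(H^{k})$ by Corollary \ref{cor3.7}, and satisfies $H^{C,\textcircled{\dag}}P_{H^{k}}=H^{\dag}P_{H^{k}}^{2}=H^{\dag}P_{H^{k}}=H^{C,\textcircled{\dag}}$.

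For $(e)\Rightarrow(a)$: post-multiplying $XH=H^{\dag}H_{1}$ by $H^{k-1}$ gives $XH^{k}=H^{\dag}H^{k}$, hence $X=XP_{H^{k}}=XH^{k}(H^{k})^{\dag}=H^{\dag}H^{k}(H^{k})^{\dag}=H^{\dag}P_{H^{k}}=H^{C,\textcircled{\dag}}$ by Theorem \ref{thm3.6}. The one step I expect to require care is the rank argument in $(c)\Rightarrow(d)$ and $(d)\Rightarrow(e)$: one must verify that $r(X)=r(H^{k})$, the inclusion $\mathcal{N}(X)\subseteq\mathcal{N}(H^{C,\textcircled{\dag}})$ extracted from $XH_{1}X=H^{C,\textcircled{\dag}}$, and the equalities $r(H^{C,\textcircled{\dag}})=r(H^{k})=r((H^{k})^{*})$ together pin down $\mathcal{N}(X)$ exactly, since this is precisely what upgrades the comparatively weak data of (c) and (d) to the projector identity $XP_{H^{k}}=X$ needed to apply Theorem \ref{thm3.6}. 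Everything else is routine manipulation with $H^{\dag}$, $H^{\textcircled{\dag}}$ and the CEP-decomposition.
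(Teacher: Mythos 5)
Your proposal is correct and follows essentially the same route as the paper: the same ingredients (Theorem \ref{thm3.1}, Theorem \ref{thm3.6}, Corollary \ref{cor3.7}, the reflexive $g$-inverse property with respect to $H_{1}$, and the identity $H_{1}H^{k-1}=H^{k}$) drive the same computations, the only differences being that you arrange the implications as a single cycle $(a)\Rightarrow(b)\Rightarrow(c)\Rightarrow(d)\Rightarrow(e)\Rightarrow(a)$ while the paper proves $(c)\Rightarrow(a)$, $(d)\Rightarrow(a)$, $(e)\Rightarrow(a)$ separately, and that you convert the rank hypothesis into $\mathcal{N}(X)=\mathcal{N}((H^{k})^{*})$ (hence $XP_{H^{k}}=X$) where the paper derives $\mathcal{R}(X^{*})=\mathcal{R}(H^{k})$ and cites Theorem \ref{thm3.9}(c) --- equivalent arguments. (You also correctly identified ``$A^{\dag}$'' in (b) as a misprint for $H^{\dag}$; the only nitpick is that in case (d) the null-space inclusion should be read with $XHX$ in place of $XH_{1}X$, which your own computation already supplies.)
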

\begin{proof}
``$(a)\Rightarrow(b)$''. We need to check the following three equations:
$$
H^{C,\textcircled{\dag}}H_{1}H^{C,\textcircled{\dag}}=H^{C,\textcircled{\dag}},
H_{1}H^{C,\textcircled{\dag}}=(HH^{\textcircled{\dag}}H)H^{\dag},
H^{C,\textcircled{\dag}}H_{1}=H^{\dag}H_{1}.
$$

``$(b)\Rightarrow(c)$''. According to $X=XH_{1}X$, we derive that $$r(X)=r(H_{1}X)=r(H_{1}H^{\dag})=r(HH^{\textcircled{\dag}}HH^{\dag})=r(P_{H^{k}}P_{H})=r(P_{H^{k}})=r(H^{k}).$$

``$(c)\Rightarrow(a)$''. By $H_{1}X=H_{1}H^{\dag}=P_{H^{k}}$, we can obtain that $\mathcal{R}(H^{k})=\mathcal{R}(P_{H^{k}})=\mathcal{R}((P_{H^{k}})^{*})=\mathcal{R}((H_{1}X)^{*})\subseteq\mathcal{R}(X^{*}).$
Noting that $r(X)=r(H^{k})$ we get that $\mathcal{R}(X^{*})=\mathcal{R}(H^{k})$.
Postmultiplying $XH_{1}=H^{\dag}H_{1}$ by $H^{\textcircled{\dag}}$ and using $H_{1}H^{\textcircled{\dag}}=P_{H^{k}}$, we get that $XP_{H^{k}}=H^{\dag}P_{H^{k}}$. Postmultiplying
$XP_{H^{k}}=H^{\dag}P_{H^{k}}$ by $H^{k}$, we have $XH^{k}=H^{\dag}H^{k}$. Thus, due to (c) of Theorem \ref{thm3.9}, $X=H^{C,\textcircled{\dag}}$.

``$(a)\Rightarrow(d)$''. By Theorem \ref{thm3.1} and Corollary \ref{cor3.7}, the desired result follows.

``$(d)\Rightarrow(a)$''. The proof is analogous to that of $(c)\Rightarrow(a)$.

``$(a)\Rightarrow(e)$''. According to Theorem \ref{thm3.1} and Theorem \ref{thm3.6}, the desired result follows.

``$(e)\Rightarrow(a)$''. Note that $X=XP_{H^{k}}=(XH)(H^{k-1}(H^{k})^{\dag})=H^{\dag}(HH^{\textcircled{\dag}}H)(H^{k-1}(H^{k})^{\dag})=H^{\dag}P_{H^{k}}P_{H^{k}}=H^{\dag}P_{H^{k}}.$
Then $X=H^{C,\textcircled{\dag}}$.
\end{proof}

It is widely known that the generalized inverse can be expressed as a special kind of outer inverse along with prescribed range and null space. Therefore, we will prove that the same property holds in the case of CCE-inverse.

\begin{theorem}\label{thm3.11}
Let $H\in\mathbb{C}^{n\times n}_{k}$. Then
$$H^{C,\textcircled{\dag}}=H^{(2)}_{\mathcal{R}(H^{\dag}H^{k}),\mathcal{N}((H^{k})^{*})}.$$
\end{theorem}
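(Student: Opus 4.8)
The plan is to verify the three defining conditions of the outer inverse with prescribed range and null space, namely that $X = H^{C,\textcircled{\dag}}$ satisfies $XHX = X$, $\mathcal{R}(X) = \mathcal{R}(H^{\dag}H^{k})$, and $\mathcal{N}(X) = \mathcal{N}((H^{k})^{*})$, and then invoke the uniqueness of the matrix $H^{(2)}_{\mathcal{T},\mathcal{S}}$. All three of these facts are already available in the excerpt: the outer-inverse equation $XHX=X$ is the first equation of Theorem~\ref{thm3.1} (or part (a) of the earlier theorem), while the range and null-space identities are exactly the content of Corollary~\ref{cor3.7}.

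First I would recall that a matrix $X$ with $XHX = X$ is automatically of the form $H^{(2)}_{\mathcal{R}(X),\mathcal{N}(X)}$, provided the pair $(\mathcal{R}(X),\mathcal{N}(X))$ is admissible (i.e.\ complementary after acting by $H$); this is the standard characterization of outer inverses. So the only real work is to pin down $\mathcal{R}(H^{C,\textcircled{\dag}})$ and $\mathcal{N}(H^{C,\textcircled{\dag}})$, and this has already been done: Corollary~\ref{cor3.7} gives $\mathcal{R}(H^{C,\textcircled{\dag}}) = \mathcal{R}(H^{\dag}H^{k})$ and $\mathcal{N}(H^{C,\textcircled{\dag}}) = \mathcal{N}((H^{k})^{*})$. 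Thus the proof reduces to citing Theorem~\ref{thm3.1} for $XHX=X$, citing Corollary~\ref{cor3.7} for the range and kernel, and concluding $H^{C,\textcircled{\dag}} = H^{(2)}_{\mathcal{R}(H^{\dag}H^{k}),\mathcal{N}((H^{k})^{*})}$.

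The one point that deserves a sentence of care is \emph{existence/well-posedness} of the symbol $H^{(2)}_{\mathcal{T},\mathcal{S}}$: for an outer inverse with given range $\mathcal{T}$ and null space $\mathcal{S}$ to exist and be unique one needs $\dim \mathcal{T} = \dim \mathcal{S}^{\perp}$ (equivalently $H$ maps $\mathcal{T}$ bijectively onto a complement of $\mathcal{S}$). Here $\dim \mathcal{R}(H^{\dag}H^{k}) = r(H^{k})$ by Corollary~\ref{cor3.7}, and $\dim \mathcal{N}((H^{k})^{*})^{\perp} = \dim \mathcal{R}(H^{k}) = r(H^{k})$ as well, so the dimensions match and $H^{C,\textcircled{\dag}}$ itself witnesses existence. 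I anticipate no genuine obstacle: the theorem is essentially a repackaging of results already proved, and the only thing to be careful about is invoking the correct uniqueness statement for $H^{(2)}_{\mathcal{T},\mathcal{S}}$ once the three conditions are in hand.

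\begin{proof}
By part (a) of the preceding theorem (equivalently, by the first equation of Theorem~\ref{thm3.1}), $H^{C,\textcircled{\dag}}$ is an outer inverse of $H$, that is, $H^{C,\textcircled{\dag}}HH^{C,\textcircled{\dag}}=H^{C,\textcircled{\dag}}$. By Corollary~\ref{cor3.7}, $\mathcal{R}(H^{C,\textcircled{\dag}})=\mathcal{R}(H^{\dag}H^{k})$ and $\mathcal{N}(H^{C,\textcircled{\dag}})=\mathcal{N}((H^{k})^{*})$, and moreover $r(H^{C,\textcircled{\dag}})=r(H^{k})$, which equals $\dim \mathcal{R}(H^{k}) = \dim \mathcal{N}((H^{k})^{*})^{\perp}$, so the pair of subspaces is admissible. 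Since an outer inverse of $H$ is uniquely determined by its range and null space, it follows that $H^{C,\textcircled{\dag}}=H^{(2)}_{\mathcal{R}(H^{\dag}H^{k}),\mathcal{N}((H^{k})^{*})}$.
\end{proof}
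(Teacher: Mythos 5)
Your proof is correct and follows essentially the same route as the paper: note that $H^{C,\textcircled{\dag}}$ is an outer inverse of $H$, invoke Corollary~\ref{cor3.7} for its range and null space, and conclude by uniqueness of $H^{(2)}_{\mathcal{T},\mathcal{S}}$. Your extra remark on admissibility (matching dimensions ensuring the symbol is well defined) is a point the paper leaves implicit, but it does not change the argument.
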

\begin{proof}
From the definition of CCE-inverse, we know that $H^{C,\textcircled{\dag}}$ is an outer inverse of $H$. By Corollary \ref{cor3.7}, we have $$\mathcal{R}(H^{C,\textcircled{\dag}})=\mathcal{R}(H^{\dag}H^{k}),$$
$$\mathcal{N}(H^{C,\textcircled{\dag}})=\mathcal{N}((H^{k})^{*}).$$
So, we obtain this result.
\end{proof}

Next, we propose a connection between the $(B,C)-$inverse and CCE-inverse, which shows that the CCE-inverse of a matrix $H\in\mathbb{C}^{n\times n}_{k}$ is the $(H^{\dag}H^{k},(H^{k})^{*})-$inverse of $H$.
\begin{theorem}
Let $H\in\mathbb{C}^{n\times n}_{k}$. Then
$$H^{C,\textcircled{\dag}}=H^{(H^{\dag}H^{k},(H^{k})^{*})}.$$
\end{theorem}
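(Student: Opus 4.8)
The plan is to show that $X = H^{C,\textcircled{\dag}}$ satisfies the four defining conditions of the $(B,C)$-inverse with $B = H^{\dag}H^{k}$ and $C = (H^{k})^{*}$, namely
$$
CHX = C, \qquad XHB = B, \qquad \mathcal{R}(X) = \mathcal{R}(B), \qquad \mathcal{N}(X) = \mathcal{N}(C),
$$
and then invoke the uniqueness of the $(B,C)$-inverse. The last two conditions are already available for free: Corollary \ref{cor3.7} gives $\mathcal{R}(H^{C,\textcircled{\dag}}) = \mathcal{R}(H^{\dag}H^{k})$ and $\mathcal{N}(H^{C,\textcircled{\dag}}) = \mathcal{N}((H^{k})^{*})$, which are exactly $\mathcal{R}(B)$ and $\mathcal{N}(C)$. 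So the work reduces to verifying the two algebraic identities $CHX = C$ and $XHB = B$.

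For $CHX = C$, I would use Theorem \ref{thm3.6} which says $X = H^{C,\textcircled{\dag}} = H^{\dag}P_{H^{k}}$, together with part (a) of the preceding theorem that $HH^{C,\textcircled{\dag}} = P_{H^{k}} = H^{k}(H^{k})^{\dag}$. Then $CHX = (H^{k})^{*} H H^{C,\textcircled{\dag}} = (H^{k})^{*} H^{k}(H^{k})^{\dag}$, and since $(H^{k})^{*} H^{k}(H^{k})^{\dag} = (H^{k})^{*}$ (this is the MP-identity $(H^{k})^{*} = (H^{k})^{*}(H^{k})^{\dag *}(H^{k})^{*}$ rewritten, or more simply the fact that $(H^{k})^{\dag}$ is a left identity on $\mathcal{R}((H^{k})^{*})$ after transposing), we get $CHX = C$. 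For $XHB = B$, I would compute $XH \cdot H^{\dag}H^{k}$. By Theorem \ref{thm3.1}, $XH = H^{C,\textcircled{\dag}}H = H^{\dag}H_{1}$, so $XHB = H^{\dag}H_{1}H^{\dag}H^{k}$. Using $H_{1}H^{\dag} = HH^{\textcircled{\dag}}HH^{\dag} = P_{H^{k}}P_{H} = P_{H^{k}}$ (as in Theorem \ref{thm3.6}) and then $P_{H^{k}}H^{k} = H^{k}$, this collapses to $H^{\dag}H^{k} = B$, as desired.

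The main obstacle, such as it is, is purely bookkeeping: making sure every rewriting step is justified by an identity established earlier in the paper rather than asserted. The key facts I will lean on are $HH^{\textcircled{\dag}} = H^{k}(H^{k})^{\dag}$ (Corollary 3.4 of \cite{w}), the relation $H_{1} = HH^{\textcircled{\dag}}H$, and the projector identities $P_{H^{k}}H^{k} = H^{k}$ and $(H^{k})^{*}P_{H^{k}} = (H^{k})^{*}$. Once $CHX = C$ and $XHB = B$ are checked and the range/null-space conditions are read off from Corollary \ref{cor3.7}, the definition of the $(B,C)$-inverse recalled in the introduction forces $X = H^{(H^{\dag}H^{k},(H^{k})^{*})}$, completing the proof. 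I would present the argument as a short four-line display verifying the two identities, preceded by one sentence citing Corollary \ref{cor3.7} for the range and null space, and closed by the uniqueness remark.
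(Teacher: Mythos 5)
Your proposal is correct and follows essentially the same route as the paper: both verify the four defining conditions of the $(B,C)$-inverse with $B=H^{\dag}H^{k}$, $C=(H^{k})^{*}$, reading off the range and null space from Corollary \ref{cor3.7} and checking $CHX=C$ and $XHB=B$ by short computations based on $H^{C,\textcircled{\dag}}=H^{\dag}P_{H^{k}}$ and $HH^{\textcircled{\dag}}=H^{k}(H^{k})^{\dag}$. The only difference is cosmetic: you quote $HH^{C,\textcircled{\dag}}=P_{H^{k}}$ directly, whereas the paper expands $H^{C,\textcircled{\dag}}=H^{\dag}HH^{\textcircled{\dag}}HH^{\dag}$ inline.
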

\begin{proof}
Employing Corollary \ref{cor3.7}, we can obtain
$$
\mathcal{R}(H^{C,\textcircled{\dag}})=\mathcal{R}(H^{\dag}H^{k}), \ \ \ \mathcal{N}(H^{C,\textcircled{\dag}})=\mathcal{N}((H^{k})^{*}).
$$
Observe that
\begin{eqnarray*}
H^{C,\textcircled{\dag}}H(H^{\dag}H^{k})&=&H^{\dag}P_{H^{k}}H^{k}=H^{\dag}H^{k},\\
(H^{k})^{*}HH^{C,\textcircled{\dag}}&=&(H^{k})^{*}H(H^{\dag}HH^{\textcircled{\dag}}HH^{\dag})\\
&=&(H^{k})^{*}(HH^{\textcircled{\dag}})(HH^{\dag})=(H^{k})^{*}P_{H^{k}}\\
&=&(P_{H^{k}}H^{k})^{*}=(H^{k})^{*}.
\end{eqnarray*}
Thus, we obtain $H^{C,\textcircled{\dag}}=H^{(H^{\dag}P_{H^{k}},(H^{k})^{*})}.$
\end{proof}

\section{Two canonical forms of CCE-inverse and their applications}

In this section, we present two canonical forms of CCE-inverse by matrix decompositions, which is used to study the properties of CCE-inverse of $H$. Before that, we need to do some preparations.

For convenience, we adopt the following notation in the sequel.
\begin{itemize}
\item $\mathbb{C}^{\textrm{PI}}_{m,n}=\{H\mid H\in\mathbb{C}^{m\times n},  HH^{*}H=H\}$ denotes the set of comprising partial isometries;
\item Let $\mathbb{C}^{\textrm{P}}_{n}=\{H\mid H\in\mathbb{C}^{n\times n}, H^{2}=H\}$ be the set of projectors;
\item $\mathbb{C}^{\textrm{OP}}_{n}=\{H\mid H\in\mathbb{C}^{n\times n}, H^{2}=H=H^{*}\}$ is the set of orthogonal projectors;
\item $\mathbb{C}^{\textrm{EP}}_{n}=\{H\mid H\in\mathbb{C}^{n\times n}, HH^{\dag}=H^{\dag}H\}$ is the set of EP matrices;
\item $\mathbb{C}^{k,\dag}_{n}=\mathbb{C}^{k-\textrm{EP}}=\{H\mid H\in\mathbb{C}^{n\times n}_{k}, H^{k}H^{\dag}=H^{\dag}H^{k}\}$ denotes the set of $k$-EP matrices;
\item $\mathbb{C}^{i-\textrm{EP}}_{n}=\{H\mid H\in\mathbb{C}^{n\times n}_{k}, H^{k}(H^{k})^{\dag}=(H^{k})^{\dag}H^{k}\}$ denotes the set of $i$-EP matrices;
\item $\mathbb{C}^{k,\textcircled{\dag}}_{n}=\{H\mid H\in\mathbb{C}^{n\times n}_{k}, H^{k}H^{\textcircled{\dag}}=H^{\textcircled{\dag}}H^{k}\}$ is the set of $k$-Core EP matrices;
\item Let $\mathbb{C}^{k,D\dag}_{n}=\{H\mid H\in\mathbb{C}^{n\times n}_{k}, H^{k}H^{D,\dag}=H^{D,\dag}H^{k}\}$ be the set of $k$-DMP matrices;
\item Let $\mathbb{C}^{k,\dag D}_{n}=\{H\mid H\in\mathbb{C}^{n\times n}_{k},H^{k}H^{\dag,D}=H^{\dag,D}H^{k}\}$ be the set of dual $k$-DMP matrices;
\item $\mathbb{C}^{k,C\dag}_{n}=\{H\mid H\in\mathbb{C}^{n\times n}_{k}, H^{k}H^{C,\dag}=H^{C,\dag}H^{k}\}$ denotes the set of $k$-CMP matrices.
\end{itemize}

\subsection{The first canonical form derived from the Hartwig-Spindelbock decomposition}

On the basis of Corollary 6 in \cite{HS}, every $H\in\mathbb{C}^{n\times n}$ with rank $r$ can be expressed by
\begin{equation}\label{eq2.1}
H=U\left[\begin{array}{cc}
\Sigma M & \Sigma N \\
0 & 0 \\
\end{array}
\right]U^{*},
\end{equation}
where $U\in\mathbb{C}^{n\times n}$ is unitary, $\Sigma=diag(\sigma_{1},\sigma_{2},\ldots,\sigma_{r})$ is a diagonal matrix whose diagonal entries $\sigma_{i}>0$ $(i=1,2,\cdots,r)$ are singular values of $H$ and $M\in\mathbb{C}^{r\times r}$, $N\in\mathbb{C}^{r\times(n-r)}$ such that
\begin{equation}\label{eq2.2}
 MM^{*}+NN^{*}=I_{r}.
\end{equation}

According to the above decomposition, a straightforward computation shows that
 \begin{equation}\label{eq2.3}
H^{\dag}=U \left[\begin{array}{cc}
M^{*} \Sigma^{-1} & 0 \\
N^{*} \Sigma^{-1} & 0 \\
\end{array}
\right] U^{*},\ \ \
P_{H}=HH^{\dag}=U \left[\begin{array}{cc}
I_{r} & 0 \\
0 & 0 \\
\end{array}
\right] U^{*}.
\end{equation}

It is well-known that (see \cite{Bt2,flt2}):
 \begin{equation}\label{parameter}\nonumber
H^{\#}=U \left[\begin{array}{cc}
(\Sigma M)^{-1} & M^{-1}\Sigma^{-1}M^{-1}N\\
0 & 0 \\
\end{array}
\right] U^{*},\ \ \
H^{\textcircled{\#}}=U \left[\begin{array}{cc}
(\Sigma M)^{-1} & 0 \\
0 & 0 \\
\end{array}
\right] U^{*},
\end{equation}
\begin{equation}\label{eq2.4}
H^{\textcircled{\dag}}=U \left[\begin{array}{cc}
(\Sigma M)^{\textcircled{\dag}} & 0 \\
0 & 0 \\
\end{array}
\right] U^{*}.
\end{equation}

Using the decomposition of (\ref{eq2.1}), we have the following properties.

\begin{lemma}\label{lem2.1}{\rm \cite{Bs}}
Assume that $H\in\mathbb{C}^{n\times n}$ with rank $r$ is decomposed by $(\ref{eq2.1})$. Then,
\begin{itemize}
\item[(a)] $H\in\mathbb{C}^{\mathrm{CM}}_{n}\Leftrightarrow M$ is nonsingular;
\item[(b)] $H\in\mathbb{C}^{\mathrm{P}}_{n}\Leftrightarrow \Sigma M=I_{r}$;
\item[(c)] $H\in\mathbb{C}^{\mathrm{OP}}_{n}\Leftrightarrow N=0, \Sigma=M=I_{r}$;
\item[(d)] $H\in\mathbb{C}^{\mathrm{PI}}_{n,n}\Leftrightarrow \Sigma=I_{r}$;
\item[(e)] $H\in\mathbb{C}^{\mathrm{EP}}_{n}\Leftrightarrow N=0$.
\end{itemize}
\end{lemma}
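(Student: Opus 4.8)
The plan is to substitute the Hartwig--Spindelb\"ock form $(\ref{eq2.1})$, together with the formulas $(\ref{eq2.3})$ for $H^{\dag}$ and $P_{H}$, into the defining identity of each of the five matrix classes and to read off the constraints it imposes on the blocks $\Sigma$, $M$, $N$. Two immediate consequences of the normalization $(\ref{eq2.2})$ are used throughout: $\Sigma$ is invertible, since its diagonal entries are the positive singular values of $H$; and the block row $\begin{bmatrix} M & N\end{bmatrix}\in\mathbb{C}^{r\times n}$ has full row rank $r$, because $\begin{bmatrix} M & N\end{bmatrix}\begin{bmatrix} M & N\end{bmatrix}^{*}=MM^{*}+NN^{*}=I_{r}$ exhibits $\begin{bmatrix} M & N\end{bmatrix}^{*}$ as a right inverse. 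This last fact is the main device: an identity of the form $A\,\Sigma\begin{bmatrix} M & N\end{bmatrix}=B\,\Sigma\begin{bmatrix} M & N\end{bmatrix}$ may be cancelled on the right (the right-hand factor being right-invertible) to give $A=B$, and likewise $A\begin{bmatrix} M & N\end{bmatrix}=0$ forces $A=0$.

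With these preliminaries I would argue class by class. For (e), multiplying out shows that the four blocks of $H^{\dag}H$ are $M^{*}M$, $M^{*}N$, $N^{*}M$, $N^{*}N$, whereas $HH^{\dag}=P_{H}$ has blocks $I_{r},0,0,0$ by $(\ref{eq2.3})$; equality forces $N^{*}N=0$, hence $N=0$, and conversely $N=0$ makes $M$ unitary by $(\ref{eq2.2})$, so that all four blocks then agree. For (d), a short computation using $(\ref{eq2.2})$ gives $HH^{*}=U\begin{bmatrix}\Sigma^{2}&0\\0&0\end{bmatrix}U^{*}$, hence the nonzero block row of $HH^{*}H$ is $\Sigma^{3}\begin{bmatrix}M&N\end{bmatrix}$; so $HH^{*}H=H$ reads $(\Sigma^{3}-\Sigma)\begin{bmatrix}M&N\end{bmatrix}=0$, i.e.\ $\Sigma^{3}=\Sigma$, and since $\Sigma$ is a positive diagonal matrix this means $\Sigma=I_{r}$, the converse being trivial. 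For (b), the nonzero block row of $H^{2}$ is $(\Sigma M)\,\Sigma\begin{bmatrix}M&N\end{bmatrix}$, so $H^{2}=H$ becomes $(\Sigma M)\,\Sigma\begin{bmatrix}M&N\end{bmatrix}=\Sigma\begin{bmatrix}M&N\end{bmatrix}$; cancelling the right-invertible factor gives $\Sigma M=I_{r}$, and again the reverse implication is immediate. For (a), the same block square yields $r(H^{2})=r\big((\Sigma M)\Sigma\begin{bmatrix}M&N\end{bmatrix}\big)=r(\Sigma M\Sigma)=r(M)$, using the invertibility of $\Sigma$ and the full row rank of $\begin{bmatrix}M&N\end{bmatrix}$; hence ${\rm Ind}(H)\le1$, i.e.\ $r(H)=r(H^{2})$, holds exactly when $r(M)=r$, i.e.\ when $M$ is nonsingular. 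Finally (c) follows by combining (b) with hermiticity: $H=H^{*}$ forces the $(1,2)$ block $\Sigma N$ to vanish, so $N=0$, and the $(1,1)$ blocks to satisfy $\Sigma M=M^{*}\Sigma$; together with $\Sigma M=I_{r}$ from (b) and $MM^{*}=I_{r}$ from $(\ref{eq2.2})$ this forces $M=\Sigma^{-1}=M^{*}$, whence $\Sigma^{2}=I_{r}$ and so $\Sigma=M=I_{r}$, while conversely $\Sigma=M=I_{r}$, $N=0$ reduces $H$ to $U\begin{bmatrix}I_{r}&0\\0&0\end{bmatrix}U^{*}=P_{H}$, an orthogonal projector.

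I do not expect a genuine obstacle here: each case is a finite block manipulation, and the only step that needs a little care is the right-cancellation, which rests on noticing that $(\ref{eq2.2})$ makes $\begin{bmatrix}M&N\end{bmatrix}$ (and hence $\Sigma\begin{bmatrix}M&N\end{bmatrix}$) right-invertible. Once that observation is in hand, parts (a)--(e) are essentially bookkeeping.
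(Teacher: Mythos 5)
Your proof is correct: the right-cancellation device based on $MM^{*}+NN^{*}=I_{r}$ (so that $\bigl[\,M\ \ N\,\bigr]$ is right-invertible) is sound, and each of the five block computations, including the rank identity $r(H^{2})=r(M)$ in (a) and the deduction $\Sigma^{2}=I_{r}\Rightarrow\Sigma=I_{r}$ from positivity in (c) and (d), checks out. Note that the paper itself gives no proof of this lemma — it is quoted from the cited reference of Baksalary, Styan and Trenkler — so your self-contained verification is exactly the kind of direct block-wise argument that source uses, and there is no substantive difference of method to report.
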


Employing $(\ref{eq2.1})$, $(\ref{eq2.3})$ and $(\ref{eq2.4})$, the CCE-inverse can be represented by the following form.

\begin{theorem}\label{thm4.1}
Assume that $H\in\mathbb{C}^{n\times n}$ has the form of $(\ref{eq2.1})$ and $r(H)=r$. Then
\begin{equation}\label{eq4.1}
H^{C,\textcircled{\dag}}=U \left[\begin{array}{cc}
M^{*}M(\Sigma M)^{\textcircled{\dag}} & 0 \\
N^{*}M(\Sigma M)^{\textcircled{\dag}} & 0 \\
\end{array}
\right] U^{*}.
\end{equation}
\end{theorem}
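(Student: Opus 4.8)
The plan is to obtain $(\ref{eq4.1})$ by a direct block computation from the data already assembled. Recall that the definition of the CCE-inverse gives $H^{C,\textcircled{\dag}}=H^{\dag}H_{1}H^{\dag}=H^{\dag}HH^{\textcircled{\dag}}HH^{\dag}$, and that $(\ref{eq2.1})$, $(\ref{eq2.3})$ and $(\ref{eq2.4})$ express $H$, $H^{\dag}$, $P_{H}=HH^{\dag}$ and $H^{\textcircled{\dag}}$ as $U(\cdot)U^{*}$ with the \emph{same} unitary $U$. Hence the product $H^{\dag}HH^{\textcircled{\dag}}HH^{\dag}$ collapses to the product of the four inner $2\times2$ block matrices sandwiched between $U$ and $U^{*}$, and it only remains to multiply these blocks.

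Concretely, I would first simplify the centre of the product. Since $HH^{\dag}=P_{H}=U\,\mathrm{diag}(I_{r},0)\,U^{*}$ and the nonzero block of $H^{\textcircled{\dag}}$ occupies the top-left $r\times r$ corner, we get $H^{\textcircled{\dag}}HH^{\dag}=H^{\textcircled{\dag}}P_{H}=H^{\textcircled{\dag}}$. Next, using only the top row of the block form of $H$,
\[
HH^{\textcircled{\dag}}HH^{\dag}=U\left[\begin{array}{cc}\Sigma M(\Sigma M)^{\textcircled{\dag}} & 0\\ 0 & 0\end{array}\right]U^{*}.
\]
Finally, premultiplying by $H^{\dag}$ from $(\ref{eq2.3})$ and cancelling $\Sigma^{-1}\Sigma=I_{r}$ (legitimate because $\Sigma=\mathrm{diag}(\sigma_{1},\dots,\sigma_{r})$ has strictly positive diagonal, hence is invertible) gives first column $M^{*}M(\Sigma M)^{\textcircled{\dag}}$ and $N^{*}M(\Sigma M)^{\textcircled{\dag}}$ and zero second column, which is exactly $(\ref{eq4.1})$.

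There is no genuine obstacle here: the statement is a bookkeeping exercise once $(\ref{eq2.3})$ and $(\ref{eq2.4})$ are available, the only point needing a word of care being that $(\ref{eq2.4})$ is invoked rather than re-derived and that $\Sigma$ is invertible. As a consistency check one may note that the rank of the right-hand side of $(\ref{eq4.1})$ equals $r\big((\Sigma M)^{\textcircled{\dag}}\big)=r(H^{k})$, in agreement with Corollary \ref{cor3.7}; alternatively, one could bypass $(\ref{eq2.4})$ entirely by starting from $H^{C,\textcircled{\dag}}=H^{\dag}P_{H^{k}}$ (Theorem \ref{thm3.6}) and computing $P_{H^{k}}$ in Hartwig--Spindelb\"ock coordinates, but the route above is shorter.
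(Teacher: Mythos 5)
Your proposal is correct and follows essentially the same route as the paper: both write $H^{C,\textcircled{\dag}}=H^{\dag}(HH^{\textcircled{\dag}})(HH^{\dag})$ and multiply the block forms from $(\ref{eq2.1})$, $(\ref{eq2.3})$ and $(\ref{eq2.4})$ under the common unitary $U$, cancelling $\Sigma^{-1}\Sigma$ to reach $(\ref{eq4.1})$. Your preliminary simplification $H^{\textcircled{\dag}}P_{H}=H^{\textcircled{\dag}}$ merely reorders the same bookkeeping that the paper carries out by multiplying the three blocks directly.
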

\begin{proof}
By using $(\ref{eq2.1})$, $(\ref{eq2.3})$ and $(\ref{eq2.4})$, we get that
\begin{eqnarray*}
H^{C,\textcircled{\dag}}&=&H^{\dag}H_{1}H^{\dag}\\
&=&U \left[\begin{array}{cc}
M^{*}\Sigma^{-1} & 0 \\
N^{*}\Sigma^{-1} & 0 \\
\end{array}
\right]
\left[\begin{array}{cc}
\Sigma M(\Sigma M)^{\textcircled{\dag}} & 0 \\
0 & 0 \\
\end{array}
\right]
\left[\begin{array}{cc}
I_{r} & 0 \\
0 & 0 \\
\end{array}
\right]
U^{*}\\
&=&U\left[\begin{array}{cc}
M^{*}M(\Sigma M)^{\textcircled{\dag}} & 0 \\
N^{*}M(\Sigma M)^{\textcircled{\dag}} & 0 \\
\end{array}
\right]
 U^{*}.
\end{eqnarray*}
\end{proof}

By the expression of $H^{C,\textcircled{\dag}}$ in Theorem \ref{thm4.1}, we gather the following results.

\begin{theorem}
Let $H\in\mathbb{C}^{n\times n}$, $r(H)=r$. Then
\begin{itemize}
\item[(a)] $H^{C,\textcircled{\dag}}=0\Leftrightarrow H$ is a nilpotent matrix;
\item[(b)] $H^{C,\textcircled{\dag}}=P_{H}\Leftrightarrow H\in\mathbb{C}^{\mathrm{OP}}_{n}$;
\item[(c)] $H^{C,\textcircled{\dag}}=H\Leftrightarrow H\in\mathbb{C}^{\mathrm{EP}}_{n}$ and $H^{3}=H$;
\item[(d)] $H^{C,\textcircled{\dag}}=H^{\#}\Leftrightarrow H\in\mathbb{C}^{\mathrm{EP}}_{n}$;
\item[(e)] $H^{C,\textcircled{\dag}}=H^{\textcircled{\#}}\Leftrightarrow H\in\mathbb{C}^{\mathrm{EP}}_{n}$.
\end{itemize}
\end{theorem}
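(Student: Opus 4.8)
The plan is to prove all five equivalences by reducing each to a statement about the Hartwig–Spindelböck parameters $\Sigma, M, N$ via the canonical form $(\ref{eq4.1})$, and then invoking Lemma \ref{lem2.1} to translate the parameter conditions back into the named matrix classes. Throughout I will work with the block expression
\[
H^{C,\textcircled{\dag}}=U \begin{bmatrix} M^{*}M(\Sigma M)^{\textcircled{\dag}} & 0 \\ N^{*}M(\Sigma M)^{\textcircled{\dag}} & 0 \end{bmatrix} U^{*},
\]
together with the known block forms of $H$, $H^{\dag}$, $P_H$, $H^{\#}$ and $H^{\textcircled{\#}}$ listed before Lemma \ref{lem2.1}. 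Since $\Sigma$ is invertible, the key recurring observation is that $(\Sigma M)^{\textcircled{\dag}}$ has the same rank as $(\Sigma M)^k$, and that $M^{*}M(\Sigma M)^{\textcircled{\dag}}$ controls the whole first block column.

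For part (a), $H^{C,\textcircled{\dag}}=0$ forces $M^{*}M(\Sigma M)^{\textcircled{\dag}}=0$; left-multiplying by $\Sigma M (M^*M)^{\dag}$ (or arguing on ranges, using that $\Sigma$ and $M^*M$ restricted to $\R(M^*)$ are injective) yields $(\Sigma M)^{\textcircled{\dag}}=0$, hence $\Sigma M$ is nilpotent, hence $H$ is nilpotent; the converse is immediate since for a nilpotent matrix $H^k=0$ and $H^{\textcircled{\dag}}=0$, so $H^{C,\textcircled{\dag}}=H^{\dag}P_{H^k}=0$ by Theorem \ref{thm3.6}. For part (b), compare with $P_H=U\,\mathrm{diag}(I_r,0)\,U^*$: equality of the $(2,1)$ blocks gives $N^{*}M(\Sigma M)^{\textcircled{\dag}}=0$ and equality of the $(1,1)$ blocks gives $M^{*}M(\Sigma M)^{\textcircled{\dag}}=I_r$, which forces $(\Sigma M)^{\textcircled{\dag}}$ invertible, hence $\Sigma M$ invertible (index $\le 1$ and full rank), hence $(\Sigma M)^{\textcircled{\dag}}=(\Sigma M)^{-1}$; substituting back, $M^*M(\Sigma M)^{-1}=I_r$ gives $M^*M=\Sigma M$, and then $N^*M(\Sigma M)^{-1}=0$ gives $N^*M=0$; combined with $MM^*+NN^*=I_r$ from $(\ref{eq2.2})$ one deduces $N=0$ and then $\Sigma=M=I_r$, i.e. $H\in\mathbb{C}^{\mathrm{OP}}_n$ by Lemma \ref{lem2.1}(c). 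The converse direction is a direct substitution.

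For parts (c), (d), (e) I would first dispose of (d) and (e) together. If $H\in\mathbb{C}^{\mathrm{EP}}_n$ then $N=0$ by Lemma \ref{lem2.1}(e), so $MM^*=I_r$, $M$ is unitary hence nonsingular, so $\Sigma M$ is nonsingular and $(\Sigma M)^{\textcircled{\dag}}=(\Sigma M)^{\#}=(\Sigma M)^{-1}$; plugging $N=0$ and $M^*M=I_r$ into $(\ref{eq4.1})$ collapses it to $U\,\mathrm{diag}((\Sigma M)^{-1},0)\,U^* = H^{\#}=H^{\textcircled{\#}}$, proving the ``$\Leftarrow$'' of (d) and (e) simultaneously. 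Conversely, $H^{C,\textcircled{\dag}}=H^{\#}$ (or $=H^{\textcircled{\#}}$) requires $H$ to have index $\le 1$ for the right-hand side to be defined in the classical sense; once index $\le 1$, $M$ is nonsingular by Lemma \ref{lem2.1}(a), so $r(H^{C,\textcircled{\dag}})=r(H^k)=r(H)=r$ by Corollary \ref{cor3.7}, hence $H^{C,\textcircled{\dag}}$ has full column rank on its first block, and matching with $H^{\textcircled{\#}}=U\,\mathrm{diag}((\Sigma M)^{-1},0)\,U^*$ forces $N^*M(\Sigma M)^{-1}=0$, i.e. $N^*M=0$, i.e. $N=0$ (as $M$ invertible), giving $H\in\mathbb{C}^{\mathrm{EP}}_n$. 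For part (c), $H^{C,\textcircled{\dag}}=H$ forces the $(1,2)$, $(2,2)$ blocks of $H$ to vanish, i.e. $\Sigma N=0$, so $N=0$ and $H\in\mathbb{C}^{\mathrm{EP}}_n$; then as above $H^{C,\textcircled{\dag}}=U\,\mathrm{diag}((\Sigma M)^{-1},0)\,U^*$ and equality with $H=U\,\mathrm{diag}(\Sigma M,0)\,U^*$ gives $(\Sigma M)^{-1}=\Sigma M$, i.e. $(\Sigma M)^2=I_r$, whence $H^3 = U\,\mathrm{diag}((\Sigma M)^3,0)\,U^* = U\,\mathrm{diag}(\Sigma M,0)\,U^* = H$; the converse is the same computation run backwards, noting $H^3=H$ together with $H\in\mathbb{C}^{\mathrm{EP}}_n$ makes $\Sigma M$ an involution so $(\Sigma M)^{\textcircled{\dag}}=(\Sigma M)^{-1}=\Sigma M$.

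The main obstacle I anticipate is the bookkeeping in the forward directions of (b) and (d)/(e): extracting ``$N=0$'' from an identity that only directly gives ``$N^*M(\Sigma M)^{\textcircled{\dag}}=0$'' requires care about when the trailing factors are injective, and in (b) one must additionally chase the constraint $MM^*+NN^*=I_r$ to upgrade $N^*M=0$ to $N=0$ and then to $\Sigma=M=I_r$. Everything else is routine block multiplication, and the ``$\Leftarrow$'' directions are immediate substitutions once $N=0$ is known to collapse $(\Sigma M)^{\textcircled{\dag}}$ to $(\Sigma M)^{-1}$.
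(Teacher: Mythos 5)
Your proposal is correct and follows essentially the same route as the paper: both work from the Hartwig--Spindelb\"ock canonical form of $H^{C,\textcircled{\dag}}$ in Theorem \ref{thm4.1}, compare blocks with the known forms of $P_H$, $H$, $H^{\#}$, $H^{\textcircled{\#}}$, and translate the resulting conditions on $\Sigma, M, N$ via Lemma \ref{lem2.1} and the identity $MM^{*}+NN^{*}=I_{r}$. The only (harmless) differences are cosmetic: in (a) you extract $M(\Sigma M)^{\textcircled{\dag}}=0$ from the $(1,1)$ block alone rather than combining both blocks as the paper does, and in (d)/(e) you make explicit the index-$\leq 1$ existence point that the paper leaves implicit.
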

\begin{proof}
(a). By using the Theorem \ref{thm4.1}, we get that
$$H^{C,\textcircled{\dag}}=0\Leftrightarrow
M^{*}M(\Sigma M)^{\textcircled{\dag}}=0\ \ {\rm and} \ \
N^{*}M(\Sigma M)^{\textcircled{\dag}}=0.
$$
Combining them with $MM^{*}+NN^{*}=I_{r}$ leads to $ M(\Sigma M)^{\textcircled{\dag}}=0.$ Then $(\Sigma M)^{\textcircled{\dag}}=0,$ which means that
 $H^{\textcircled{\dag}}=0.$ Therefore, $H$ is a nilpotent matrix.

 Conversely, it is evident.

(b). ``$\Leftarrow$''. If $H\in\mathbb{C}^{\mathrm{OP}}_{n}$, then
$$H=U \left[\begin{array}{cc}
I_{r} & 0 \\
0 & 0 \\
\end{array}
\right]U^{*},$$
Thus, it is easy to see that $H^{C,\textcircled{\dag}}=P_{H}$.

``$\Rightarrow$''. Assume that $H^{C,\textcircled{\dag}}=P_{H}$. By Theorem \ref{thm4.1} and $(\ref{eq2.3})$, we get that
$$U \left[\begin{array}{cc}
M^{*}M(\Sigma M)^{\textcircled{\dag}}& 0 \\
N^{*}M(\Sigma M)^{\textcircled{\dag}}& 0 \\
\end{array}
\right]U^{*}=U \left[\begin{array}{cc}
I_{r}& 0 \\
0& 0 \\
\end{array}
\right]U^{*}.$$
Thus,
\begin{equation}\label{eq4.3}
M^{*}M(\Sigma M)^{\textcircled{\dag}}=I_{r},\ \ \ N^{*}M(\Sigma M)^{\textcircled{\dag}}=0.
\end{equation}
Due to $MM^{*}+NN^{*}=I_{r}$, it follows from $(\ref{eq4.3})$ that
$N=0,\;\ \Sigma M=I_{r}$.
Hence,
$$H=U \left[\begin{array}{cc}
I_{r}& 0 \\
0& 0 \\
\end{array}
\right]U^{*}\in\mathbb{C}^{\mathrm{OP}}_{n}.$$

(c). ``$\Leftarrow$''. If $H\in\mathbb{C}^{\mathrm{EP}}_{n}$ and $H^{3}=H$, by $(\ref{eq2.1})$ and Lemma \ref{lem2.1}, we get that
$$H=U \left[\begin{array}{cc}
\Sigma M& 0 \\
0& 0 \\
\end{array}
\right]U^{*}, \ \ \ (\Sigma M)^{2}=I_{r}.$$
Thus, $H^{C,\textcircled{\dag}}=H$ is evident.

``$\Rightarrow$''. Let $H^{C,\textcircled{\dag}}=H$. by $(\ref{eq2.1})$ and Theorem \ref{thm4.1}, we derive that
$$U \left[\begin{array}{cc}
M^{*}M(\Sigma M)^{\textcircled{\dag}}& 0 \\
N^{*}M(\Sigma M)^{\textcircled{\dag}}& 0 \\
\end{array}
\right]U^{*}=U \left[\begin{array}{cc}
\Sigma M& \Sigma N \\
0 & 0 \\
\end{array}
\right]U^{*},$$
which implies that
$$N=0, \ \ \ M^{*}M=I_{r}, \ \ \ (\Sigma M)^{\textcircled{\dag}}=(\Sigma M)^{-1}=\Sigma M.$$
Thus, we have $H^{3}=H$ and $H\in\mathbb{C}^{\mathrm{EP}}_{n}.$

(d). Using $(\ref{eq2.1})$, $(\ref{eq2.4})$ and Theorem \ref{thm4.1}, we show that $H^{C,\textcircled{\dag}}=H^{\#}$ if and only if $N=0$. Therefore, $H^{C,\textcircled{\dag}}=H^{\#}\Leftrightarrow H\in\mathbb{C}^{\mathrm{EP}}_{n}.$

(e). Using the same argument as in the proof of (d), the result can be easily carried out.
\end{proof}

\subsection{The second canonical form derived from the core-EP decompostion}

In Section 2, we have introduced the CEP-decomposition. In fact, several generalized inverses have the following expressions by utilizing the CEP-decomposition.

\begin{lemma}\label{lem2.2} {\rm \cite[Theorem 3.2]{w}}
 Let $H\in\mathbb{C}^{n\times n}_k$ be decomposed by $(\ref{eq2.5})$. Then $H^{\textcircled{\dag}}=H^{\textcircled{\#}}_{1}$. Furthermore
 \begin{equation}\label{eq2.6}
 H^{\textcircled{\dag}}=U \left[\begin{array}{cc}
T^{-1} & 0 \\
0 & 0 \\
\end{array}
\right] U^{*}.
\end{equation}
\end{lemma}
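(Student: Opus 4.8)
The plan is to prove Lemma~\ref{lem2.2}, namely that for $H\in\mathbb{C}^{n\times n}_k$ with CEP-decomposition $(\ref{eq2.5})$ one has $H^{\textcircled{\dag}}=H_1^{\textcircled{\#}}$ and the block form $(\ref{eq2.6})$. First I would observe that since $H_1=U\left[\begin{smallmatrix} T & P \\ 0 & 0 \end{smallmatrix}\right]U^{*}$ with $T$ nonsingular, the block column space and row structure give $\mathrm{Ind}(H_1)\le 1$, so the core inverse $H_1^{\textcircled{\#}}$ is defined; it is characterized (as recalled in the introduction) as the unique $X$ with $H_1X=P_{H_1}$ and $\mathcal{R}(X)\subseteq\mathcal{R}(H_1)$. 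Since $H^{\textcircled{\dag}}$ is the unique $X$ satisfying $XHX=X$ and $\mathcal{R}(X)=\mathcal{R}(X^{*})=\mathcal{R}(H^{k})$, the strategy is to produce an explicit candidate matrix in the $U$-conjugated block form and verify it satisfies the defining equations of the core-EP inverse, then separately check it equals $H_1^{\textcircled{\#}}$.

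The key computation is the following. From $T$ nonsingular, a direct calculation shows $H_1^{\textcircled{\#}}=U\left[\begin{smallmatrix} T^{-1} & 0 \\ 0 & 0 \end{smallmatrix}\right]U^{*}$: indeed $H_1\cdot U\left[\begin{smallmatrix} T^{-1} & 0 \\ 0 & 0 \end{smallmatrix}\right]U^{*}=U\left[\begin{smallmatrix} I_r & 0 \\ 0 & 0 \end{smallmatrix}\right]U^{*}=P_{H_1}$ (using that $\mathcal{R}(H_1)$ is the span of the first $r$ coordinate directions after the $U$-rotation, where $r=r(H^k)$), and the range of $U\left[\begin{smallmatrix} T^{-1} & 0 \\ 0 & 0 \end{smallmatrix}\right]U^{*}$ is contained in that of $H_1$. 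So set $X=U\left[\begin{smallmatrix} T^{-1} & 0 \\ 0 & 0 \end{smallmatrix}\right]U^{*}$. Then I would verify $XHX=X$ by the block multiplication $\left[\begin{smallmatrix} T^{-1} & 0 \\ 0 & 0 \end{smallmatrix}\right]\left[\begin{smallmatrix} T & P \\ 0 & Q \end{smallmatrix}\right]\left[\begin{smallmatrix} T^{-1} & 0 \\ 0 & 0 \end{smallmatrix}\right]=\left[\begin{smallmatrix} I_r & T^{-1}P \\ 0 & 0 \end{smallmatrix}\right]\left[\begin{smallmatrix} T^{-1} & 0 \\ 0 & 0 \end{smallmatrix}\right]=\left[\begin{smallmatrix} T^{-1} & 0 \\ 0 & 0 \end{smallmatrix}\right]$, and I would check $\mathcal{R}(X)=\mathcal{R}(X^{*})=\mathcal{R}(H^{k})$ using that $H^{k}=U\left[\begin{smallmatrix} T^{k} & * \\ 0 & 0 \end{smallmatrix}\right]U^{*}$ (the lower-right block is $Q^{k}=0$), so $\mathcal{R}(H^{k})$ is exactly the $U$-image of the first $r$ coordinates, which is visibly both $\mathcal{R}(X)$ and $\mathcal{R}(X^{*})$ since $X$ is $U$-conjugate to a block-diagonal matrix with invertible top-left block.

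By the uniqueness in the definition of the core-EP inverse, these verifications force $H^{\textcircled{\dag}}=X=U\left[\begin{smallmatrix} T^{-1} & 0 \\ 0 & 0 \end{smallmatrix}\right]U^{*}$, which is $(\ref{eq2.6})$, and comparing with the displayed formula for $H_1^{\textcircled{\#}}$ gives $H^{\textcircled{\dag}}=H_1^{\textcircled{\#}}$. I do not anticipate a genuine obstacle here; the only point needing a little care is the claim that $r(T)=r(H^k)$ entails $\mathcal{R}(H^k)=\mathcal{R}\big(U\left[\begin{smallmatrix} I_r & 0 \\ 0 & 0\end{smallmatrix}\right]U^{*}\big)$, i.e. that powering $H$ annihilates the nilpotent part and leaves a block with the same column space as $H_1$; this follows from $Q^{k}=0$ together with $\mathrm{Ind}(H)=k$ and the fact that $\left[\begin{smallmatrix} T & P \\ 0 & Q\end{smallmatrix}\right]^{k}=\left[\begin{smallmatrix} T^{k} & \sum_{i=0}^{k-1}T^{i}PQ^{k-1-i} \\ 0 & 0\end{smallmatrix}\right]$ has top-left block $T^{k}$ invertible. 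Alternatively, one may simply cite \cite[Theorem 2.1]{w} for the properties of the CEP-decomposition already quoted in Section~2 and then perform the block verification above, which is the route I would take to keep the argument short.
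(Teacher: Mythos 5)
Your proof is correct. Note that the paper itself offers no proof of this lemma---it is quoted directly from Theorem 3.2 of \cite{w}---and your argument (computing $H_1^{\textcircled{\#}}$ from the defining pair $H_1X=P_{H_1}$, $\mathcal{R}(X)\subseteq\mathcal{R}(H_1)$, then checking the same block matrix $U\left[\begin{smallmatrix} T^{-1} & 0 \\ 0 & 0 \end{smallmatrix}\right]U^{*}$ satisfies $XHX=X$ and $\mathcal{R}(X)=\mathcal{R}(X^{*})=\mathcal{R}(H^{k})$, and invoking uniqueness) is exactly the standard verification underlying the cited result, so it matches the intended proof in all essentials.
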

 \begin{lemma}\label{lem2.3}{\rm \cite{flt3,wd2}}
Let $H\in\mathbb{C}^{n\times n}_k$ be decomposed by $(\ref{eq2.5})$. Then
\begin{equation}\label{eq2.7}
H^{D}=U \left[\begin{array}{cc}
T^{-1} & (T^{k+1})^{-1}\widetilde{T} \\
0 & 0 \\
\end{array}
\right] U^{*},
\end{equation}
where $\widetilde{T}=\sum\limits_{j=0}^{k-1}T^{j}PQ^{k-1-j}$. Furthermore, $\widetilde{T}=0$ if and only if $P=0$.
\end{lemma}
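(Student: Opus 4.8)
The plan is to work entirely inside the core-EP decomposition $(\ref{eq2.5})$, where $H = U\begin{bmatrix} T & P \\ 0 & Q\end{bmatrix}U^*$ with $T$ nonsingular and $Q$ nilpotent of index $k$. The formula $(\ref{eq2.7})$ for $H^D$ is a standard block-upper-triangular computation: since $H^D$ commutes with $H$, has the same range as $H^k$, and $H^D H^{k+1} = H^k$, one writes $H^D = U\begin{bmatrix} X & Y \\ 0 & 0\end{bmatrix}U^*$ (the bottom row is zero because $\mathcal{R}((H^D)^*) = \mathcal{R}((H^k)^*) = \mathcal{R}(H_1^*)$ by the index-$1$ structure of $H_1$), and then solves for $X$ and $Y$ from the Drazin identities.

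First I would compute $H^k$. Since $H$ is block upper triangular with diagonal blocks $T$ and $Q$, and $Q^k = 0$, one gets $H^k = U\begin{bmatrix} T^k & W \\ 0 & 0\end{bmatrix}U^*$ for some $W$; a short induction on the powers $H^{j}$ gives $W = \sum_{j=0}^{k-1} T^{j} P Q^{k-1-j}$, which is exactly $\widetilde T$. So $H^k = U\begin{bmatrix} T^k & \widetilde T \\ 0 & 0\end{bmatrix}U^*$. Next, writing the ansatz $H^D = U\begin{bmatrix} X & Y \\ 0 & 0\end{bmatrix}U^*$, the condition $H^D H = H H^D$ forces (comparing blocks) $XT = TX$, hence combined with $H H^D H = H$ one obtains $X = T^{-1}$; then comparing the off-diagonal blocks in $H^D H^{k+1} = H^k$ (using $H^{k+1} = U\begin{bmatrix} T^{k+1} & T\widetilde T \\ 0 & 0\end{bmatrix}U^*$, which follows from $H^k$ and one more multiplication by $H$) yields $T^{-1}\widetilde T + Y\cdot 0 = $ ... more carefully, $X T^{k+1} + Y\cdot 0 = T^k$ recovers $X=T^{-1}$ again, while the block equation from commutativity plus $H^D H^{k+1}=H^k$ pins down $Y = (T^{k+1})^{-1}\widetilde T$. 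Then I would double-check by verifying directly that $U\begin{bmatrix} T^{-1} & (T^{k+1})^{-1}\widetilde T \\ 0 & 0\end{bmatrix}U^*$ satisfies all three defining Drazin identities — this verification is routine block arithmetic using only $TT^{-1}=I$ and $Q^k=0$.

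For the final sentence, $\widetilde T = 0 \iff P = 0$: the implication $P = 0 \Rightarrow \widetilde T = 0$ is immediate since every term of $\sum_{j=0}^{k-1} T^j P Q^{k-1-j}$ carries a factor $P$. For the converse, I would isolate the $j = k-1$ term: $\widetilde T = T^{k-1}P + \sum_{j=0}^{k-2} T^j P Q^{k-1-j}$. Every term in the remaining sum has a factor $Q^{m}$ with $m \geq 1$. The cleanest route is to postmultiply the relation $\widetilde T = 0$ by a suitable power of $Q$, or better, to use the known fact (Lemma \ref{lem2.3} is cited to \cite{flt3,wd2}, so this can simply be quoted) — but if one wants it self-contained, postmultiply by $Q^{k-1}$: all terms with a positive power of $Q$ of total degree $\geq 1$ summed with $Q^{k-1}$ give powers $Q^{k}, Q^{k+1},\dots$ which vanish, leaving $T^{k-1} P Q^{k-1} = 0$, hence $P Q^{k-1} = 0$ ($T$ invertible); then postmultiply the original by $Q^{k-2}$ and use $PQ^{k-1}=0$ to get $PQ^{k-2}=0$, and iterate downward to conclude $P = P Q^0 = 0$.

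The main obstacle is purely bookkeeping: getting the off-diagonal block $\widetilde T$ and its companion $(T^{k+1})^{-1}\widetilde T$ right, since the powers $H^j$ produce telescoping-looking but not actually telescoping sums $\sum_{i=0}^{j-1} T^i P Q^{j-1-i}$, and one must track the index carefully through the induction and through the multiplication $H^D \cdot H^{k+1}$. There is no conceptual difficulty — the block-triangular structure with $T$ invertible and $Q$ nilpotent makes everything determined — so the proof is a verification that the displayed matrix works, plus the short descending induction for the $\widetilde T = 0 \iff P = 0$ equivalence.
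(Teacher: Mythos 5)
Your proposal is essentially correct, but note that the paper does not prove this lemma at all: it is quoted from the cited references \cite{flt3,wd2}, so your argument is a self-contained reconstruction rather than a variant of an argument in the text. The reconstruction works: the induction giving $H^{k}=U\bigl[\begin{smallmatrix}T^{k} & \widetilde{T}\\ 0 & 0\end{smallmatrix}\bigr]U^{*}$ and $H^{k+1}=U\bigl[\begin{smallmatrix}T^{k+1} & T\widetilde{T}\\ 0 & 0\end{smallmatrix}\bigr]U^{*}$ is right, the displayed candidate does satisfy $XHX=X$, $HX=XH$ and $XH^{k+1}=H^{k}$ (the commutation check reduces to $T\widetilde{T}=T^{k}P+\widetilde{T}Q$, which holds because the $j=0$ term of $\widetilde{T}Q$ is $PQ^{k}=0$), and uniqueness of the Drazin inverse then finishes it, so your closing direct verification is the load-bearing step and is sound. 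Your descending induction for $\widetilde{T}=0\Rightarrow P=0$ (postmultiplying by $Q^{k-1},Q^{k-2},\dots$ and peeling off $PQ^{k-1}=0$, $PQ^{k-2}=0,\dots$, using that $T$ is invertible) is also correct. One slip worth fixing: the justification you give for the zero bottom row of the ansatz, namely $\mathcal{R}((H^{D})^{*})=\mathcal{R}((H^{k})^{*})$, does not yield it, since $\mathcal{R}((H^{k})^{*})$ is spanned by the columns of $\bigl[\begin{smallmatrix}(T^{k})^{*}\\ \widetilde{T}^{*}\end{smallmatrix}\bigr]$ and lies in the first $t$ coordinates only when $\widetilde{T}=0$; the correct reason is the column-space condition $\mathcal{R}(H^{D})=\mathcal{R}(H^{k})$, which under conjugation by $U$ is the span of the first $t$ coordinates because $T^{k}$ is invertible. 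Also, as you half-notice, $H^{D}H^{k+1}=H^{k}$ alone does not determine $Y$; you need the commutation relation $TY=T^{-1}P+YQ$, a Sylvester equation whose unique solution (as $Q$ is nilpotent) is $Y=\sum_{j=0}^{k-1}T^{-j-2}PQ^{j}=(T^{k+1})^{-1}\widetilde{T}$. Neither point damages the proof, since the verification-plus-uniqueness route bypasses the derivation of the ansatz.
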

 \begin{lemma}{\rm \cite{Dd}}
Let $H\in\mathbb{C}^{n\times n}_k$ have the form of $(\ref{eq2.5})$. Then
\begin{equation}\label{eq2.8}
H^{\dag}=U \left[\begin{array}{cc}
T^{*}\Lambda & -T^{*}\Lambda PQ^{\dag} \\
(I_{n-t}-Q^{\dag}Q)P^{*}\Lambda & Q^{\dag}-(I_{n-t}-Q^{\dag}Q)P^{*}\Lambda PQ^{\dag} \\
\end{array}
\right] U^{*},
\end{equation}
where $\Lambda=(TT^{*}+PP^{*}-PQ^{\dag}QP^{*})^{-1}$, $t=r(H^{k})$. Moreover,
\begin{equation}\label{eq2.9}
 HH^{\dag}=U \left[\begin{array}{cc}
I_{t} & 0 \\
0 & QQ^{\dag} \\
\end{array}
\right] U^{*},
\end{equation}
\begin{equation}
H^{\dag}H=U \left[\begin{array}{cc}
T^{*}\Lambda T & T^{*}\Lambda P(I_{n-t}-Q^{\dag}Q) \\
(I_{n-t}-Q^{\dag}Q)P^{*}\Lambda T & Q^{\dag}Q+(I_{n-t}-Q^{\dag}Q)P^{*}\Lambda P(I_{n-t}-Q^{\dag}Q) \\
\end{array}
\right] U^{*}.
\end{equation}
\end{lemma}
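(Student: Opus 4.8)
The plan is to conjugate away the unitary $U$ and then verify the four Penrose conditions by a block computation. Put $A=\begin{bmatrix}T&P\\0&Q\end{bmatrix}$ and $E=I_{n-t}-Q^{\dag}Q$, so that $H=UAU^{*}$, hence $H^{\dag}=UA^{\dag}U^{*}$, $HH^{\dag}=U(AA^{\dag})U^{*}$ and $H^{\dag}H=U(A^{\dag}A)U^{*}$. It therefore suffices to show $A^{\dag}=X$, where
$$X=\begin{bmatrix}T^{*}\Lambda & -T^{*}\Lambda PQ^{\dag}\\ EP^{*}\Lambda & Q^{\dag}-EP^{*}\Lambda PQ^{\dag}\end{bmatrix},$$
so that the right-hand side of $(\ref{eq2.8})$ is $UXU^{*}$, and then to multiply blocks to obtain $AA^{\dag}=AX$ and $A^{\dag}A=XA$. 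First I would record that $\Lambda$ is well defined and Hermitian: since $E=E^{*}=E^{2}$ one has $PP^{*}-PQ^{\dag}QP^{*}=PEP^{*}=(PE)(PE)^{*}\succeq 0$, so $\Lambda^{-1}=TT^{*}+PEP^{*}$ is Hermitian positive definite, hence invertible, and $\Lambda^{*}=\Lambda$. (One way to motivate this $\Lambda$: take a full-rank factorization $Q=FG$; then $A=\begin{bmatrix}I_{t}&0\\0&F\end{bmatrix}\begin{bmatrix}T&P\\0&G\end{bmatrix}$ is a full-rank factorization of $A$, and computing $A^{\dag}$ from it produces the Schur complement $TT^{*}+PP^{*}-PG^{*}(GG^{*})^{-1}GP^{*}=\Lambda^{-1}$, using $G^{*}(GG^{*})^{-1}G=Q^{\dag}Q$; but for the write-up the direct verification is shorter.)

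The only facts needed in the verification are: $T$ is invertible; the Penrose identities for $Q^{\dag}$, in particular $QQ^{\dag}Q=Q$, $Q^{\dag}QQ^{\dag}=Q^{\dag}$, and $(QQ^{\dag})^{*}=QQ^{\dag}$, $(Q^{\dag}Q)^{*}=Q^{\dag}Q$; the derived identity $QE=Q-QQ^{\dag}Q=0$; the Hermicity $\Lambda^{*}=\Lambda$; and the defining relation $(TT^{*}+PEP^{*})\Lambda=I_{t}$. I would first compute $AX$ block by block: using $PP^{*}-PQ^{\dag}QP^{*}=PEP^{*}$, the $(1,1)$ block collapses to $(TT^{*}+PEP^{*})\Lambda=I_{t}$; the $(1,2)$ block to $PQ^{\dag}-(TT^{*}+PEP^{*})\Lambda PQ^{\dag}=0$; the $(2,1)$ block is $QEP^{*}\Lambda=0$ because $QE=0$; and the $(2,2)$ block is $QQ^{\dag}-QEP^{*}\Lambda PQ^{\dag}=QQ^{\dag}$. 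Hence $AX=\begin{bmatrix}I_{t}&0\\0&QQ^{\dag}\end{bmatrix}$, which is Hermitian, giving Penrose equation $(3)$; moreover $AXA=\begin{bmatrix}T&P\\0&QQ^{\dag}Q\end{bmatrix}=A$ gives $(1)$, and $XAX=X\begin{bmatrix}I_{t}&0\\0&QQ^{\dag}\end{bmatrix}=X$ (the lower blocks using $Q^{\dag}QQ^{\dag}=Q^{\dag}$) gives $(2)$. A similar multiplication yields $XA=\begin{bmatrix}T^{*}\Lambda T & T^{*}\Lambda PE\\ EP^{*}\Lambda T & Q^{\dag}Q+EP^{*}\Lambda PE\end{bmatrix}$, which is Hermitian because $\Lambda$ and $E$ are, giving $(4)$. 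By uniqueness of the Moore–Penrose inverse, $A^{\dag}=X$, so $H^{\dag}=UXU^{*}$ is $(\ref{eq2.8})$; and for the ``moreover'' part, $HH^{\dag}=U(AX)U^{*}$ and $H^{\dag}H=U(XA)U^{*}$ are precisely the block matrices $AX$ and $XA$ just computed, which after rewriting $E=I_{n-t}-Q^{\dag}Q$ inside the entries are $(\ref{eq2.9})$ and the displayed formula for $H^{\dag}H$.

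I do not anticipate a real obstacle here — the whole argument is a structured block-matrix verification resting on the identities listed above. The one place that needs care is the simplification of $AX$: one must consistently replace $PP^{*}-PQ^{\dag}QP^{*}$ by $PEP^{*}$, recognise $TT^{*}+PEP^{*}=\Lambda^{-1}$, and invoke the sharper identity $QE=0$ (not merely $QQ^{\dag}Q=Q$) to annihilate the $(2,1)$ block. Once $AX$ and $XA$ are in hand, the four Penrose equations, and hence also the ``moreover'' formulas for $HH^{\dag}$ and $H^{\dag}H$, drop out immediately.
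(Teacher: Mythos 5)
Your verification is correct: with $A=\left[\begin{smallmatrix}T&P\\0&Q\end{smallmatrix}\right]$ and $E=I_{n-t}-Q^{\dag}Q$, the blocks of $AX$ and $XA$ simplify exactly as you claim (the identities $PP^{*}-PQ^{\dag}QP^{*}=PEP^{*}$, $QE=0$, $Q^{\dag}QQ^{\dag}=Q^{\dag}$ and the Hermiticity of $\Lambda$, $E$, $QQ^{\dag}$, $Q^{\dag}Q$ are all that is needed), so the four Penrose equations hold, uniqueness gives $A^{\dag}=X$, and conjugation by $U$ yields (\ref{eq2.8}), while $AX$ and $XA$ are precisely the stated $HH^{\dag}$ and $H^{\dag}H$. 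The paper itself does not prove this lemma at all: it is quoted from Deng and Du \cite{Dd}, whose derivation goes through their general representation of the Moore--Penrose inverse of $2\times 2$ block (operator) matrices. Your route is therefore genuinely different in character: a direct, elementary, self-contained check of the Penrose conditions, which also has the advantage of producing the ``moreover'' formulas for $HH^{\dag}$ and $H^{\dag}H$ as free byproducts of the computation of $AX$ and $XA$, and your preliminary observation that $\Lambda^{-1}=TT^{*}+PEP^{*}$ is Hermitian positive definite (so $\Lambda$ exists and is Hermitian) correctly supplies the one well-definedness point the cited statement leaves implicit. The citation-based approach buys brevity and generality (operator setting); yours buys transparency and independence from the reference.
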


Utilizing the above expressions of generalized inverses, several characterizations of the sets consisting of special matrices were introduced as follows.

\begin{lemma}\label{lem2.5}{\rm \cite{flt3}}
Let $H\in\mathbb{C}^{n\times n}_{k}$ be given by $(\ref{eq2.5})$. Then
\begin{itemize}
\item[(a)] $H\in\mathbb{C}^{i-EP}_{n}\Leftrightarrow H\in\mathbb{C}^{k,\textcircled{\dag}}_{n}\Leftrightarrow P=0$;
\item[(b)] $H\in\mathbb{C}^{k-EP}_{n}\Leftrightarrow H\in\mathbb{C}^{k,C\dag}_{n}\Leftrightarrow P=PQ^{\dag}Q$ and $\widetilde{T}=\widetilde{T}QQ^{\dag}$, where $\widetilde{T}=\sum\limits_{j=0}^{k-1}T^{j}PQ^{k-j}$;
\item[(c)] $H\in\mathbb{C}^{k,D\dag}_{n}\Leftrightarrow \widetilde{T}=\widetilde{T}QQ^{\dag}$;
\item[(d)] $H\in\mathbb{C}^{k,\dag D}_{n}\Leftrightarrow P=PQ^{\dag}Q$;
\item[(e)] $H\in\mathbb{C}^{k,\textcircled{w}}_{n}\Leftrightarrow PQ=0$.
\end{itemize}
\end{lemma}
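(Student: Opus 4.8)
The plan is to prove each equivalence by transporting the defining commutation relation into the core-EP coordinates of $(\ref{eq2.5})$ and reading off the identities it forces among $T$, $P$, $Q$. Writing $t=r(H^{k})$ and using $Q^{k}=0$, one has
$$H^{k}=U\left[\begin{array}{cc}T^{k}&\widetilde{T}\\0&0\end{array}\right]U^{*},\qquad \widetilde{T}:=\sum_{j=0}^{k-1}T^{j}PQ^{k-1-j},$$
the matrix $\widetilde{T}$ of Lemma \ref{lem2.3}. I would begin by recording the core-EP block forms $(\ref{eq2.6})$, $(\ref{eq2.7})$, $(\ref{eq2.8})$ of $H^{\textcircled{\dag}}$, $H^{D}$, $H^{\dag}$, together with $H^{\textcircled{w}}=(H^{\textcircled{\dag}})^{2}H=U\left[\begin{smallmatrix}T^{-1}&T^{-2}P\\0&0\end{smallmatrix}\right]U^{*}$, and by computing the products $H^{k}H^{\dag}$, $H^{\dag}H^{k}$ and the block forms of $H^{D,\dag}=(HH^{D})H^{\dag}$ and $H^{\dag,D}=H^{\dag}(HH^{D})$ once and for all. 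Throughout, the simplifications rest on $QQ^{\dag}Q=Q$, $Q^{\dag}QQ^{\dag}=Q^{\dag}$, on $\Lambda^{-1}=TT^{*}+P(I-Q^{\dag}Q)P^{*}$ (so that $T^{*}\Lambda T=I\Leftrightarrow P(I-Q^{\dag}Q)=0$), on the ``absorption'' identity $T^{k}(T^{*}\Lambda)+\widetilde{T}(I-Q^{\dag}Q)P^{*}\Lambda=T^{k-1}$, and on the fact that $XX^{*}=0$ implies $X=0$.

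For part (a): from $(\ref{eq2.6})$, $H^{k}H^{\textcircled{\dag}}=U\left[\begin{smallmatrix}T^{k-1}&0\\0&0\end{smallmatrix}\right]U^{*}$ and $H^{\textcircled{\dag}}H^{k}=U\left[\begin{smallmatrix}T^{k-1}&T^{-1}\widetilde{T}\\0&0\end{smallmatrix}\right]U^{*}$, so $H\in\mathbb{C}^{k,\textcircled{\dag}}_{n}\Leftrightarrow\widetilde{T}=0$; and since $H^{k}(H^{k})^{\dag}$, $(H^{k})^{\dag}H^{k}$ are the orthogonal projectors onto $\mathcal{R}(H^{k})=\mathcal{R}(U\left[\begin{smallmatrix}I_{t}\\0\end{smallmatrix}\right])$ and onto $\mathcal{R}((H^{k})^{*})=\mathcal{R}(U\left[\begin{smallmatrix}(T^{k})^{*}\\\widetilde{T}^{*}\end{smallmatrix}\right])$, the $i$-EP identity says those two ranges coincide, \ie $\widetilde{T}=0$. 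Finally $\widetilde{T}=0\Leftrightarrow P=0$: one direction is trivial, and for the other, left-multiplying $\widetilde{T}=0$ by $T^{-(k-1)}$ gives $\sum_{\ell=0}^{k-1}T^{-\ell}PQ^{\ell}=0$, and right-multiplying this successively by $Q^{k-1},Q^{k-2},\dots,Q,I$ kills all but one summand at each stage, forcing $PQ^{k-1}=0$, then $PQ^{k-2}=0$, \dots, and finally $P=0$.

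Parts (c), (d), (e) run on the same template with $H^{D,\dag}$, $H^{\dag,D}$, $H^{\textcircled{w}}$ in place of $H^{\textcircled{\dag}}$: in each case one equates corresponding blocks of $H^{k}G$ and $GH^{k}$, cancels the invertible powers of $T$, and uses the telescoping identity $(T\widetilde{T}-T^{k}P)Q^{\dag}=\widetilde{T}QQ^{\dag}$ together with the downward induction of part (a); this delivers $\widetilde{T}=\widetilde{T}QQ^{\dag}$ for (c), $P=PQ^{\dag}Q$ for (d) (from $T^{*}\Lambda T=I$ and the ensuing $P(I-Q^{\dag}Q)=0$), and $PQ=0$ for (e). Part (b) then follows cheaply: since the CMP, DMP and dual DMP relations give $HH^{C,\dag}=\widetilde{H}_{1}H^{\dag}=HH^{D,\dag}$, $H^{C,\dag}H=H^{\dag}\widetilde{H}_{1}=H^{\dag,D}H$ (with $\widetilde{H}_{1}=HH^{D}H$), $H^{k}H^{D,\dag}=H^{k}H^{\dag}$ and $H^{\dag,D}H^{k}=H^{\dag}H^{D}H^{k+1}=H^{\dag}H^{k}$, the three differences $H^{k}H^{C,\dag}-H^{C,\dag}H^{k}$, $H^{k}H^{D,\dag}-H^{\dag,D}H^{k}$ and $H^{k}H^{\dag}-H^{\dag}H^{k}$ all coincide; equating this common matrix to zero blockwise yields exactly ``$P=PQ^{\dag}Q$ and $\widetilde{T}=\widetilde{T}QQ^{\dag}$'', so that $\mathbb{C}^{k-EP}_{n}=\mathbb{C}^{k,C\dag}_{n}=\mathbb{C}^{k,D\dag}_{n}\cap\mathbb{C}^{k,\dag D}_{n}$.

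The step I expect to be the main obstacle is the block arithmetic around $H^{\dag}$ in the core-EP coordinates: the form $(\ref{eq2.8})$ carries the factor $\Lambda=(TT^{*}+PP^{*}-PQ^{\dag}QP^{*})^{-1}$ and several copies of the compressions $I-Q^{\dag}Q$ and $QQ^{\dag}$, so the computations of $H^{k}H^{\dag}$ and of the $(1,1)$- and $(1,2)$-blocks of $H^{D,\dag}$, $H^{\dag,D}$ require the absorption identity above and a careful use of $QQ^{\dag}Q=Q$ to collapse them; once these are in hand, every part of the lemma is a short computation, and (b) reduces to (c) and (d) with essentially no extra work.
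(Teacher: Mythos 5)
The paper never proves Lemma \ref{lem2.5}; it is quoted verbatim from Ferreyra--Levis--Thome \cite{flt3}. Your self-contained verification in core-EP coordinates is correct, and it uses precisely the block formulas the paper itself relies on later ((\ref{eq2.6})--(\ref{eq2.8}), and the expressions (\ref{eq4.4}) and (\ref{eq4.5}) of $H^{\dag,D}$ and $H^{C,\dag}$), so it fits the paper's toolkit while filling in the omitted argument. The key steps all check out: the absorption identity $T^{k}T^{*}\Lambda+\widetilde{T}(I-Q^{\dag}Q)P^{*}\Lambda=T^{k-1}$ holds because $Q^{m}(I-Q^{\dag}Q)=0$ for $m\geq 1$; the telescoping identity $T\widetilde{T}-T^{k}P=\widetilde{T}Q$ identifies the $(1,2)$ block of $H^{k}H^{\dag}$ as $T^{-1}\widetilde{T}QQ^{\dag}$, which together with the $(2,1)$ block $(I-Q^{\dag}Q)P^{*}\Lambda T^{k}$ gives exactly ``$P=PQ^{\dag}Q$ and $\widetilde{T}=\widetilde{T}QQ^{\dag}$''; and your downward induction ($PQ^{k-1}=0$, then $PQ^{k-2}=0$, \dots) correctly yields $\widetilde{T}=0\Leftrightarrow P=0$, a fact already recorded in Lemma \ref{lem2.3}. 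The reduction of part (b) to a single commutator, via $HH^{C,\dag}=HH^{D,\dag}$ and $H^{C,\dag}H=H^{\dag,D}H$ so that $H^{k}H^{C,\dag}-H^{C,\dag}H^{k}=H^{k}H^{\dag}-H^{\dag}H^{k}$, is an economical touch that also makes $\mathbb{C}^{k-\mathrm{EP}}_{n}=\mathbb{C}^{k,C\dag}_{n}=\mathbb{C}^{k,D\dag}_{n}\cap\mathbb{C}^{k,\dag D}_{n}$ transparent. One caveat: what you prove is part (b) with $\widetilde{T}=\sum_{j=0}^{k-1}T^{j}PQ^{k-1-j}$, the matrix of Lemma \ref{lem2.3}; the exponent $Q^{k-j}$ printed in the statement of (b) is evidently a typo (that sum equals $\widetilde{T}Q$, and the resulting condition would be inconsistent with part (c) and with the block computation of $H^{k}H^{\dag}$), so your argument establishes the intended, correct version of the lemma.
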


By Theorem \ref{thm3.6} and the CEP-decomposition, we derive another canonical form of the CCE-inverse.

\begin{theorem}\label{thm4.2}
Let $H\in\mathbb{C}^{n\times n}_{k}$ be expressed as in $(\ref{eq2.5})$. Then
\begin{equation}\label{eq4.2}
H^{C,\textcircled{\dag}}=U \left[\begin{array}{cc}
T^{*}\Lambda & 0 \\
(I_{n-t}-Q^{\dag}Q)P^{*}\Lambda & 0 \\
\end{array}
\right] U^{*},
\end{equation}
\end{theorem}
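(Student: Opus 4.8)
The plan is to combine Theorem~\ref{thm3.6}, which gives $H^{C,\textcircled{\dag}}=H^{\dag}P_{H^{k}}$, with the explicit matrix forms already collected in the excerpt. First I would use Corollary~3.4 of \cite{w} together with the CEP-decomposition $(\ref{eq2.5})$ to write $P_{H^{k}}=HH^{\textcircled{\dag}}$, and then invoke Lemma~\ref{lem2.2}, i.e. $(\ref{eq2.6})$, to get
\[
P_{H^{k}}=HH^{\textcircled{\dag}}
=U\left[\begin{array}{cc} T & P \\ 0 & Q \end{array}\right]
\left[\begin{array}{cc} T^{-1} & 0 \\ 0 & 0 \end{array}\right]U^{*}
=U\left[\begin{array}{cc} I_{t} & 0 \\ 0 & 0 \end{array}\right]U^{*},
\]
where $t=r(H^{k})$. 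This is the key simplification: the projector onto $\mathcal{R}(H^{k})$ is diagonal in the CEP basis and simply annihilates the second block column.

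Next I would take the explicit expression $(\ref{eq2.8})$ for $H^{\dag}$ in the CEP basis and postmultiply by this block form of $P_{H^{k}}$. Since $P_{H^{k}}$ has the identity in the top-left block and zeros elsewhere, right multiplication keeps the first block column of $H^{\dag}$ intact and zeros out the second block column, yielding
\[
H^{C,\textcircled{\dag}}=H^{\dag}P_{H^{k}}
=U\left[\begin{array}{cc}
T^{*}\Lambda & 0 \\
(I_{n-t}-Q^{\dag}Q)P^{*}\Lambda & 0
\end{array}\right]U^{*},
\]
which is exactly $(\ref{eq4.2})$. The matrix $\Lambda=(TT^{*}+PP^{*}-PQ^{\dag}QP^{*})^{-1}$ is inherited verbatim from $(\ref{eq2.8})$, so no new computation with $\Lambda$ is needed.

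There is no serious obstacle here; the proof is essentially a two-line block-matrix multiplication once the right ingredients are lined up. The one point requiring a little care is making sure the block form of $P_{H^{k}}$ is justified — that $HH^{\textcircled{\dag}}=H^{k}(H^{k})^{\dag}=P_{H^{k}}$ and that it equals $U\,\mathrm{diag}(I_{t},0)\,U^{*}$ in the CEP basis — but this follows immediately from Corollary~3.4 of \cite{w} and Lemma~\ref{lem2.2}, both already available. An alternative, equally short route would be to start from the definition $H^{C,\textcircled{\dag}}=H^{\dag}H_{1}H^{\dag}$, write $H_{1}=U\,\mathrm{diag}(T,P;0,0)\,U^{*}$ from $(\ref{eq2.5})$ and $H^{\dag}$ from $(\ref{eq2.8})$, and multiply the three blocks directly; this also reproduces $(\ref{eq4.2})$ but involves a slightly longer intermediate product, so I would present the $H^{\dag}P_{H^{k}}$ version as the main argument.
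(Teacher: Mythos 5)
Your proposal is correct and follows essentially the same route as the paper: apply Theorem~\ref{thm3.6} to write $H^{C,\textcircled{\dag}}=H^{\dag}P_{H^{k}}$, note that $P_{H^{k}}=U\operatorname{diag}(I_{t},0)U^{*}$ in the CEP basis, and postmultiply the expression $(\ref{eq2.8})$ for $H^{\dag}$ by this projector. The only difference is that you spell out the justification of the block form of $P_{H^{k}}$ via $HH^{\textcircled{\dag}}$ and Lemma~\ref{lem2.2}, which the paper leaves implicit.
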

where $\Lambda=(TT^{*}+PP^{*}-PQ^{\dag}QP^{*})^{-1},\;\ t=r(H^{k}).$
\begin{proof}
By Theorem \ref{thm3.6} and $(\ref{eq2.8})$, we deduce that
\begin{eqnarray*}
H^{C,\textcircled{\dag}}&=&H^{\dag}P_{H^{k}}\\
&=&U \left[\begin{array}{cc}
T^{*}\Lambda & -T^{*}\Lambda PQ^{\dag} \\
(I_{n-t}-Q^{\dag}Q)P^{*}\Lambda & Q^{\dag}-(I_{n-t}-Q^{\dag}Q)P^{*}\Lambda PQ^{\dag} \\
\end{array}
\right] \left[\begin{array}{cc}
I_{t} & 0 \\
0 & 0 \\
\end{array}
\right] U^{*}\\
&=&U \left[\begin{array}{cc}
T^{*}\Lambda & 0 \\
(I_{n-t}-Q^{\dag}Q)P^{*}\Lambda & 0 \\
\end{array}
\right]U^{*},
\end{eqnarray*}
where $\Lambda=(TT^{*}+PP^{*}-PQ^{\dag}QP^{*})^{-1},\;\ t=r(H^{k})$.
\end{proof}

Now, some properties of $H^{C,\textcircled{\dag}}$ are deduced by Theorems \ref{thm4.2} in the following two theorems.

\begin{theorem}
Let $H\in\mathbb{C}^{n\times n}_{k}$ have the form of $(\ref{eq2.5})$. Then
\begin{itemize}
\item[(a)] $H^{C,\textcircled{\dag}}=H^{D}\Leftrightarrow P=0\Leftrightarrow H\in\mathbb{C}^{i-\mathrm{EP}}_{n}$ (or equivalently $H^{k}\in\mathbb{C}^{\mathrm{EP}}_{n}$);
\item[(b)] $H^{C,\textcircled{\dag}}=H^{\dag,D}\Leftrightarrow H\in\mathbb{C}^{i-\mathrm{EP}}_{n}$;
\item[(c)] $H^{C,\textcircled{\dag}}=H^{*}\Leftrightarrow H\in\mathbb{C}^{\mathrm{CM}}_{n}\cap\mathbb{C}^{\mathrm{PI}}_{n}$;
\item[(d)] $H^{C,\textcircled{\dag}}=H^{\textcircled{\dag}}\Leftrightarrow P=PQ^{\dag}Q\Leftrightarrow H\in\mathbb{C}^{k,\dag D}_{n}$(or equivalently $H^{k}H^{\dag D}=H^{\dag D}H^{k})$;
\item[(e)] $H^{C,\textcircled{\dag}}=H^{D,\dag}\Leftrightarrow H\in\mathbb{C}^{k,\textcircled{w}}_{n}\cap\mathbb{C}^{\textrm{k,\dag D}}_{n}$;
\item[(f)] $H^{C,\textcircled{\dag}}=H^{C,\dag}\Leftrightarrow P=PQ^{\dag}Q \Leftrightarrow H\in\mathbb{C}^{\textrm{k,\dag D}}_{n}$.
\end{itemize}
\end{theorem}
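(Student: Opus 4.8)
The plan is to prove all six equivalences by one uniform device: put $H$ in its CEP form $(\ref{eq2.5})$, write each side of the relevant equation as a $U(\cdot)U^{*}$ block matrix, compare block by block, and then translate the resulting identities among $T,P,Q$ into the claimed membership via Lemmas~\ref{lem2.3} and~\ref{lem2.5}. On the left, the workhorse is the canonical form $(\ref{eq4.2})$ of Theorem~\ref{thm4.2},
$$H^{C,\textcircled{\dag}}=U\left[\begin{smallmatrix}T^{*}\Lambda & 0\\ (I_{n-t}-Q^{\dag}Q)P^{*}\Lambda & 0\end{smallmatrix}\right]U^{*},\qquad \Lambda=\bigl(TT^{*}+P(I_{n-t}-Q^{\dag}Q)P^{*}\bigr)^{-1},$$
whose right block column is zero and whose inner matrix is positive definite ($TT^{*}\succ0$ plus the positive semidefinite term $P(I-Q^{\dag}Q)P^{*}$); hence $T$, $T^{-k}$, $\Lambda$, $T^{*}\Lambda$ are invertible, which I will use throughout to cancel left factors. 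For the right-hand sides I would take $H^{\textcircled{\dag}}=U\,\mathrm{diag}(T^{-1},0)\,U^{*}$ from $(\ref{eq2.6})$, $H^{D}$ from $(\ref{eq2.7})$, $H^{\dag}$ from $(\ref{eq2.8})$, the obvious $H^{*}=U\left[\begin{smallmatrix}T^{*}&0\\P^{*}&Q^{*}\end{smallmatrix}\right]U^{*}$, and for the three composite inverses I would expand $H^{\dag,D}=H^{\dag}(HH^{D})$, $H^{D,\dag}=H^{D}(HH^{\dag})$ and $H^{C,\dag}=H^{\dag}(HH^{D}H)H^{\dag}$ from $(\ref{eq2.8})$, $(\ref{eq2.9})$ and $(\ref{eq2.5})$; one gets, for example, $H^{\dag,D}=U\left[\begin{smallmatrix}T^{*}\Lambda & T^{*}\Lambda T^{-k}\widetilde{T}\\ (I-Q^{\dag}Q)P^{*}\Lambda & (I-Q^{\dag}Q)P^{*}\Lambda T^{-k}\widetilde{T}\end{smallmatrix}\right]U^{*}$ and $H^{D,\dag}=U\left[\begin{smallmatrix}T^{-1}& T^{-(k+1)}\widetilde{T}QQ^{\dag}\\0&0\end{smallmatrix}\right]U^{*}$, where $\widetilde{T}=\sum_{j=0}^{k-1}T^{j}PQ^{k-1-j}$ as in Lemma~\ref{lem2.3}.

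Each item is then a short comparison. For (a), the $(1,2)$ block of $H^{D}$ forces $T^{-(k+1)}\widetilde{T}=0$, hence $\widetilde{T}=0$, hence $P=0$ by Lemma~\ref{lem2.3}; conversely $P=0$ gives $\Lambda=(TT^{*})^{-1}$, whence $T^{*}\Lambda=T^{-1}$ and $(I-Q^{\dag}Q)P^{*}\Lambda=0$, i.e.\ $H^{C,\textcircled{\dag}}=H^{D}$; and $P=0\Leftrightarrow H\in\mathbb{C}^{i-\mathrm{EP}}_{n}$ by Lemma~\ref{lem2.5}(a) (with $H^{k}\in\mathbb{C}^{\mathrm{EP}}_{n}$ being the very definition of $i$-EP). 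For (b) I would use $H^{C,\textcircled{\dag}}=H^{\dag,D}P_{H^{k}}$ (Theorem~\ref{thm3.6}), so $X=H^{\dag,D}$ reduces to the vanishing of the right block column of $H^{\dag,D}$, again $T^{*}\Lambda T^{-k}\widetilde{T}=0$, again $\widetilde{T}=0$, again $P=0$, i.e.\ $H\in\mathbb{C}^{i-\mathrm{EP}}_{n}$. For (c), the $(2,2)$ block forces $Q^{*}=0$, so $H_{2}=0$ and $\mathrm{Ind}(H)\le1$; then the $(1,1)$ block forces $T^{*}\Lambda=T^{*}$, i.e.\ $\Lambda=I_{t}$, i.e.\ $TT^{*}+PP^{*}=I_{t}$; and $Q=0$ together with $TT^{*}+PP^{*}=I_{t}$ is exactly $HH^{*}H=H$ (using that $[T\ P]$ has full row rank $t$ to cancel it), i.e.\ $H\in\mathbb{C}^{\mathrm{CM}}_{n}\cap\mathbb{C}^{\mathrm{PI}}_{n}$. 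For (d), comparing with $H^{\textcircled{\dag}}$ forces $T^{*}\Lambda=T^{-1}$, equivalently $P(I-Q^{\dag}Q)P^{*}=0$, equivalently (since $I-Q^{\dag}Q$ is an orthogonal projector) $P(I-Q^{\dag}Q)=0$, i.e.\ $P=PQ^{\dag}Q$, and the $(2,1)$ block then vanishes automatically; by Lemma~\ref{lem2.5}(d) this is $H\in\mathbb{C}^{k,\dag D}_{n}$. For (e), $X=H^{D,\dag}$ forces both $P=PQ^{\dag}Q$ (from the first-column blocks, exactly as in (d)) and $\widetilde{T}QQ^{\dag}=0$ (from the $(1,2)$ block). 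For (f) I would again invoke Theorem~\ref{thm3.6} ($H^{C,\textcircled{\dag}}=H^{C,\dag}P_{H^{k}}$), so $X=H^{C,\dag}$ reduces to the vanishing of the right block column of $H^{C,\dag}$ in its CEP-block form; after using $Q(I-Q^{\dag}Q)=0$ those entries become left-multiples of $\widetilde{T}QQ^{\dag}$, and Lemma~\ref{lem2.5} reads off the corresponding matrix class.

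The only genuinely non-formal steps are the two arithmetic facts about $\widetilde{T}$ needed for (e) and (f), and this is where I expect the main obstacle to sit. First, $\widetilde{T}QQ^{\dag}=0\Leftrightarrow\widetilde{T}Q=0$: ``$\Leftarrow$'' is trivial, and ``$\Rightarrow$'' follows from $\widetilde{T}Q=(\widetilde{T}QQ^{\dag})Q$ (as $QQ^{\dag}Q=Q$). Second, $\widetilde{T}Q=0\Leftrightarrow PQ=0$, hence $\Leftrightarrow H\in\mathbb{C}^{k,\textcircled{w}}_{n}$ by Lemma~\ref{lem2.5}(e): ``$\Leftarrow$'' is clear since $PQ=0$ annihilates every summand of $\widetilde{T}Q=\sum_{j=0}^{k-1}T^{j}PQ^{k-j}$; for ``$\Rightarrow$'' I would multiply $\widetilde{T}Q=0$ by $T^{-(k-1)}$ and reindex to $\sum_{m=1}^{k-1}T^{-(m-1)}PQ^{m}=0$ (the $m=k$ term dropping because $Q^{k}=0$), then post-multiply successively by $Q^{k-2},Q^{k-3},\dots,Q^{0}$, each time using $Q^{k}=0$ and the previously derived relations to isolate $PQ^{k-1}=0$, then $PQ^{k-2}=0$, \dots, down to $PQ=0$ --- a finite descending induction on the exponent. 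Everything else reduces to multiplying out block matrices together with the standard identities $QQ^{\dag}Q=Q$, $Q(I-Q^{\dag}Q)=0$, and the Hermitian idempotence of $QQ^{\dag}$, $Q^{\dag}Q$, $I-Q^{\dag}Q$.
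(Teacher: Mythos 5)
Your proposal follows essentially the same route as the paper's proof: the canonical form $(\ref{eq4.2})$ for $H^{C,\textcircled{\dag}}$, the CEP block forms $(\ref{eq2.6})$--$(\ref{eq2.9})$ for the other inverses, blockwise comparison, and Lemmas \ref{lem2.3} and \ref{lem2.5} to translate the resulting conditions on $T,P,Q$; your block expressions for $H^{\dag,D}$, $H^{D,\dag}$ and $H^{C,\dag}$ coincide with $(\ref{eq4.4})$, the display in the paper's proof of (e), and $(\ref{eq4.5})$, and using Theorem \ref{thm3.6} in (b) and (f) is only a mild streamlining of the same computation. Two of your additions actually tighten the paper's argument: the justification $T^{*}\Lambda=T^{-1}\Leftrightarrow P(I_{n-t}-Q^{\dag}Q)P^{*}=0\Leftrightarrow P=PQ^{\dag}Q$, and the descending-induction proof that $\widetilde{T}QQ^{\dag}=0\Leftrightarrow\widetilde{T}Q=0\Leftrightarrow PQ=0$, both of which the paper asserts without proof. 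Parts (a)--(e) are in order.

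The one point you must address is (f). Your computation (vanishing of the right block column of $H^{C,\dag}$ in $(\ref{eq4.5})$, since $T^{*}\Lambda T^{-k}$ is invertible) gives the condition $\widetilde{T}QQ^{\dag}=0$, hence $PQ=0$, i.e.\ $H\in\mathbb{C}^{k,\textcircled{w}}_{n}$ by Lemma \ref{lem2.5}(e); this is exactly what the paper's own proof of (f) derives, but it is not the condition printed in the statement of (f), which asserts $P=PQ^{\dag}Q$, i.e.\ $H\in\mathbb{C}^{k,\dag D}_{n}$. The two conditions are genuinely different: for instance, with $Q$ the $3\times3$ nilpotent Jordan block and $P=(0,1,0)$ one has $P=PQ^{\dag}Q$ but $PQ\neq0$. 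So the printed item (f) is a misstatement, and your closing phrase that Lemma \ref{lem2.5} ``reads off the corresponding matrix class'' silently lands on $\mathbb{C}^{k,\textcircled{w}}_{n}$ rather than $\mathbb{C}^{k,\dag D}_{n}$; as written, your argument proves the corrected version of (f) but not the statement as literally given, and you should state the discrepancy explicitly rather than gloss over it.
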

\begin{proof}
(a). By $(\ref{eq2.7})$ and $(\ref{eq4.2})$, we get that
\begin{eqnarray*}
H^{C,\textcircled{\dag}}=H^{D}&\Leftrightarrow& U \left[\begin{array}{cc}
T^{*}\Lambda & 0 \\
(I_{n-t}-Q^{\dag}Q)P^{*}\Lambda & 0 \\
\end{array}
\right]U^{*}=U \left[\begin{array}{cc}
T^{-1} & (T^{k+1})^{-1}\widetilde{T} \\
0 & 0 \\
\end{array}
\right]U^{*}\\
&\Leftrightarrow& T^{*}\Lambda=T^{-1},\ \ \ \widetilde{T}=0, \ \ \ P=PQ^{\dag}Q\\
&\Leftrightarrow& P=0.
\end{eqnarray*}

(b). Thanks to $(\ref{eq2.7})$ and  $(\ref{eq2.8})$, the canonical form of $H^{\dag D}$ is given by
\begin{equation} \label{eq4.4}
H^{\dag D}=H^{\dag}HH^{D}=U \left[\begin{array}{cc}
T^{*}\Lambda & T^{*}\Lambda T^{-k}\widetilde{T} \\
(I_{n-t}-Q^{\dag}Q)P^{*}\Lambda & (I_{n-t}-Q^{\dag}Q)P^{*}\Lambda T^{-k}\widetilde{T} \\
\end{array}
\right]U^{*}.
\end{equation}
Then, $H^{\dag D}=H^{C,\textcircled{\dag}}\Leftrightarrow \widetilde{T}=0$$\Leftrightarrow P=0$.

(c). It is easy to check that
\begin{eqnarray*}
H^{C,\textcircled{\dag}}=H^{*}&\Leftrightarrow& U \left[\begin{array}{cc}
T^{*}\Lambda & 0 \\
(I_{n-t}-Q^{\dag}Q)P^{*}\Lambda & 0 \\
\end{array}
\right]U^{*}=U \left[\begin{array}{cc}
T^{*} & 0 \\
P^{*} & Q^{*} \\
\end{array}
\right]U^{*}\\
&\Leftrightarrow& Q^{*}=0,\;\;\ T^{*}\Lambda=T^{*}\;\;\ {\rm and} \;\;\ (I_{n-t}-Q^{\dag}Q)P^{*}=P^{*} \\
&\Leftrightarrow& Q=0\;\;\;\ {\rm and}\;\;\;\ TT^{*}+PP^{*}=I_{t}\\
&\Leftrightarrow& H=U\left[\begin{array}{cc}
T & P \\
0 & 0 \\
\end{array}
\right]U^{*}\;\;\;\ {\rm and}\;\;\;\ HH^{*}H=H \\
&\Leftrightarrow& H\in\mathbb{C}^{\mathrm{CM}}_{n}\cap \mathbb{C}^{\mathrm{PI}}_{n}.
\end{eqnarray*}

(d). By $(\ref{eq2.6})$, $(\ref{eq4.2})$ and Lemma \ref{lem2.5}, a straightforward computation shows that
\begin{eqnarray*}
H^{C,\textcircled{\dag}}=H^{\textcircled{\dag}}&\Leftrightarrow& T^{*}\Lambda=T^{-1}\;\;\;\ {\rm and} \;\;\;\ (I_{n-t}-Q^{\dag}Q)P^{*}\Lambda=0 \\
&\Leftrightarrow& P=PQ^{\dag}Q\\
&\Leftrightarrow& H\in\mathbb{C}^{k,\dag D}_{n}.
\end{eqnarray*}

(e). It follows from $(\ref{eq2.7})$ and $(\ref{eq2.9})$ that
\begin{eqnarray*}
H^{D,\dag}&=&H^{D}HH^{\dag}\\
&=&U \left[\begin{array}{cc}
T^{-1} & (T^{k+1})^{-1}\widetilde{T} \\
0 & 0 \\
\end{array}
\right]\left[\begin{array}{cc}
I & 0 \\
0 & QQ^{\dag} \\
\end{array}
\right]U^{*}\\
&=&U \left[\begin{array}{cc}
T^{-1} & (T^{k+1})^{-1}\widetilde{T}QQ^{\dag} \\
0 & 0 \\
\end{array}
\right]U^{*}.
\end{eqnarray*}
According to $(\ref{eq4.2})$, we deduce that
\begin{eqnarray*}
H^{C,\textcircled{\dag}}=H^{D,\dag} &\Leftrightarrow& T^{-1}=T^{*}\Lambda,\;\;\ (I_{n-t}-Q^{\dag}Q)P^{*}\Lambda=0 \;\;\ and \;\;\ (T^{k+1})^{-1}\widetilde{T}QQ^{\dag}=0 \\
&\Leftrightarrow& P=PQ^{\dag}Q \;\;\ and \;\;\ \widetilde{T}QQ^{\dag}=0\\
&\Leftrightarrow& P=PQ^{\dag}Q \;\;\ and \;\;\ \widetilde{T}Q=0\\
&\Leftrightarrow& P=PQ^{\dag}Q \;\;\ and \;\;\ PQ=0\\
&\Leftrightarrow& H\in\mathbb{C}^{k,\textcircled{w}}_{n}\cap\mathbb{C}^{k,\dag D}_{n}(by\;\;\ Lemma \;\;\ \ref{lem2.5}).
\end{eqnarray*}

(f). By $(\ref{eq2.7})$, $(\ref{eq2.8})$ and  $(\ref{eq2.9})$, we obtain that
\begin{equation} \label{eq4.5}
H^{C,\dag}=U \left[\begin{array}{cc}
T^{*}\Lambda & T^{*}\Lambda T^{-k}\widetilde{T}QQ^{\dag} \\
(I_{n-t}-Q^{\dag}Q)P^{*}\Lambda & (I_{n-t}-Q^{\dag}Q)P^{*}\Lambda T^{-k}\widetilde{T}QQ^{\dag}\\
\end{array}
\right]U^{*}.
\end{equation}
Therefore, $H^{C,\textcircled{\dag}}=H^{C,\dag}\Leftrightarrow \widetilde{T}QQ^{\dag}=0\Leftrightarrow \widetilde{T}Q=0\Leftrightarrow PQ=0\Leftrightarrow H\in\mathbb{C}^{k,\textcircled{w}}_{n}$ (or equivalently $H^{k}H^{\textcircled{w}}=H^{\textcircled{w}}H^{k}$).
\end{proof}

\begin{theorem}
Let $H\in\mathbb{C}^{n\times n}_{k}$ have the form of $(\ref{eq2.5})$. Then the following assertions are equivalent:
\begin{itemize}
\item[(a)] $H^{C,\textcircled{\dag}}\in\mathbb{C}^{\mathrm{EP}}_{n}$;
\item[(b)] $P=PQ^{\dag}Q$;
\item[(c)] $H^{k}H^{\dag,D}=H^{\dag,D}H^{k}$;
\item[(d)] $H^{C,\textcircled{\dag}}H^{k+1}=H^{k}$;
\item[(e)] $H^{\dag}H^{k+1}=H^{k}$;
\item[(f)] $H^{C,\textcircled{\dag}}H^{k}=H^{\textcircled{w}}H^{k}$;
\item[(g)] $H^{C,\dag}H^{k}=H^{\textcircled{w}}H^{k}$;
\item[(h)] $HH^{D}=H^{\dag,D}H$;
\item[(i)] $HH^{D}=H^{C,\dag}H$.
\end{itemize}
\end{theorem}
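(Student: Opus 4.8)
The plan is to use $(\ref{eq2.5})$ together with the canonical forms already available and to show that each of (a)--(i) is equivalent to the single pivotal condition (b), namely $P=PQ^{\dag}Q$. I will use freely the chain of reformulations of (b): $P=PQ^{\dag}Q\Leftrightarrow(I_{n-t}-Q^{\dag}Q)P^{*}=0\Leftrightarrow P(I_{n-t}-Q^{\dag}Q)P^{*}=0$ (the last step from $X^{*}X=0\Leftrightarrow X=0$ with $X=(I_{n-t}-Q^{\dag}Q)P^{*}$) $\Leftrightarrow\Lambda=(TT^{*})^{-1}$ (since $\Lambda^{-1}=TT^{*}+P(I_{n-t}-Q^{\dag}Q)P^{*}$). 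With this, (b)$\Leftrightarrow$(c) is immediate: (c) is by definition the assertion $H\in\mathbb{C}^{k,\dag D}_{n}$, and Lemma \ref{lem2.5}(d) says $H\in\mathbb{C}^{k,\dag D}_{n}\Leftrightarrow P=PQ^{\dag}Q$.

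Next I would clear the three ``soft'' pairs that need no block computation. By Theorem \ref{thm3.6}, $H^{C,\textcircled{\dag}}=H^{\dag}P_{H^{k}}$ and $H^{C,\dag}P_{H^{k}}=H^{C,\textcircled{\dag}}$; since $P_{H^{k}}H^{k}=H^{k}$ this gives $H^{C,\textcircled{\dag}}H^{k+1}=H^{\dag}P_{H^{k}}H^{k+1}=H^{\dag}H^{k+1}$ and $H^{C,\dag}H^{k}=H^{C,\dag}P_{H^{k}}H^{k}=H^{C,\textcircled{\dag}}H^{k}$, whence (d)$\Leftrightarrow$(e) and (f)$\Leftrightarrow$(g). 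For (h)$\Leftrightarrow$(i), note that $H^{\dag,D}H=H^{\dag}HH^{D}H$ and, by the CMP-inverse identity $H^{C,\dag}H=H^{\dag}\widetilde{H}_{1}$ with $\widetilde{H}_{1}=HH^{D}H$, also $H^{C,\dag}H=H^{\dag}HH^{D}H$; so (h) and (i) are literally the same equation.

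It then remains to prove that (a), (e), (f), (h) are each equivalent to (b), and this is where the real work lies. For (e), (f), (h) I would substitute the block forms $(\ref{eq2.8})$, $(\ref{eq4.2})$, $(\ref{eq4.4})$, $(\ref{eq4.5})$, the elementary identity $H^{k}=U\bigl[\begin{smallmatrix}T^{k}&\widetilde{T}\\0&0\end{smallmatrix}\bigr]U^{*}$ (valid since $Q^{k}=0$), and the canonical form $H^{\textcircled{w}}=U\bigl[\begin{smallmatrix}T^{-1}&T^{-2}P\\0&0\end{smallmatrix}\bigr]U^{*}$ (obtained from $H^{\textcircled{w}}=(H^{\textcircled{\dag}})^{2}H$ and $(\ref{eq2.6})$), and then compare the two sides blockwise. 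In each of these three cases the $(1,1)$-entry of the equation, after cancelling an invertible power of $T$, collapses to $T^{*}\Lambda T=I_{t}$, which forces $\Lambda=(TT^{*})^{-1}$ and hence (b); conversely, once (b) holds the off-diagonal blocks and the $\widetilde{T}$-blocks match automatically, using $(I_{n-t}-Q^{\dag}Q)P^{*}=0$ together with identities such as $P+T^{-k}\widetilde{T}Q=T^{1-k}\widetilde{T}$ (which follows from $T\widetilde{T}=T^{k}P+\widetilde{T}Q$) and $QQ^{\dag}Q=Q$. For (a) the shape is different: $H^{C,\textcircled{\dag}}=UMU^{*}$ with $M$ the matrix in $(\ref{eq4.2})$, whose $(1,1)$-block $T^{*}\Lambda$ is invertible; a matrix of the form $\bigl[\begin{smallmatrix}A&0\\C&0\end{smallmatrix}\bigr]$ with $A$ invertible is EP iff $C=0$ (compare $\mathcal{N}(M)$ with $\mathcal{N}(M^{*})=\mathcal{R}(M)^{\perp}$), and here $C=0$ is exactly $(I_{n-t}-Q^{\dag}Q)P^{*}=0$, i.e. (b). (Equivalently, by Corollary \ref{cor3.7} being EP means $\mathcal{R}(H^{\dag}H^{k})=\mathcal{R}(H^{k})$.)

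I expect the main obstacle to be bookkeeping rather than conceptual: one must expand $H^{\dag}H^{k+1}$, $H^{C,\textcircled{\dag}}H^{k}$, $H^{\textcircled{w}}H^{k}$ and $H^{\dag,D}H$ in $2\times2$ block form with care, keep track of the simplifications coming from $Q^{k}=0$ and $QQ^{\dag}Q=Q$, and — crucially in the ``only if'' direction — verify that no constraint stronger than $P=PQ^{\dag}Q$ is ever forced, while in the ``if'' direction check that all the auxiliary blocks involving $\widetilde{T}$ or $I_{n-t}-Q^{\dag}Q$ genuinely cancel. A minor preliminary is deriving the block form of $H^{\textcircled{w}}$.
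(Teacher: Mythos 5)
Your proposal is correct and follows essentially the same route as the paper: every condition is reduced to $P=PQ^{\dag}Q$ via the CEP block forms $(\ref{eq2.8})$, $(\ref{eq4.2})$, $(\ref{eq4.4})$, $(\ref{eq4.5})$ and the canonical form of $H^{\textcircled{w}}$, with the same soft identifications (d)$\Leftrightarrow$(e) and (h)$\Leftrightarrow$(i), and your claimed block collapses (e.g.\ $T^{*}\Lambda T=I_{t}$ forcing $\Lambda=(TT^{*})^{-1}$, and $T\widetilde{T}=T^{k}P+\widetilde{T}Q$ in the converse direction) all check out. The only harmless deviations are your shortcut (f)$\Leftrightarrow$(g) via $H^{C,\dag}P_{H^{k}}=H^{C,\textcircled{\dag}}$ from Theorem \ref{thm3.6}, where the paper instead repeats a block computation with $(\ref{eq4.5})$, and your use of Lemma \ref{lem2.5}(d) for (b)$\Leftrightarrow$(c), which is the same content the paper cites from the literature.
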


\begin{proof}
``$(a)\Leftrightarrow(b)$''. By Theorem \ref{thm4.2}, we have
$$
H^{C,\textcircled{\dag}}=U \left[\begin{array}{cc}
T^{*}\Lambda & 0 \\
(I_{n-t}-Q^{\dag}Q)P^{*}\Lambda & 0 \\
\end{array}
\right] U^{*},
$$
where $\Lambda=[TT^{*}+P(I-Q^{\dag}Q)P^{*}]^{-1}.$
Therefore,
$$
H^{C,\textcircled{\dag}}\in\mathbb{C}^{\mathrm{EP}}_{n}\Leftrightarrow\mathcal{R}(H^{C,\textcircled{\dag}})
=\mathcal{R}(H^{(C,\textcircled{\dag})^{*}})\Leftrightarrow(I-Q^{\dag}Q)P^{*}\Lambda=0\Leftrightarrow P=PQ^{\dag}Q.
$$

``$(b)\Leftrightarrow(c)$''. The result can be proved by Theorem 3.16 of \cite{flt3}.

``$(b)\Leftrightarrow(d)$''. Due to $H=U \left[\begin{array}{cc}
T & P \\
0 & Q \\
\end{array}
\right] U^{*}$, we get that
$$
H^{k}=U \left[\begin{array}{cc}
T^{k} & \widetilde{T} \\
0 & 0 \\
\end{array}
\right] U^{*},
H^{k+1}=U \left[\begin{array}{cc}
T^{k+1} & T\widetilde{T} \\
0 & 0 \\
\end{array}
\right] U^{*},
$$
where $\widetilde{T}=T^{k-1}P+T^{k-1}PQ+\cdots+TPQ^{k-2}+PQ^{k-1}$. Thus, we have
$$H^{C,\textcircled{\dag}}H^{k+1}=H^{k}\Leftrightarrow \left[\begin{array}{cc}
T^{*}\Lambda & 0 \\
(I_{n-t}-Q^{\dag}Q)P^{*}\Lambda & 0 \\
\end{array}
\right]\left[\begin{array}{cc}
T^{k+1} & T\widetilde{T} \\
0 & 0 \\
\end{array}
\right]=\left[\begin{array}{cc}
T^{k} & \widetilde{T} \\
0 & 0 \\
\end{array}
\right]\Leftrightarrow P=PQ^{\dag}Q.$$

``$(e)\Leftrightarrow(d)$''. One can easily verify that $H^{C,\textcircled{\dag}}H^{k+1}=H^{k}\Leftrightarrow H^{\dag}P_{H^{k}}H^{k+1}=H^{k}\Leftrightarrow H^{\dag}H^{k+1}=H^{k}$.

``$(b)\Leftrightarrow(f)$''. Thanks to $H=U\left[\begin{array}{cc}
T & P\\
0 & Q \\
\end{array}
\right]U^{*},$ we obtain that $$H^{C,\textcircled{\dag}}=U\left[\begin{array}{cc}
T^{*}\Lambda & 0 \\
(I_{n-t}-Q^{\dag}Q)P^{*}\Lambda & 0 \\
\end{array}
\right]U^{*}, \ \ \
H^{\textcircled{w}}=U\left[\begin{array}{cc}
T^{-1} & T^{-2}P \\
0 & 0 \\
\end{array}
\right]U^*({\rm see Theorem 3.1 of cite{wc}}),
$$
$$
H^{k}=U\left[\begin{array}{cc}
T^{k} & \widetilde{T}\\
0 & 0 \\
\end{array}
\right]U^{*},
$$
where $\Lambda=[TT^{*}+P(I-Q^{\dag}Q)P^{*}]^{-1}$, $\widetilde{T}=T^{k-1}P+T^{k-2}PQ+\cdots+TPQ^{k-2}+PQ^{k-1}.$
Therefore, $H^{C,\textcircled{\dag}}H^{k}=H^{\textcircled{w}}H^{k}\Leftrightarrow(I-Q^{\dag}Q)P^{*}\Lambda T^{k}\Leftrightarrow P=PQ^{\dag}Q.$

``$(b)\Leftrightarrow(g)$''. By $(\ref{eq4.5})$, the proof is analogous to that of $(b)\Leftrightarrow(f)$.

``$(b)\Leftrightarrow(h)$''. It follows from $(\ref{eq2.7})$ and $(\ref{eq4.4})$ that
\begin{eqnarray*}
&&HH^{D}=H^{\dag,D}H\\
&\Leftrightarrow& \left[\begin{array}{cc}
T & P\\
0 & Q \\
\end{array}
\right]
\left[\begin{array}{cc}
T^{-1} & (T^{k+1})^{-1}\widetilde{T}\\
0 & 0 \\
\end{array}
\right]=\left[\begin{array}{cc}
T^{*}\Lambda & T^{*}\Lambda T^{-k}\widetilde{T}\\
(I_{n-t}-Q^{\dag}Q)P^{*}\Lambda & (I_{n-t}-Q^{\dag}Q)P^{*}\Lambda T^{-k}\widetilde{T} \\
\end{array}
\right]\left[\begin{array}{cc}
T & P\\
0 & Q \\
\end{array}
\right]\\
&\Leftrightarrow&\left[\begin{array}{cc}
I & T^{-k}\widetilde{T}\\
0 & 0 \\
\end{array}
\right]=\left[\begin{array}{cc}
T^{*}\Lambda T & T^{*}\Lambda P+T^{*}\Lambda T^{-k}\widetilde{T}Q\\
(I_{n-t}-Q^{\dag}Q)P^{*}\Lambda T & (I_{n-t}-Q^{\dag}Q)P^{*}\Lambda P + (I_{n-t}-Q^{\dag}Q)P^{*}\Lambda T^{-k}\widetilde{T}Q \\
\end{array}
\right]\\
&\Leftrightarrow&
\left\{\begin{array}{l}
I=T^{*}\Lambda T\\
T^{-k}\widetilde{T}=T^{*}\Lambda P+T^{*}\Lambda T^{-k}\widetilde{T}Q\\
0=(I_{n-t}-Q^{\dag}Q)P^{*}\Lambda T\\
0=(I_{n-t}-Q^{\dag}Q)P^{*}\Lambda P+(I_{n-t}-Q^{\dag}Q)P^{*}\Lambda T^{-k}\widetilde{T}Q.
\end{array}
\right.\\
&\Leftrightarrow& P=PQ^{\dag}Q.
\end{eqnarray*}

``$(h)\Leftrightarrow(i)$''. As $H^{C,\dag}H=H^{\dag,D}H$, then $HH^{D}=H^{C,\dag}H\Leftrightarrow HH^{D}=H^{\dag,D}H$.
\end{proof}

\section{CCE-inverse matrix and k-CCE matrix}

In this section, we generalize the definitions of the EP-matrix and $p$-EP matrix \cite{Mr} to the CCE-inverse matrix and k-CCE matrix, respectively. Furthermore, we investigate their characterizations by applying the CEP-decomposition. We start with the concept of CCE-inverse matrix.

\begin{definition}
Let $H\in\mathbb{C}^{n\times n}_{k}$. Then $H$ is called a CCE-inverse matrix if $HH^{C,\textcircled{\dag}}=H^{C,\textcircled{\dag}}H$.
\end{definition}
\begin{theorem}\label{thm5.2}
Let $H\in\mathbb{C}^{n\times n}$ have the form of $(\ref{eq2.5})$. Then the following assertions are equivalent:
\begin{itemize}
\item[(a)] $HH^{C,\textcircled{\dag}}=H^{C,\textcircled{\dag}}H$;
\item[(b)] $H_{1}H^{C,\textcircled{\dag}}=H^{C,\textcircled{\dag}}H_{1}$;
\item[(c)] $H_{1}H^{\dag}=H^{\dag}H_{1}$;
\item[(d)] $P=0$ (or equivalently $H^{k}\in\mathbb{C}^{\mathrm{EP}}_{n}$).
\end{itemize}
\end{theorem}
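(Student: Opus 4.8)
The plan is to establish the equivalences by working with the core-EP decomposition $(\ref{eq2.5})$ together with the canonical form of $H^{C,\textcircled{\dag}}$ from Theorem \ref{thm4.2}, and to route all implications through the simple matrix condition $P=0$. First I would prove $(c)\Leftrightarrow(d)$, which is the computational heart of the argument: using $(\ref{eq2.8})$ and the block form $H_{1}=U\left[\begin{smallmatrix}T & P\\0 & 0\end{smallmatrix}\right]U^{*}$, one computes $H_{1}H^{\dag}$ and $H^{\dag}H_{1}$ blockwise and compares. Since $H_{1}H^{\dag}=HH^{\textcircled{\dag}}HH^{\dag}=P_{H^{k}}P_{H}=P_{H^{k}}=U\left[\begin{smallmatrix}I_{t} & 0\\0 & 0\end{smallmatrix}\right]U^{*}$ by Theorem \ref{thm3.6} and Corollary 3.4 of \cite{w}, the condition $H_{1}H^{\dag}=H^{\dag}H_{1}$ forces $H^{\dag}H_{1}$ to be this orthogonal projector; reading off $(\ref{eq2.8})$ one gets $T^{*}\Lambda T = I_{t}$, $T^{*}\Lambda P = 0$, and $(I_{n-t}-Q^{\dag}Q)P^{*}\Lambda T = 0$, and since $T$ and $\Lambda$ are nonsingular these collapse to $P=0$. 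The converse direction, when $P=0$, is an immediate substitution showing both products equal $U\left[\begin{smallmatrix}I_{t} & 0\\0 & 0\end{smallmatrix}\right]U^{*}$. The parenthetical ``$P=0\Leftrightarrow H^{k}\in\mathbb{C}^{\mathrm{EP}}_{n}$'' is already available from Lemma \ref{lem2.5}(a) (note $P=0$ is equivalent to $H\in\mathbb{C}^{i-\mathrm{EP}}_{n}$, i.e. $H^k$ is EP).

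Next I would handle $(a)\Leftrightarrow(d)$ and $(b)\Leftrightarrow(d)$ in parallel, again by direct block computation. From $(\ref{eq2.5})$, $(\ref{eq2.9})$ and Theorem \ref{thm4.2} one has $H^{C,\textcircled{\dag}}=U\left[\begin{smallmatrix}T^{*}\Lambda & 0\\(I_{n-t}-Q^{\dag}Q)P^{*}\Lambda & 0\end{smallmatrix}\right]U^{*}$; multiplying on the left and right by $H=U\left[\begin{smallmatrix}T & P\\0 & Q\end{smallmatrix}\right]U^{*}$ and by $H_{1}=U\left[\begin{smallmatrix}T & P\\0 & 0\end{smallmatrix}\right]U^{*}$ yields $HH^{C,\textcircled{\dag}}=P_{H^k}=U\left[\begin{smallmatrix}I_t & 0\\0 & 0\end{smallmatrix}\right]U^*$ (consistent with Theorem 3.8(a)) and $H^{C,\textcircled{\dag}}H=U\left[\begin{smallmatrix}T^{*}\Lambda T & T^{*}\Lambda P\\(I_{n-t}-Q^{\dag}Q)P^{*}\Lambda T & (I_{n-t}-Q^{\dag}Q)P^{*}\Lambda P\end{smallmatrix}\right]U^{*}$. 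Equating these two forces $T^{*}\Lambda T=I_t$ and the off-diagonal/lower blocks to vanish, which as before gives $P=0$; conversely $P=0$ makes $\Lambda=(TT^{*})^{-1}$, $H^{C,\textcircled{\dag}}=U\left[\begin{smallmatrix}T^{-1} & 0\\0 & 0\end{smallmatrix}\right]U^{*}$, and both $HH^{C,\textcircled{\dag}}$ and $H^{C,\textcircled{\dag}}H$ equal $U\left[\begin{smallmatrix}I_t & 0\\0 & 0\end{smallmatrix}\right]U^{*}$. The computation for $H_1$ in place of $H$ is identical in structure since $H$ and $H_1$ have the same top block $\left[\begin{smallmatrix}T & P\end{smallmatrix}\right]$ and $H^{C,\textcircled{\dag}}$ has a zero second block column, so the lower-right $Q$ block of $H$ never enters these products. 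Alternatively one can shortcut $(a)\Leftrightarrow(b)$ by observing $HH^{C,\textcircled{\dag}}=H_1H^{C,\textcircled{\dag}}$ (the $Q$ block is killed by the zero column of $H^{C,\textcircled{\dag}}$) and similarly $H^{C,\textcircled{\dag}}H=H^{C,\textcircled{\dag}}H_1$ would need $H^{C,\textcircled{\dag}}H_2=0$, which holds since $H^{C,\textcircled{\dag}}H_2=U\left[\begin{smallmatrix}T^{*}\Lambda & 0\\ *& 0\end{smallmatrix}\right]\left[\begin{smallmatrix}0 & 0\\0 & Q\end{smallmatrix}\right]U^*=0$.

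The main obstacle is purely bookkeeping: keeping the block arithmetic with $\Lambda=(TT^{*}+P(I_{n-t}-Q^{\dag}Q)P^{*})^{-1}$ organized, and justifying carefully that $T^{*}\Lambda T=I_t$ together with the vanishing of $(I_{n-t}-Q^{\dag}Q)P^{*}\Lambda T$ really does imply $P=0$ rather than merely $(I_{n-t}-Q^{\dag}Q)P^{*}=0$. The clean way to see this: from $T^{*}\Lambda T=I_t$ with $T$ invertible we get $\Lambda=(TT^{*})^{-1}$, hence $TT^{*}+P(I_{n-t}-Q^{\dag}Q)P^{*}=TT^{*}$, so $P(I_{n-t}-Q^{\dag}Q)P^{*}=0$; since $I_{n-t}-Q^{\dag}Q$ is an orthogonal projector, $P(I_{n-t}-Q^{\dag}Q)P^{*}=\big(P(I_{n-t}-Q^{\dag}Q)\big)\big(P(I_{n-t}-Q^{\dag}Q)\big)^{*}=0$ gives $P(I_{n-t}-Q^{\dag}Q)=0$, i.e. $P=PQ^{\dag}Q$. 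Combined with the remaining relation $T^{*}\Lambda P=0$, i.e. $T^{*}(TT^{*})^{-1}P=0$, left-multiplication by $T$ gives $P=0$. I would present this as a short lemma-style paragraph inside the proof of $(c)\Leftrightarrow(d)$ and then reuse it verbatim for the other two equivalences.
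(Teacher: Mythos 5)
Your proposal is correct and follows essentially the same route as the paper: the core-EP decomposition together with the block forms of $H^{\dag}$ and $H^{C,\textcircled{\dag}}$, reducing everything to a comparison with $P_{H^{k}}$ and the condition $P=0$. The paper is just slightly more economical—it first collapses (a), (b), (c) to the single identity $H_{1}H^{\dag}=H^{\dag}H_{1}$ using $HH^{\dag}H=H$ and $H^{\textcircled{\dag}}HH^{\textcircled{\dag}}=H^{\textcircled{\dag}}$, then runs one block computation and reads $P=0$ straight off the $(1,2)$ block $T^{*}\Lambda P=0$ (with $T^{*}\Lambda$ nonsingular), whereas you carry out three parallel block computations and reach $P=0$ via the longer detour through $\Lambda=(TT^{*})^{-1}$.
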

\begin{proof}
Since $HH^{\dag}H=H$ and $H^{\textcircled{\dag}}HH^{\textcircled{\dag}}=H^{\textcircled{\dag}}$, we get that $(a)\Leftrightarrow(b)\Leftrightarrow(c)$.

``$(c)\Rightarrow(d)$". Suppose that $H_{1}H^{\dag}=H^{\dag}H_{1}$. Then we have $HH^{\textcircled{\dag}}HH^{\dag}=H^{\dag}HH^{\textcircled{\dag}}H$.
Noting that $HH^{\textcircled{\dag}}HH^{\dag}=H^{\dag}HH^{\textcircled{\dag}}H$ if and only if $P_{H^{k}}=H^{\dag}P_{H^{k}}H$, we deduce that
$$ U\left[\begin{array}{cc}
I_{t} & 0 \\
0 & 0\\
\end{array}
\right]U^{*}=
U\left[\begin{array}{cc}
T^{*}\Lambda & -T^{*}\Lambda PQ^{\dag} \\
(I_{n-t}-Q^{\dag}Q)P^{*}\Lambda & Q^{\dag}-(I-Q^{\dag}Q)P^{*}\Lambda PQ^{\dag}\\
\end{array}
\right]\left[\begin{array}{cc}
I_{t} & 0 \\
0 & 0\\
\end{array}
\right]\left[\begin{array}{cc}
T & P \\
0 & Q\\
\end{array}
\right]
U^{*}.$$
Thus,
$$\left[\begin{array}{cc}
I_{t} & 0 \\
0 & 0\\
\end{array}
\right]=\left[\begin{array}{cc}
T^{*}\Lambda T & T^{*}\Lambda P \\
(I_{n-t}-Q^{\dag}Q)P^{*}\Lambda T & (I_{n-t}-Q^{\dag}Q)P^{*}\Lambda P\\
\end{array}
\right].$$
By the above equation, we have $P=0$ as $T^{*}\Lambda$ is nonsingular.

``$(d)\Rightarrow(c)$". Assume that $H=U\left[\begin{array}{cc}
T & 0 \\
0 & Q\\
\end{array}
\right]U^{*}$, which is from the CEP-decomposition $(\ref{eq2.5})$. Then we have
$$H_{1}=HH^{\textcircled{\dag}}H=U\left[\begin{array}{cc}
T & 0 \\
0 & 0\\
\end{array}
\right]U^{*}, \ \ \ H^{\dag}=U\left[\begin{array}{cc}
T^{-1} & 0 \\
0 & Q^{\dag}\\
\end{array}
\right]U^{*},$$
which implies that $H_{1}H^{\dag}=H^{\dag}H_{1}$.
\end{proof}

Define the matrix operation as $[E,F]=EF-FE$. Next, using the fact that $H$ is the CCE-inverse matrix if and only if $P=0$ (by Theorem \ref{thm5.2} (d)), some characterizations of the CCE-inverse matrix are derived as follows.
\begin{theorem}
Let $H\in\mathbb{C}^{n\times n}_{k}$ have the form of $(\ref{eq2.5})$. Then the following assertions are equivalent:
\begin{itemize}
\item[(a)] $H$ is a CCE-inverse matrix;
\item[(b)]  $H^{C,\textcircled{\dag}}=H^{D}$;
\item[(c)]  $H^{C,\textcircled{\dag}}=H^{\dag,D}$;
\item[(d)]  $H^{C,\textcircled{\dag}}=H^{\textcircled{w}}$;
\item[(e)]  $H^{k+1}H^{C,\textcircled{\dag}}=H^{k}$;
\item[(f)]  $H^{k+1}H^{\textcircled{\dag}}=H^{k}$;
\item[(g)]  $HH^{C,\textcircled{\dag}}=HH^{D}$;
\item[(h)]  $H^{k}H^{C,\textcircled{\dag}}=H^{k}H^{\textcircled{\dag}}$;
\item[(i)]  $[HH^{C,\textcircled{\dag}},H^{C,\textcircled{\dag}}H]=0$.
\end{itemize}
\end{theorem}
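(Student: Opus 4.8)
The plan is to exploit the characterization already in hand from Theorem~\ref{thm5.2}, namely that $H$ is a CCE-inverse matrix if and only if $P=0$ in the CEP-decomposition~$(\ref{eq2.5})$. So rather than chasing a cycle of nine implications directly, I would show that each of (b)--(i) is equivalent to the condition $P=0$, and then invoke Theorem~\ref{thm5.2}(d) to close the loop through (a). The workhorse will be the canonical forms collected in Section~3: formula~$(\ref{eq4.2})$ for $H^{C,\textcircled{\dag}}$, formula~$(\ref{eq2.7})$ for $H^{D}$, formula~$(\ref{eq4.4})$ for $H^{\dag,D}$, Theorem~3.1 of~\cite{wc} for $H^{\textcircled{w}}$, and the block expressions $H^{k}=U\bigl[\begin{smallmatrix}T^{k}&\widetilde T\\0&0\end{smallmatrix}\bigr]U^{*}$ and $H^{k+1}=U\bigl[\begin{smallmatrix}T^{k+1}&T\widetilde T\\0&0\end{smallmatrix}\bigr]U^{*}$, where $\widetilde T=\sum_{j=0}^{k-1}T^{j}PQ^{k-1-j}$ and $\widetilde T=0 \Leftrightarrow P=0$ by Lemma~\ref{lem2.3}.

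For the algebraic equalities (b), (c), (d), the argument is essentially a repeat of parts (a), (b) of the earlier theorem on coincidences of $H^{C,\textcircled{\dag}}$ with other inverses: writing both sides in the $U(\cdot)U^{*}$ form, equality of the $(1,1)$ blocks forces $T^{*}\Lambda=T^{-1}$ (equivalently $\Lambda=(TT^{*})^{-1}$, which by the formula $\Lambda=(TT^{*}+P(I-Q^{\dag}Q)P^{*})^{-1}$ is itself equivalent to $P(I-Q^{\dag}Q)P^{*}=0$, hence to $P=PQ^{\dag}Q$), while the vanishing of the $(2,1)$ block or of the right-hand column's $\widetilde T$-terms upgrades $P=PQ^{\dag}Q$ to $P=0$. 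For (e), (f), (g), (h) I would multiply out the relevant products in block form: e.g. $H^{k+1}H^{C,\textcircled{\dag}}=U\bigl[\begin{smallmatrix}T^{k+1}T^{*}\Lambda+T\widetilde T(I-Q^{\dag}Q)P^{*}\Lambda&0\\0&0\end{smallmatrix}\bigr]U^{*}$ and compare with $H^{k}$; since $T$ is nonsingular, the $(1,1)$-block equation collapses to a relation that holds precisely when $P=0$. The identities (f) $H^{k+1}H^{\textcircled{\dag}}=H^{k}$ and (h) involving $H^{\textcircled{\dag}}=U\bigl[\begin{smallmatrix}T^{-1}&0\\0&0\end{smallmatrix}\bigr]U^{*}$ are the cleanest: $H^{k+1}H^{\textcircled{\dag}}=U\bigl[\begin{smallmatrix}T^{k}&0\\0&0\end{smallmatrix}\bigr]U^{*}$, which equals $H^{k}=U\bigl[\begin{smallmatrix}T^{k}&\widetilde T\\0&0\end{smallmatrix}\bigr]U^{*}$ iff $\widetilde T=0$ iff $P=0$.

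Condition (i) is the one I expect to require the most care, since it is a commutator identity rather than a plain equality. Here $HH^{C,\textcircled{\dag}}=P_{H^{k}}=U\bigl[\begin{smallmatrix}I_{t}&0\\0&0\end{smallmatrix}\bigr]U^{*}$ by part~(a) of the projection theorem, and $H^{C,\textcircled{\dag}}H$ is the oblique projector $P_{\mathcal R(H^{\dag}H^{k}),\mathcal N((H^{k})^{\dag}H)}$; writing $H^{C,\textcircled{\dag}}H$ in block form from~$(\ref{eq4.2})$ and $(\ref{eq2.5})$ gives $U\bigl[\begin{smallmatrix}T^{*}\Lambda T&T^{*}\Lambda P\\(I-Q^{\dag}Q)P^{*}\Lambda T&(I-Q^{\dag}Q)P^{*}\Lambda P\end{smallmatrix}\bigr]U^{*}$, and since $P_{H^{k}}$ kills the bottom block-row, the commutator $[HH^{C,\textcircled{\dag}},H^{C,\textcircled{\dag}}H]$ vanishes iff the $(2,1)$ and $(1,2)$ blocks of $H^{C,\textcircled{\dag}}H$ vanish, i.e.\ iff $(I-Q^{\dag}Q)P^{*}\Lambda T=0$ and $T^{*}\Lambda P=0$; as $T$ and $\Lambda$ are invertible this forces $P=0$. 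The main obstacle, then, is not any single implication but organizing the bookkeeping: I would state up front ``by Theorem~\ref{thm5.2} it suffices to prove (b)$\Leftrightarrow P{=}0$, \dots, (i)$\Leftrightarrow P{=}0$'', then dispatch each in two or three lines of block arithmetic, being careful that in several cases the first equality extracted is only $P=PQ^{\dag}Q$ and a second block comparison is needed to strengthen it to $P=0$.
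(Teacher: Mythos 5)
Your strategy---reduce every item to the condition $P=0$ via Theorem \ref{thm5.2} and the canonical forms $(\ref{eq4.2})$, $(\ref{eq2.7})$, $(\ref{eq4.4})$ and the block form of $H^{\textcircled{w}}$---is exactly the paper's, and your block arguments for (b), (c), (d), (f), (g) and (i) are correct and essentially identical to the ones given there. However, there is a genuine failure point at (h), and a smaller misattribution at (e). Because $Q^{m}(I-Q^{\dag}Q)=0$ for every $m\geq 1$, one has $\widetilde{T}(I-Q^{\dag}Q)=T^{k-1}P(I-Q^{\dag}Q)$, so the $(1,1)$ block of $H^{k}H^{C,\textcircled{\dag}}$ equals $T^{k}T^{*}\Lambda+T^{k-1}P(I-Q^{\dag}Q)P^{*}\Lambda=T^{k-1}\Lambda^{-1}\Lambda=T^{k-1}$ for \emph{every} $H\in\mathbb{C}^{n\times n}_{k}$. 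Hence $H^{k}H^{C,\textcircled{\dag}}=U\bigl[\begin{smallmatrix}T^{k-1}&0\\0&0\end{smallmatrix}\bigr]U^{*}=H^{k}H^{\textcircled{\dag}}$ unconditionally (equivalently: $H^{k}H^{\textcircled{\dag}}=H^{k-1}P_{H^{k}}$ since $HH^{\textcircled{\dag}}=P_{H^{k}}$, while the paper itself computes $H^{k}H^{C,\textcircled{\dag}}=H^{k-1}P_{H^{k}}$ in the proof of its final theorem). So condition (h) holds for \emph{all} matrices of index $k$ and cannot be equivalent to $P=0$; the ``two or three lines of block arithmetic ending in $P=0$'' you promise cannot exist for (h). Concretely, in Example \ref{ex1} one has $P=I_{3}\neq 0$, so $H$ is not a CCE-matrix, yet $H^{3}H^{C,\textcircled{\dag}}=H^{3}H^{\textcircled{\dag}}=\bigl[\begin{smallmatrix}I_{3}&0\\0&0\end{smallmatrix}\bigr]$. (The paper's own treatment of (h), dismissed as ``similar to $(g)\Rightarrow(a)$'', is wrong for the same reason; the defect lies in the statement itself, so it is not something your route could have repaired, but your blanket claim glosses over it.)

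The same identity also shows that your explanation of (e) points at the wrong block: in $H^{k+1}H^{C,\textcircled{\dag}}$ the $(1,1)$ block equals $T^{k+1}T^{*}\Lambda+T^{k}P(I-Q^{\dag}Q)P^{*}\Lambda=T^{k}$ automatically, so the $(1,1)$-block equation does not ``hold precisely when $P=0$''---it always holds. What makes (e) work is the $(1,2)$ block: $H^{k+1}H^{C,\textcircled{\dag}}$ has $0$ there while $H^{k}$ has $\widetilde{T}$, so (e) forces $\widetilde{T}=0$ and hence $P=0$ by Lemma \ref{lem2.3}; the same remark applies to (f) and (g), where the $\widetilde{T}$-entry carries all the information. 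With that correction, (e), (f), (g) go through as in the paper; only (h) is irreparable as stated and would have to be replaced (for instance by a commutation condition such as the $k$-CCE identity treated in the final section) or dropped.
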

\begin{proof}
According to Theorem \ref{thm5.2}, we get that $H$ is a CCE-inverse matrix if and only if $P=0$. If $P=0$, then $H^{C,\textcircled{\dag}}=H^{D}=H^{\dag ,D}=H^{\textcircled{\dag}}=H^{\textcircled{w}}=U\left[\begin{array}{cc}
T^{-1} & 0 \\
0 & 0\\
\end{array}
\right]U^{*}.$ Thus it can be easily seen that $(a)\Rightarrow(b)$, $(a)\Rightarrow(c)$, $(a)\Rightarrow(d)$, $(a)\Rightarrow(e)$, $(a)\Rightarrow(f)$, $(a)\Rightarrow(g)$, $(a)\Rightarrow(h)$ and $(a)\Rightarrow(i)$.

``$(b)\Rightarrow(a)$". If $H^{C,\textcircled{\dag}}=H^{D}$, by $(\ref{eq2.7})$ and $(\ref{eq4.2})$, we have
$$
\left[\begin{array}{cc}
T^{*}\Lambda  & 0 \\
(I_{n-t}-Q^{\dag}Q)P^{*}\Lambda & 0\\
\end{array}
\right]=\left[\begin{array}{cc}
T^{-1} & (T^{k+1})^{-1}\widetilde{T} \\
0 & 0\\
\end{array}
\right]
,$$
then, $(T^{k+1})^{-1}\widetilde{T}=0\Rightarrow\widetilde{T}=0\Rightarrow P=0$.

``$(c)\Rightarrow(a)$". If $H^{C,\textcircled{\dag}}=H^{\dag,D}$, it follows from $(\ref{eq4.2})$ and $(\ref{eq4.4})$ that $\widetilde{T}=0\Rightarrow P=0$.

``$(d)\Rightarrow(a)$". If $H^{C,\textcircled{\dag}}=H^{\textcircled{w}}$, due to $(\ref{eq4.2})$ and Theorem 3.1 of \cite{wc}, we obtain
$$
\left[\begin{array}{cc}
T^{*}\Lambda  & 0 \\
(I_{n-t}-Q^{\dag}Q)P^{*}\Lambda & 0\\
\end{array}
\right]=\left[\begin{array}{cc}
T^{-1} & T^{-2}P \\
0 & 0\\
\end{array}
\right]
,$$
which implies $P=0$.

``$(e)\Rightarrow(a)$". According to $H^{k+1}H^{C,\textcircled{\dag}}=H^{k}$, we deduce that
$$
\left[\begin{array}{cc}
T^{k+1}  & T\widetilde{T} \\
0 & 0\\
\end{array}
\right]\left[\begin{array}{cc}
T^{*}\Lambda & 0 \\
(I_{n-t}-Q^{\dag}Q)P^{*}\Lambda & 0\\
\end{array}
\right]
=\left[\begin{array}{cc}
T^{k}  & \widetilde{T} \\
0 & 0\\
\end{array}
\right].$$
Thus, $\widetilde{T}=0$ which lead to $P=0$.

``$(f)\Rightarrow(a)$". The proof is similar to that of $(e)\Rightarrow(a)$.

 ``$(g)\Rightarrow(a)$". Assume that $HH^{C,\textcircled{\dag}}=HH^{D}$. Then $P_{H^{k}}=HH^{D}$ and
 $$P_{H^{k}}=HH^{D}\Rightarrow U\left[\begin{array}{cc}
I_{t}  & 0 \\
0 & 0\\
\end{array}
\right]U^{*}=U\left[\begin{array}{cc}
T  & P \\
0 & 0\\
\end{array}
\right]\left[\begin{array}{cc}
T^{-1}  & (T^{k+1})^{-1}\widetilde{T} \\
0 & 0\\
\end{array}
\right]U^{*}.$$
Thus, we get that $T^{-k}\widetilde{T}=0\Rightarrow \widetilde{T}=0\Rightarrow P=0$.

``$(h)\Rightarrow(a)$". The proof is similar to that of $(g)\Rightarrow(a)$.

``$(i)\Rightarrow(a)$". If $[HH^{C,\textcircled{\dag}},H^{C,\textcircled{\dag}}H]=0$, it follows from the fact that $HH^{C,\textcircled{\dag}}=P_{H^{k}}$ and $[HH^{C,\textcircled{\dag}},H^{C,\textcircled{\dag}}H]=0$ that
$$U\left[\begin{array}{cc}
I_{t}  & 0 \\
0 & 0\\
\end{array}
\right]\left[\begin{array}{cc}
T^{*}\Lambda  & 0 \\
(I_{n-t}-Q^{\dag}Q)P^{*}\Lambda & 0\\
\end{array}
\right]\left[\begin{array}{cc}
T  & P \\
0 & Q\\
\end{array}
\right]U^{*}
$$
$$
=U\left[\begin{array}{cc}
T^{*}\Lambda  & 0 \\
(I_{n-t}-Q^{\dag}Q)P^{*}\Lambda & 0\\
\end{array}
\right]\left[\begin{array}{cc}
T  & P \\
0 & Q\\
\end{array}
\right]\left[\begin{array}{cc}
I_{t}  & 0 \\
0 & 0\\
\end{array}
\right]U^{*}.$$
Thus,
$$
\left[\begin{array}{cc}
T^{*}\Lambda T  & T^{*}\Lambda P \\
0 & 0\\
\end{array}
\right]
=\left[\begin{array}{cc}
T^{*}\Lambda T  & 0 \\
(I_{n-t}-Q^{\dag}Q)P^{*}\Lambda T & 0\\
\end{array}
\right],$$
which implies that $P=0.$
\end{proof}

In [9], the authors introduced the following classes of matrices $\mathbb{C}^{k-\mathrm{EP}}_{n}$, $\mathbb{C}^{k,\textcircled{\dag}}_{n}$, $\mathbb{C}^{k,D\dag}_{n}$, $\mathbb{C}^{k,\dag D}_{n}$, $\mathbb{C}^{k,C\dag}_{n}$, $\mathbb{C}^{k,\textcircled{w}}_{n}$. Similarly,  we establish the concept of $k-$CCE-inverse matrix and propose the characterizations of $k-$CCE inverse matrix as follows.
\begin{definition}
Let $H\in\mathbb{C}^{n\times n}_{k}$. We say that $H$ is a k-CCE inverse matrix if $H\in\mathbb{C}^{k,C\textcircled{\dag}}_{n}$ and define the set of $k$-CCE inverse matrices by
$$\mathbb{C}^{k,C\textcircled{\dag}}_{n}=\{H\mid H\in\mathbb{C}^{n\times n}_{k}, H^{k}H^{C,\textcircled{\dag}}=H^{C,\textcircled{\dag}}H^{k}\}.$$
\end{definition}

Now, we give the characterizations of $k-$CCE-inverse matrices and prove that the $k-$CCE inverse matrix is the same as the CCE-inverse matrix.
\begin{theorem}
Let $H\in\mathbb{C}^{n\times n}_{k}$ have the form of $(\ref{eq2.5})$. Then the following assertions are equivalent:
\begin{itemize}
\item[(a)] $H\in\mathbb{C}^{k,C\textcircled{\dag}}$;
\item[(b)] $H$ is a CCE-inverse matrix;
\item[(c)] $H^{t}H^{C,\textcircled{\dag}}=H^{C,\textcircled{\dag}}H^{t}(t\geq k)$;
\item[(d)] $H^{m}_{1}H^{C,\textcircled{\dag}}=H^{C,\textcircled{\dag}}H^{m}_{1}(m\in \mathbb{N})$.
\end{itemize}
\end{theorem}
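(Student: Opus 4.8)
The plan is to show the equivalences by working entirely inside the core-EP decomposition $(\ref{eq2.5})$, exactly as in the preceding theorems, and to reduce everything to the single condition $P=0$, which by Theorem \ref{thm5.2}(d) characterizes CCE-inverse matrices. The central computational fact I would establish first is the block form of $H^t$ for $t\ge k$: since $Q$ is nilpotent of index $k$, one has
\begin{equation*}
H^{t}=U\left[\begin{array}{cc} T^{t} & \widetilde{T}_{t} \\ 0 & 0 \end{array}\right]U^{*},\qquad t\ge k,
\end{equation*}
for a suitable block $\widetilde{T}_{t}$ (with $\widetilde{T}_{k}=\widetilde{T}$ in the notation of Lemma \ref{lem2.3}). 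Combining this with the canonical form $(\ref{eq4.2})$ of $H^{C,\textcircled{\dag}}$ and carrying out the two block products $H^{t}H^{C,\textcircled{\dag}}$ and $H^{C,\textcircled{\dag}}H^{t}$, the equality $H^{t}H^{C,\textcircled{\dag}}=H^{C,\textcircled{\dag}}H^{t}$ collapses (using that $T^{*}\Lambda$ is nonsingular and that the $(2,1)$ block of the right-hand product forces $(I_{n-t}-Q^{\dag}Q)P^{*}\Lambda T^{t}=0$) to $P=0$; the converse direction is immediate since $P=0$ makes both $H^{t}$ and $H^{C,\textcircled{\dag}}$ block-diagonal with only a $(1,1)$ block.

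For the logical layout I would prove $(a)\Rightarrow(d)$, then $(d)\Rightarrow(c)$, then $(c)\Rightarrow(b)$, then $(b)\Rightarrow(a)$, which is the most economical cycle. The implication $(b)\Rightarrow(a)$ is essentially trivial: if $H$ is a CCE-inverse matrix then $P=0$, so $H^{k}$ and $H^{C,\textcircled{\dag}}$ commute by the same block-diagonal observation, giving $H\in\mathbb{C}^{k,C\textcircled{\dag}}$. For $(a)\Rightarrow(d)$ I would note that $H\in\mathbb{C}^{k,C\textcircled{\dag}}$ gives $P=0$ by the block computation above (this is the case $t=k$), and once $P=0$ we have $H_{1}=U\,\mathrm{diag}(T,0)\,U^{*}$ and $H^{C,\textcircled{\dag}}=U\,\mathrm{diag}(T^{-1},0)\,U^{*}$, so $H_{1}^{m}H^{C,\textcircled{\dag}}=U\,\mathrm{diag}(T^{m-1},0)\,U^{*}=H^{C,\textcircled{\dag}}H_{1}^{m}$ for every $m\in\mathbb{N}$. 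The steps $(d)\Rightarrow(c)$ and $(c)\Rightarrow(b)$ both again reduce to extracting $P=0$: for $(d)$, take $m=1$ and use $H_{1}H^{C,\textcircled{\dag}}=H^{C,\textcircled{\dag}}H_{1}$, which is condition (b) of Theorem \ref{thm5.2}, hence $P=0$, hence (again by the block form) $H^{t}$ commutes with $H^{C,\textcircled{\dag}}$ for all $t\ge k$; for $(c)$, specialize to $t=k$ and invoke the defining equivalence $(a)\Leftrightarrow(d)$ of Theorem \ref{thm5.2}.

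The only point that needs a little care—and the main (mild) obstacle—is the bookkeeping for $\widetilde{T}_{t}$ when $t>k$, i.e. verifying that the off-diagonal block of $H^{t}$ plays no role: one must check that in the product $H^{C,\textcircled{\dag}}H^{t}$ the block $\widetilde{T}_{t}$ is killed from the left by the second block-column of $H^{C,\textcircled{\dag}}$ (which is zero), while in $H^{t}H^{C,\textcircled{\dag}}$ the block $\widetilde{T}_{t}$ multiplies the $(2,1)$ entry $(I_{n-t}-Q^{\dag}Q)P^{*}\Lambda$, so that the $(1,1)$ and $(2,1)$ blocks of the commutator equation are exactly $T^{t}T^{*}\Lambda+\widetilde{T}_{t}(I_{n-t}-Q^{\dag}Q)P^{*}\Lambda=T^{*}\Lambda T^{t}$ and $0=(I_{n-t}-Q^{\dag}Q)P^{*}\Lambda T^{t}$; the second of these, together with nonsingularity of $\Lambda T^{t}$, yields $(I_{n-t}-Q^{\dag}Q)P^{*}=0$, i.e. $P=PQ^{\dag}Q$, and then feeding this back into the first block equation forces $\widetilde{T}_{t}=0$ and hence $P=0$. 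Everything else is a routine matrix multiplication in the $2\times 2$ block form, parallel to the proofs already given in this section.
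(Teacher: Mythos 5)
Your overall strategy---reduce each condition to $P=0$ inside the core-EP decomposition and then invoke Theorem \ref{thm5.2}---is exactly the paper's, and the cycle $(a)\Rightarrow(d)\Rightarrow(c)\Rightarrow(b)\Rightarrow(a)$ is fine in principle. However, the block computation you yourself flag as the point needing care is carried out incorrectly, and as written it does not establish the key implication. In the product $H^{C,\textcircled{\dag}}H^{t}$ the block $\widetilde{T}_{t}$ is \emph{not} killed: the $(1,2)$ and $(2,2)$ blocks of $H^{C,\textcircled{\dag}}H^{t}$ are $T^{*}\Lambda\widetilde{T}_{t}$ and $(I_{n-t}-Q^{\dag}Q)P^{*}\Lambda\widetilde{T}_{t}$, whereas the corresponding blocks of $H^{t}H^{C,\textcircled{\dag}}$ are zero. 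The two equations you do keep---the $(1,1)$ and $(2,1)$ ones---yield only $(I_{n-t}-Q^{\dag}Q)P^{*}=0$, i.e.\ $P=PQ^{\dag}Q$; substituting this back makes $\Lambda=(TT^{*})^{-1}$, so the $(1,1)$ equation reduces to $T^{t}T^{-1}=T^{-1}T^{t}$, which is an identity and carries no information. In particular it does not force $\widetilde{T}_{t}=0$, contrary to your ``feeding this back'' step, and $P=PQ^{\dag}Q$ is strictly weaker than $P=0$ (it characterizes $\mathbb{C}^{k,\dag D}_{n}$, not $\mathbb{C}^{i-\mathrm{EP}}_{n}$), so the argument stops short of the conclusion.

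The repair is the block you discarded: equating the $(1,2)$ blocks gives $T^{*}\Lambda\widetilde{T}_{t}=0$, hence $\widetilde{T}_{t}=0$ since $T^{*}\Lambda$ is nonsingular; because $\widetilde{T}_{t}=T^{t-k}\widetilde{T}$ for $t\geq k$, Lemma \ref{lem2.3} ($\widetilde{T}=0\Leftrightarrow P=0$) then gives $P=0$, and the $(2,1)$ equation becomes redundant. With that correction the rest of your cycle goes through: $(d)\Rightarrow(c)$ via $m=1$ and Theorem \ref{thm5.2}(b) is fine, while in $(c)\Rightarrow(b)$ you should conclude $P=0$ from the corrected $t=k$ computation and then apply Theorem \ref{thm5.2} $(d)\Rightarrow(a)$, since specializing $(c)$ to $t=k$ gives commutation with $H^{k}$, not with $H$, so Theorem \ref{thm5.2}(a) is not directly available. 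For comparison, the paper's proof uses the same decomposition but first simplifies $H^{k}H^{C,\textcircled{\dag}}=H^{k-1}P_{H^{k}}$ and $H^{C,\textcircled{\dag}}H^{k}=H^{\dag}H^{k}$, which makes the block comparison (and the appearance of $\widetilde{T}$) immediate.
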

\begin{proof}
``$(a)\Leftrightarrow(b)$". By $(\ref{eq2.5})$, we have
$$H^{k}=U\left[\begin{array}{cc}
T^{k}  & \widetilde{T} \\
0 & 0\\
\end{array}
\right]U^{*},$$
where $\widetilde{T}=T^{k-1}P+T^{k-2}PQ+\cdots+TPQ^{k-2}+PQ^{k-1}$. Observe that
$$H^{C,\textcircled{\dag}}=U\left[\begin{array}{cc}
T^{*}\Lambda  & 0 \\
(I_{n-t}-Q^{\dag}Q)P^{*}\Lambda & 0\\
\end{array}
\right]U^{*},
$$
$$
H^{k}H^{C,\textcircled{\dag}}=H^{k}H^{\dag}P_{H^{k}}=H^{k-1}P_{H^{k}}=U\left[\begin{array}{cc}
T^{k-1}  & 0 \\
0 & 0\\
\end{array}
\right]U^{*}
,$$
$$
H^{C,\textcircled{\dag}}H^{k}=H^{\dag}P_{H^{k}}H^{k}=H^{\dag}H^{k}=
U\left[\begin{array}{cc}
T^{*}\Lambda T^{k}  & T^{*}\Lambda\widetilde{T} \\
(I_{n-t}-Q^{\dag}Q)P^{*}\Lambda T^{k}  & (I_{n-t}-Q^{\dag}Q)P^{*}\Lambda\widetilde{T}\\
\end{array}
\right]U^{*}.
$$
Hence, $H^{k}H^{C,\textcircled{\dag}}=H^{C,\textcircled{\dag}}H^{k}$ if and only if $\widetilde{T}=0$ and $(I_{n-t}-Q^{\dag}Q)P^{*}=0$, that is to say that $P=0$.

``$(b)\Leftrightarrow(c)$". The proof is analogous to that of $(a)\Leftrightarrow(b)$.

``$(b)\Leftrightarrow(d)$". Noting that $H_{1}=HH^{\textcircled{\dag}}H=U\left[\begin{array}{cc}
T  & P \\
0 & 0\\
\end{array}
\right]U^{*},$ we derive that
$$H^{m}_{1}=U\left[\begin{array}{cc}
T^{m}  & T^{m-1}P \\
0 & 0\\
\end{array}
\right]U^{*}, \ \ \ H_{1}^{m}H^{C,\textcircled{\dag}}=U\left[\begin{array}{cc}
T^{m}T^{*}\Lambda+T^{m-1}P(I_{n-t}-Q^{\dag}Q)P^{*}\Lambda  &  0\\
0 & 0\\
\end{array}
\right]U^{*},$$
$$H^{C,\textcircled{\dag}}H_{1}^{m}=U\left[\begin{array}{cc}
T^{*}\Lambda  & 0 \\
(I_{n-t}-Q^{\dag}Q)P^{*}\Lambda & 0\\
\end{array}
\right]\left[\begin{array}{cc}
T^{m}  & T^{m-1}P \\
0 & 0\\
\end{array}
\right]U^{*}\;\;\;\;\;\;\;\;\;\;\;\;\;\;\;\;\;\;\;\;\;\ $$
$$=U\left[\begin{array}{cc}
T^{*}\Lambda T^{m}  & T^{*}\Lambda T^{m-1}P \\
(I_{n-t}-Q^{\dag}Q)P^{*}\Lambda & (I_{n-t}-Q^{\dag}Q)P^{*}\Lambda T^{m-1}P\\
\end{array}
\right]U^{*},$$
where $\Lambda=(TT^{*}+P(I_{n-t}-Q^{\dag}Q)P^{*})^{-1}$, $t=r(H^{k}).$ Therefore, $H^{C,\textcircled{\dag}}H_{1}^{m}=H_{1}^{m}H^{C,\textcircled{\dag}}$ if and only if $P=0$.
\end{proof}

\section{Acknowledgements}\numberwithin{equation}{section}

The authors would like to appreciate Pro. Dragana of Cihu scholars in Hubei Normal university for her precious comments.


\end{document}